\newcommand{\ca}{\mathcal{C}}
\newcommand{\da}{\mathcal{D}}
\newcommand{\Z}{\mathbb{Z}}
\newcommand{\SL }{{\rm SL}}
\newcommand{\C}{\mathbb{C}}
\newcommand{\N}{\mathbb{N}}
\newcommand{\GCD}{{\rm GCD}}
\newcommand{\End}{{\rm End}}
\newcommand{\Spec}{{\rm Spec}}
\newcommand{\Irr}{{\rm Irr}}
\newcommand{\Hom}{{\rm Hom}}
\def\mymathhyphen{{\hbox{-}}}
\newcommand{\TODO}[1]
\theoremstyle{plain} 
\newtheorem{theo}{Theorem}[section]
\newtheorem{cor}[theo]{Corollary}
\newtheorem{prop}[theo]{Proposition}
\theoremstyle{definition} 
\newtheorem{defi}[theo]{Definition}
\newtheorem{rem}[theo]{Remark}
\newtheorem*{rem*}{Remark}
\begin{document}

\keywords{tensor categories; quantum groups; planar algebras; representation theory; braid group; Frobenius-Schur indicators; diagrammatic algebra}

\begin{abstract}
We give formulae for the multiplicities of eigenvalues of generalized rotation operators in terms of generalized Frobenius-Schur indicators in a semisimple spherical tensor category $\mathcal{C}$.  In particular, this implies that the entire collection of rotation eigenvalues for a fusion category can be computed from the fusion rules and the traces of rotation at finitely many tensor powers.  We also establish a rigidity property for FS indicators of fusion categories with a given fusion ring via Jones's theory of planar algebras. If $\mathcal{C}$ is also braided, these formulae yield the multiplicities of eigenvalues for a large class of braids in the associated braid group representations.  When $\mathcal{C}$ is modular, this allows one to determine the eigenvalues  and multiplicities of braids in terms of just the $S$ and $T$ matrices. 
\end{abstract}

\title{Eigenvalues of rotations and braids in spherical fusion categories}
\author{Daniel Barter, Corey Jones and Henry Tucker}
\date{\today}
\maketitle

\section{Introduction} \label{sec:Introduction}

Our understanding of symmetry in a diverse range of topology, representation theory, and mathematical physics relies on first understanding {\it rotations}. Obvious examples appear in the study of winding numbers and knot polynomials. The actions of rotation operators played an important role in the construction and obstruction theory of Vaughan Jones's {\it planar algebras} \cite{JonPA}, which provide a diagrammatic axiomatization of the representation theory of Murray-von Neumann subfactors.  In particular, the small index subfactor classification program has been advanced significantly by understanding rotation eigenvalues in relation to other data; for an overview, see \cite{Jonann}, \cite{ind50}. 

From a purely algebraic standpoint rotation can be realized as a tensor flip map:
\[
v_1 \otimes v_2 \otimes \cdots \otimes v_n \mapsto v_n \otimes v_1 \otimes v_2 \otimes \cdots \otimes v_{n-1}
\]
Kashina, Sommerh\"auser, and Zhu \cite{KYS} showed that the traces of rotation operators on tensor powers of a given representation of a semisimple Hopf algebra yield the {\it higher Frobenius-Schur (FS) indicators} for the representation. (The definition of these in the Hopf algebra settings generalizes the classical Frobenius-Schur indicators defined for complex representations of finite groups; see \cite{lm}.)

Ng and Schauenberg used a synthesis of the subfactor and Hopf algebra approaches to generalize much of this theory to the setting of {\it spherical fusion categories} \cite{NS1}\cite{NS3}. The advantages of this setting are twofold. First, generality is sufficient to unify the representation theories for quantum groups \cite{ResTur91}, subfactors \cite{Mu1}, conformal nets, and vertex operator algebras \cite{KLM} along with the classification of fully extended 3-dimensional topological field theories \cite{DSS}. Second, restrictions are sufficient to allow extensive use of graphical methods for morphisms.

The categorical FS indicators have proven to be a powerful invariant of fusion categories as well as their braided counterparts. They provide the main technical ingredient in the recent proofs showing the representation of the modular group $SL_{2}(\Z)$ coming from the Drinfel'd center of a spherical fusion category (that is, the canonically associated braided fusion category) has a congruence subgroup as its kernel \cite{SZ}\cite{NS2} and showing rank finiteness for {\it modular} categories \cite{BNRW}. Crucially, Ng and Schauenburg provide a method for computing FS indicators of a spherical fusion category in terms of the modular data of its Drinfel'd center \cite{NS3}. The preceding is discussed in greater detail in section 2.

We have seen that there is a great deal of theory regarding the \textit{traces} of rotation operators and how to compute them from basic categorical data, however their actual eigenvalues (and corresponding multiplicities) have been relatively inaccessible except in special cases.  In this note we will give formulae for the multiplicities of the eigenvalues of (generalized) rotation operators in terms of the (generalized) FS indicators, and we will present two applications. 

In section 3 we establish the eigenvalue multiplicities using Galois actions on cyclotomic fields and properties of (generalized) FS indicators to derive the traces of the powers of rotations.  The finite Fourier transform yields the desired formulae in Theorem \ref{thm:main_theorem}.  However, it is desirable to be able to compute the rotation eigenvalues from as little information as possible since there are many situations where the full modular data for the center is not easily accessible. For example, in \cite{GM}, Gannon and Morrison describe a procedure for computing possible modular data for the Drinfel'd centers $Z(\ca)$ of categorifications $\ca$ of a given fusion ring. Their algorithm effectively produces finitely many possibilities for the forgetful functor $Z(\ca)\to \ca$ and the $T$-matrix of $Z(\ca)$, but finding the $S$-matrix requires solving a generally large system of quadratics which may or may not be possible. In Theorem \ref{moreforless} we show how to compute the (generalized) Frobenius-Schur indicators from just the $T$-matrix and the forgetful functor, hence our formulae let us determine the set of rotation eigenvalues for a given categorification from this information alone.

The FS indicator data has been an important invariant for classifying {\it categorifications} $\ca$ of a given fusion ring $R$. This is a fusion category whose tensor product structure is isomorphic to $R$ on the level of rings. At the end of section 3 we establish a rigidity property for the FS indicators of categorifications of a given fusion ring. Rigidity phenomenon describe situations where one gets ``more for less'':  some {\it a priori} weaker invariants being equal implies some stronger invariants are equal.  FS indicators are invariants of spherical fusion categories; we say that a fusion ring has {\it FS indicator rigidity} if each of its categorifications has a unique sequence of FS indicators. (If $\ca$ is a fusion category, the sequence is periodic, so this is a finite set of data.)  For example, the Tambara-Yamagami fusion rings have FS indicator rigidity \cite{BJ}.  Richard Ng has asked whether {\it singly-generated} fusion rings have FS indicator rigidity.  We observe a negative answer to this question: the quadratic  Haagerup fusion ring with $G=\mathbb{Z}_{3}$ admits two inequivalent, unitary (hence spherical) categorifications with the same FS indicator data.  

However, we are able to show a weaker rigidity property: if two categorifications of a given singly-generated fusion ring have the same higher Frobenius-Schur indicator data then their associated {\it planar algebras} are isomorphic {\it as objects} in the representation category of Jones's annular Temperley-Lieb algebroid \cite{Jonann}. As we shall see this category is related closely to the Drinfel'd center of the category of representations of the Drinfel'd-Jimbo quantum group for $\mathfrak{sl}_2$. We establish this property of fusion rings in Theorem \ref{annFSrigid} and name it {\it annular FS indicator rigidity}. A planar algebra is a {\it commutative algebra object} in the annular representation category.  Given an object $v\in \ca$ one can construct a planar algebra $P^{v}$. If $v\otimes \overline{v}$ tensor generates $\ca$ one can recover the category $\ca$ from the planar algebra. Suppose $R$ is a fusion ring, and $\mathcal{C}$ and $\mathcal{D}$ are spherical categorifications of $R$.  Suppose $v\in \ca$ and $w\in \mathcal{D}$ are identified under the equivalence of fusion rings. Then the annular FS rigidity property is stated as follows: if the FS indicator data for $\ca$ and $\mathcal{D}$ are the same then $P^{v}\cong P^{w}$ as objects but {\it not necessarily} as commutative algebras. 

One can only recover the category $\ca$ from $P^v$ as an {\it algebra} object, hence the failure of full FS indicator rigidity can be expressed as {\it non-uniqueness} of the commutative algebra structure on a given object in the annular representation category. This also suggests a possible approach to determining which classes of fusion rings exhibit the stronger property of {\it full} FS indicator rigidity.  For example, it is possible that from general considerations one could determine that certain objects in the annular representation category admit \textit{at most one} commutative algebra structure, which would imply FS indicator rigidity for any underlying fusion ring.

Finally, in section 4 we use our formulae to compute eigenvalues of braid group representations associated to {\it braided} spherical (= {\it ribbon}) fusion categories in terms of (generalized) FS indicator data. In particular, when the braiding is non-degenerate (i.e. the category is {\it modular}) our formulae can be written in terms of the modular data, i.e. the $S$ and $T$ matrices from the braiding and spherical structures. 

Given an object $v$ in a ribbon fusion category $\ca$, we have representations of the braid groups $B_n$:
\[
\pi_{n, v}: B_{n} \rightarrow {\rm End}_\ca(v^{\otimes n})
\]
Using our formulae in Proposition \ref{thm:main_theorem} and Proposition \ref{moreforless} we can compute the eigenvalues under these representations for a large class of braids , called the {\it Jucys-Murphy elements} of the braid group $B_n$:
\[
\begin{tikzpicture}[yscale=-1,scale=0.0333,baseline={([yshift=-.5ex]current bounding box.center)}]
\begin{scope}[shift={(0.00mm,719.29mm)}]
% path id='path4157'
% path spec='m 157.13778,-488.74702 5.71912,930.21594'
\draw [fill=none,draw=black] (157.14mm,-488.75mm)
-- ++(5.72mm,930.22mm)
;
% path id='path4159'
% path spec='m 162.85449,568.51443 0,277.7739'
\draw [fill=none,draw=black] (162.85mm,568.51mm)
-- ++(0.00mm,277.77mm)
;
% path id='path4161'
% path spec='m 357.13726,-488.74642 5.72164,826.85706'
\draw [fill=none,draw=black] (357.14mm,-488.75mm)
-- ++(5.72mm,826.86mm)
;
% path id='path4163'
% path spec='m 362.85523,480.22973 0,366.0586'
\draw [fill=none,draw=black] (362.86mm,480.23mm)
-- ++(0.00mm,366.06mm)
;
% path id='path4165'
% path spec='m 557.13676,-488.74565 5.72412,747.18398'
\draw [fill=none,draw=black] (557.14mm,-488.75mm)
-- ++(5.72mm,747.18mm)
;
% path id='path4167'
% path spec='m 562.85597,407.01804 0,439.27021'
\draw [fill=none,draw=black] (562.86mm,407.02mm)
-- ++(0.00mm,439.27mm)
;
% path id='path4169'
% path spec='m 787.13627,-488.74507 5.7268,656.74485'
\draw [fill=none,draw=black] (787.14mm,-488.75mm)
-- ++(5.73mm,656.74mm)
;
% path id='path4171'
% path spec='m 792.85682,293.32674 0,552.96151'
\draw [fill=none,draw=black] (792.86mm,293.33mm)
-- ++(0.00mm,552.96mm)
;
% path id='path4173'
% path spec='m 1043.1362,-488.7447 5.7288,589.99227'
\draw [fill=none,draw=black] (1043.14mm,-488.74mm)
-- ++(5.73mm,589.99mm)
;
% path id='path4175'
% path spec='m 1048.8577,216.33094 0,629.95731'
\draw [fill=none,draw=black] (1048.86mm,216.33mm)
-- ++(0.00mm,629.96mm)
;
% path id='path4177'
% path spec='m 1303.1354,-488.74423 5.7324,484.4803609'
\draw [fill=none,draw=black] (1303.14mm,-488.74mm)
-- ++(5.73mm,484.48mm)
;
% path id='path4179'
% path spec='m 1308.8587,118.7576 0,727.53065'
\draw [fill=none,draw=black] (1308.86mm,118.76mm)
-- ++(0.00mm,727.53mm)
;
% path id='path4181'
% path spec='m 1541.1353,-488.74463 5.7342,410.683604'
\draw [fill=none,draw=black] (1541.14mm,-488.74mm)
-- ++(5.73mm,410.68mm)
;
% path id='path4183'
% path spec='m 1546.8595,34.563042 0,811.725208'
\draw [fill=none,draw=black] (1546.86mm,34.56mm)
-- ++(0.00mm,811.73mm)
;
% path id='path4189'
% path spec='M 471.42707,839.82838 C 483.55489,779.54647 446.5715,745.8264 393.56963,739.70062'
\draw [fill=none,draw=black] (471.43mm,839.83mm)
%%%% Warning: check controls
.. controls (483.55mm,779.55mm) and (446.57mm,745.83mm) .. (393.57mm,739.70mm)
;
% path id='path4191'
% path spec='M 329.28368,732.16414 C 278.4176,722.78468 203.99389,699.52411 178.56884,694.48166'
\draw [fill=none,draw=black] (329.28mm,732.16mm)
%%%% Warning: check controls
.. controls (278.42mm,722.78mm) and (203.99mm,699.52mm) .. (178.57mm,694.48mm)
;
% path id='path4193'
% path spec='M 122.85434,682.63855 C -394.46463,551.74963 1530.1188,-18.552323 1645.7169,-55.938244'
\draw [fill=none,draw=black] (122.85mm,682.64mm)
%%%% Warning: check controls
.. controls (-394.46mm,551.75mm) and (1530.12mm,-18.55mm) .. (1645.72mm,-55.94mm)
;
% path id='path4195'
% path spec='m 1645.3037,-56.109496 c 131.5411,-40.409547 95.5148,-131.633944 -69.9831,-157.063224'
\draw [fill=none,draw=black] (1645.30mm,-56.11mm)
.. controls ++(131.54mm,-40.41mm) and ++(165.50mm,25.43mm) .. ++(-69.98mm,-157.06mm)
;
% path id='path4197'
% path spec='m 1512.8595,-222.81795 c -63.1988,-12.67312 -128.0777,-40.77341 -181.4293,-60.292'
\draw [fill=none,draw=black] (1512.86mm,-222.82mm)
.. controls ++(-63.20mm,-12.67mm) and ++(53.35mm,19.52mm) .. ++(-181.43mm,-60.29mm)
;
% path id='path4199'
% path spec='m 1281.4301,-294.95302 c -111.9077,-42.90384 -110.8492,-120.83014 -132.8577,-191.6424'
\draw [fill=none,draw=black] (1281.43mm,-294.95mm)
.. controls ++(-111.91mm,-42.90mm) and ++(22.01mm,70.81mm) .. ++(-132.86mm,-191.64mm)
;
% path id='path4209'
% path spec='m 1246.3801,-501.2323 c 19.5892,-88.31933 160.9373,2.08305 170.5096,-101.05547'
\draw [fill=none,draw=black] (1246.38mm,-501.23mm)
.. controls ++(19.59mm,-88.32mm) and ++(-9.57mm,103.14mm) .. ++(170.51mm,-101.06mm)
;
% path id='path4211'
% path spec='m 1587.1594,-501.2323 c -19.5893,-88.31933 -160.9372,2.08305 -170.5096,-101.05547'
\draw [fill=none,draw=black] (1587.16mm,-501.23mm)
.. controls ++(-19.59mm,-88.32mm) and ++(9.57mm,103.14mm) .. ++(-170.51mm,-101.06mm)
;
% path id='path4213'
% path spec='M 91.792082,859.80208 C 110.39781,948.12984 244.64912,857.71883 253.74088,960.86719'
\draw [fill=none,draw=black] (91.79mm,859.80mm)
%%%% Warning: check controls
.. controls (110.40mm,948.13mm) and (244.65mm,857.72mm) .. (253.74mm,960.87mm)
;
% path id='path4215'
% path spec='M 415.46179,859.80208 C 396.85605,948.12984 262.60474,857.71883 253.51298,960.86719'
\draw [fill=none,draw=black] (415.46mm,859.80mm)
%%%% Warning: check controls
.. controls (396.86mm,948.13mm) and (262.60mm,857.72mm) .. (253.51mm,960.87mm)
;
\node [black] at (1400.14mm,-653.35mm) { $m$ };
\node [black] at (245.71mm,1029.51mm) { $l$ };
\end{scope}
\end{tikzpicture}
\]

%Identify $\ca \boxtimes \tilde{\ca}$ with its image under the canonical functor $G:\ca \boxtimes \tilde{\ca} \to Z(\ca)$. The braid group acts on the multiplicity space $\Hom_\ca(W,V^{\otimes n})$ and we call the corresponding representation $\pi_{n,V}^W$.

This class of braids includes the braid group generators and the periodic braids.  As demonstrated in \cite{TW,RT}, the eigenvalues of the braid generator may contain a large amount of information about the associated representation in low dimensions.   Indeed, small representations of $B_3$ are essentially determined by the spectrum of the braid group generator \cite{TW}. In many cases related to quantum groups, the spectrum for the braiding can be worked out using explicit forms of the braid matrix, but, until now, there has been no general procedure for finding which square roots occur and with what multiplicity.  We also anticipate the utility of our formulae for understanding Drinfel'd centers of ``exotic'' fusion categories which don't appear to come from quantum groups in an obvious way; see \cite{HRW}.

\subsection*{Remark} Our results only require $\ca$ to be a spherical semisimple tensor category, that is with possibly infinitely many isomorphism classes of simple objects. We primarily restrict our attention to fusion categories (finitely many simple objects) since in this case the sequence of higher FS indicators is periodic, but we have been careful to prove our results in a way that generalizes easily to the non-fusion case.

%%%%%%%%%%%%%%%%%%%%%%%%%%%%%%%%%%%%%%%%%%%%%%%%%

\subsection*{Acknowledgments}   The authors wish to thank Vaughan Jones for the initial question of whether rotation eigenvalues can be obtained from FS indicators. The second and third author wish to thank the Hausdorff Research Institute for Mathematics (HIM) for their kind hospitality during the beginning stages of this project. We also thank the organizers of the HIM Von Neumann Algebras Trimester Program for their invitations, with a special thanks to contact organizer Dietmar Bisch. All three authors also thank Scott Morrison, Susan Montgomery, and Richard Ng for many useful conversations. We also extend thanks to the referee for many helpful suggestions. The second author was supported by Discovery Projects ``Subfactors and symmetries'' DP140100732 and ``Low dimensional categories'' DP160103479 from the Australian Research Council.
%%%%%%%%%%%%%%%%%%%%%%%%%%%%%%%%%%%%%%%%%%%%

\section{Preliminaries} \label{sec:Preliminaries}

\subsection{Fusion categories} In this note {\bf tensor category} means a locally finite $k$-linear abelian monoidal category $(\ca, \otimes, \mathbf{1}, \alpha, \lambda, \rho)$ where the monoidal structure $\otimes$ is bilinear on morphisms and the unit object has $\End_\ca(\mathbf{1})\cong k$. The local finiteness condition requires objects to have finite length and morphism spaces to be finite dimensional. Refer to \cite{ENOG} for these definitions and other background on the theory of tensor categories. 

We assume, for convenience, that our tensor categories are strict, which can be done without loss of generality by Mac Lane's Strictness Theorem. With this assumption we may take advantage of {\it graphical calculus} for morphisms. 

\begin{rem*} Our graphical calculus convention is that {\it diagrams are read bottom to top}, the opposite convention to \cite{NS1,NS2}, but the same as \cite{Mu1,Mu2}.
\end{rem*}

\begin{defi}
A {\bf fusion category} is a {\it finite} tensor category $(\ca, \otimes, \mathbf{1}, \alpha, \lambda, \rho)$ that is also:
\begin{itemize}
    \item {\it semi-simple} with set of simple objects $\Irr(\ca)$ such that $\mathbf{1}\in \Irr(\ca)$, and
    \item {\it rigid}, i.e. objects $x\in\ca$ have (right \& left) {\it duals} $\overline{x}\in \ca$
\end{itemize}
({\it Note:} the finiteness condition implies that $|\Irr(\ca)|<\infty$)
\end{defi}

The first important invariant of fusion categories is the {\bf fusion ring}, which is the same as the Grothendieck ring $K_0(\ca)$. This is the $\mathbb{Z}$-based ring with basis given by the set $\text{Irr}(\ca)$ of isomorphism classes of simple objects in $\ca$, addition by the direct sum, and multiplication by the direct product. If $a,b,c \in \text{Irr}(\ca)$ then the structure coefficients are the integers $N^{c}_{a,b}:= \dim \Hom_\ca (a\otimes b, c)$. In general, we use the notation $N^{b}_{a_1, \dots, a_n}:= \dim \Hom_\ca(a_1\otimes \dots a_{n}, b)$.  These larger multiplicities can be computed from the fusion rule in the following way: 
\[N^{d}_{a,b,c}=\sum_{e\in \text{Irr}(\ca)} N^{d}_{a,e}N^{e}_{b,c}.\]
We also use the notation $a^{n}:=a\otimes \dots \otimes a$ so that, for example, $N^{d}_{a,a,a}=N^{d}_{a^{3}}$. These numbers depend on the objects only up to isomorphism. Given a fusion ring $R$ we call a fusion category $\ca$ a {\bf categorification} of $R$ if $K_0(\ca)\cong R$.

\subsection{Pivotal structures}
In this article we consider {\it spherical} fusion categories; these come with a well-defined trace on endomorphisms of objects. A {\bf pivotal structure} on a rigid tensor category is a choice of natural monoidal isomorphism from the double dual endofunctor to the identity endofunctor. It was shown in \cite{NS3} that we can always choose our pivotal structure to be \textit{strictly pivotal} in the sense that we can pick our duality functor so that the natural isomorphism from the double dual functor to the identity functor is the identity. Such a choice will be more conducive to graphical methods; see the following discussion.

Recall that, by rigidity, for each object $a\in \ca$ we have a {\it left} dual object $\overline{a}\in \ca$ with morphisms 

\[
{\rm coev}_{a}\in \Hom_\ca(\mathbb{1}, a\otimes \overline{a}) \quad \textrm{and} \quad {\rm ev}_{a}\in \Hom_\ca( \overline{a}\otimes a, \mathbb{1})
\]
represented by cups and caps, respectively. These satisfy the duality (or {\it zig-zag}) equations; see, for example, \cite{Mu1} for duality in graphical calculus. Similarly, $\overline{a}\in \ca$ has a left dual $\overline{\overline{a}}$ with morphisms:
\[
{\rm coev}_{\overline{a}}\in\Hom_ \ca(\mathbb{1}, \overline{a}\otimes \overline{\overline{a}}) \quad \textrm{and} \quad {\rm ev}_{\overline{a}}\in \Hom_\ca(\overline{\overline{a}}\otimes \overline{a},  \mathbb{1})
\]
Using the pivotal structure, these morphisms make $\overline{a}$ also a {\it right} dual of $a$. They are basis elements for the transpose spaces of the left dual morphism spaces:
\[
{\rm coev}_{\overline{a}}\in\Hom_ \ca(\mathbb{1}, \overline{a}\otimes a) \quad \textrm{and} \quad {\rm ev}_{\overline{a}}\in \Hom_\ca(a\otimes \overline{a},  \mathbb{1})
\]
Now these four morphisms are represented graphically as {\it oriented} caps and cups.

The left and right duality can be extended to a monoidal equivalence of categories between $\ca$ and $\ca^{op}$. For $f\in \Hom_\ca(a, b)$ we have the {\it left transpose} $\overline{f}^{l}\in \Hom_\ca(\overline{b},\overline{a})$ which is given by:
\[
\overline{f}^{l}:=({\rm ev}_{b}\otimes 1_{\overline{a}})\circ(1_{\overline{b}}\otimes f\otimes 1_{\overline{a}})\circ (1_{\overline{b}}\otimes {\rm coev}_{a})
\]
In diagrams this is left rotation of $f$ by $\pi$. Likewise, we define the {\it right transpose} $\overline{f}^{r}\in \Hom_\ca(\overline{b},\overline{a})$ via right rotation by $\pi$ as follows:

\[
\overline{f}^{r}:=(1_{\overline{a}}\otimes {\rm ev}_{\overline{b}})\circ (1_{\overline{a}}\otimes f \otimes 1_{\overline{b}})\circ({\rm coev}_{\overline{a}}\otimes 1_{\overline{b}})
\]

The pivotal structure gives a monoidal natural isomorphism between left and right dualities, and the strictness of the pivotal structures means $\overline{f}^{l}=\overline{f}^{r}$, hence we can unambiguously define $\overline{f}=\overline{f}^{l}=\overline{f}^{r}$.  (Using this fact it is easy to check that the duality functor $\overline{f}$ satisfies $\overline{\overline{f}}=f$.) Strictness of the pivotal structure is then represented graphically as:

\[
\begin{tikzpicture}[yscale=-1,xscale=-1,scale=0.05,
baseline={([yshift=-.5ex]current bounding box.center)}]
% path id='path3433'
% path spec='m 287.34524,428.08791 0,139.40105 144.45181,0 -1.01015,-140.4112 z'
\draw [fill=none,draw=black] (287.35mm,428.09mm)
-- ++(0.00mm,139.40mm)
-- ++(144.45mm,0.00mm)
-- ++(-1.01mm,-140.41mm)
-- cycle
;
% path id='path3449'
% path spec='m 362.99288,428.63184 c -0.64053,-32.25882 -10.21767,-75.82305 8.22384,-105.72551 17.25452,-27.9778 52.99039,-63.11966 83.41317,-55.72019 127.65767,31.04908 40.41196,303.42988 48.16817,407.18605 5.44877,72.88911 19.97216,147.22538 19.97216,220.02334'
\draw [fill=none,draw=black] (362.99mm,428.63mm)
.. controls ++(-0.64mm,-32.26mm) and ++(-18.44mm,29.90mm) .. ++(8.22mm,-105.73mm)
.. controls ++(17.25mm,-27.98mm) and ++(-30.42mm,-7.40mm) .. ++(83.41mm,-55.72mm)
.. controls ++(127.66mm,31.05mm) and ++(-7.76mm,-103.76mm) .. ++(48.17mm,407.19mm)
.. controls ++(5.45mm,72.89mm) and ++(0.00mm,-72.80mm) .. ++(19.97mm,220.02mm)
;
% path id='path3453'
% path spec='M 361.42857,566.93361 C 366.68754,633.61708 337.5536,745.39195 248.57143,735.50504 109.15179,720.01397 192.31971,465.53251 197.14286,388.36218 201.62275,316.68382 180.86494,222.78732 170,161.21932'
\draw [fill=none,draw=black] (361.43mm,566.93mm)
%%%% Warning: check controls
.. controls (366.69mm,633.62mm) and (337.55mm,745.39mm) .. (248.57mm,735.51mm)
%%%% Warning: check controls
.. controls (109.15mm,720.01mm) and (192.32mm,465.53mm) .. (197.14mm,388.36mm)
%%%% Warning: check controls
.. controls (201.62mm,316.68mm) and (180.86mm,222.79mm) .. (170.00mm,161.22mm)
;
% path id='path3455'
% path spec='m 345.63561,394.52475 c 9.72898,0.78872 14.13204,-9.77323 15.15229,-17.42513'
\draw [fill=none,draw=black] (345.64mm,394.52mm)
.. controls ++(9.73mm,0.79mm) and ++(-1.02mm,7.65mm) .. ++(15.15mm,-17.43mm)
;
% path id='path3457'
% path spec='m 375.23989,394.52909 c -9.41013,0.78913 -13.66888,-9.77824 -14.65569,-17.43406'
\draw [fill=none,draw=black] (375.24mm,394.53mm)
.. controls ++(-9.41mm,0.79mm) and ++(0.99mm,7.66mm) .. ++(-14.66mm,-17.43mm)
;
% path id='path3459'
% path spec='m 347.06418,610.23903 c 9.72898,0.78872 14.13204,-9.77323 15.15229,-17.42513'
\draw [fill=none,draw=black] (347.06mm,610.24mm)
.. controls ++(9.73mm,0.79mm) and ++(-1.02mm,7.65mm) .. ++(15.15mm,-17.43mm)
;
% path id='path3461'
% path spec='m 376.66846,610.24337 c -9.41013,0.78913 -13.66888,-9.77824 -14.65569,-17.43406'
\draw [fill=none,draw=black] (376.67mm,610.24mm)
.. controls ++(-9.41mm,0.79mm) and ++(0.99mm,7.66mm) .. ++(-14.66mm,-17.43mm)
;
\node [black] at (400.43mm,652.36mm) { $a$ };
\node [black] at (310.14mm,350.51mm) { $b$ };
\node [black] at (355.14mm,500.08mm) { $f$ };
\end{tikzpicture}=\begin{tikzpicture}[yscale=-1,scale=0.05,
baseline={([yshift=-.5ex]current bounding box.center)}]
% path id='path3433'
% path spec='m 287.34524,428.08791 0,139.40105 144.45181,0 -1.01015,-140.4112 z'
\draw [fill=none,draw=black] (287.35mm,428.09mm)
-- ++(0.00mm,139.40mm)
-- ++(144.45mm,0.00mm)
-- ++(-1.01mm,-140.41mm)
-- cycle
;
% path id='path3449'
% path spec='m 362.99288,428.63184 c -0.64053,-32.25882 -10.21767,-75.82305 8.22384,-105.72551 17.25452,-27.9778 52.99039,-63.11966 83.41317,-55.72019 127.65767,31.04908 40.41196,303.42988 48.16817,407.18605 5.44877,72.88911 19.97216,147.22538 19.97216,220.02334'
\draw [fill=none,draw=black] (362.99mm,428.63mm)
.. controls ++(-0.64mm,-32.26mm) and ++(-18.44mm,29.90mm) .. ++(8.22mm,-105.73mm)
.. controls ++(17.25mm,-27.98mm) and ++(-30.42mm,-7.40mm) .. ++(83.41mm,-55.72mm)
.. controls ++(127.66mm,31.05mm) and ++(-7.76mm,-103.76mm) .. ++(48.17mm,407.19mm)
.. controls ++(5.45mm,72.89mm) and ++(0.00mm,-72.80mm) .. ++(19.97mm,220.02mm)
;
% path id='path3453'
% path spec='M 361.42857,566.93361 C 366.68754,633.61708 337.5536,745.39195 248.57143,735.50504 109.15179,720.01397 192.31971,465.53251 197.14286,388.36218 201.62275,316.68382 180.86494,222.78732 170,161.21932'
\draw [fill=none,draw=black] (361.43mm,566.93mm)
%%%% Warning: check controls
.. controls (366.69mm,633.62mm) and (337.55mm,745.39mm) .. (248.57mm,735.51mm)
%%%% Warning: check controls
.. controls (109.15mm,720.01mm) and (192.32mm,465.53mm) .. (197.14mm,388.36mm)
%%%% Warning: check controls
.. controls (201.62mm,316.68mm) and (180.86mm,222.79mm) .. (170.00mm,161.22mm)
;
% path id='path3455'
% path spec='m 345.63561,394.52475 c 9.72898,0.78872 14.13204,-9.77323 15.15229,-17.42513'
\draw [fill=none,draw=black] (345.64mm,394.52mm)
.. controls ++(9.73mm,0.79mm) and ++(-1.02mm,7.65mm) .. ++(15.15mm,-17.43mm)
;
% path id='path3457'
% path spec='m 375.23989,394.52909 c -9.41013,0.78913 -13.66888,-9.77824 -14.65569,-17.43406'
\draw [fill=none,draw=black] (375.24mm,394.53mm)
.. controls ++(-9.41mm,0.79mm) and ++(0.99mm,7.66mm) .. ++(-14.66mm,-17.43mm)
;
% path id='path3459'
% path spec='m 347.06418,610.23903 c 9.72898,0.78872 14.13204,-9.77323 15.15229,-17.42513'
\draw [fill=none,draw=black] (347.06mm,610.24mm)
.. controls ++(9.73mm,0.79mm) and ++(-1.02mm,7.65mm) .. ++(15.15mm,-17.43mm)
;
% path id='path3461'
% path spec='m 376.66846,610.24337 c -9.41013,0.78913 -13.66888,-9.77824 -14.65569,-17.43406'
\draw [fill=none,draw=black] (376.67mm,610.24mm)
.. controls ++(-9.41mm,0.79mm) and ++(0.99mm,7.66mm) .. ++(-14.66mm,-17.43mm)
;
\node [black] at (400.43mm,652.36mm) { $a$ };
\node [black] at (310.14mm,350.51mm) { $b$ };
\node [black] at (355.14mm,500.08mm) { $f$ };
\end{tikzpicture}
\]

In a pivotal fusion category one may define the left and right {\bf pivotal} (or {\bf quantum}) {\bf traces}  of an endomorphism $f\in \Hom_\ca (a,a)$ of an object $a\in \ca$:

\[
{\rm tr}_{l}(f)={\rm ev}_{a}\circ (1_{\overline{a}}\otimes f)\circ {\rm coev}_{\overline{a}}
\quad \textrm{and} \quad 
{\rm tr}_{r}(f)={\rm ev}_{\overline{a}}\circ (f\otimes 1_{\overline{a}})\circ {\rm coev}_{a}
\]

\begin{defi}
A pivotal fusion category is called {\bf spherical} if the left and right quantum traces agree for all objects $a \in \ca$ and morphisms $f\in \Hom_\ca(a,a)$. This condition is depicted as the following equality of diagrams:
\[
\begin{tikzpicture}[yscale=-1,scale=0.05,
baseline={([yshift=-.5ex]current bounding box.center)}]
% path id='path3502'
% path spec='m 434.28571,345.21933 0,127.14285 120,0 0,-125.71428 z'
\draw [fill=none,draw=black] (434.29mm,345.22mm)
-- ++(0.00mm,127.14mm)
-- ++(120.00mm,0.00mm)
-- ++(0.00mm,-125.71mm)
-- cycle
;
% path id='path3506'
% path spec='M 500,346.47458 C 495.98913,277.19955 522.67905,198.4791 485.71428,133.79076 448.40673,68.502542 329.18121,46.520057 304.28571,129.50504 c -34.3376,114.45866 -23.47095,416.93461 20,532.85715 C 336.59949,695.19893 363.82134,725.36767 400,730.93362 546.98895,753.5473 532.15931,556.0159 515.29586,471.69866'
\draw [fill=none,draw=black] (500.00mm,346.47mm)
%%%% Warning: check controls
.. controls (495.99mm,277.20mm) and (522.68mm,198.48mm) .. (485.71mm,133.79mm)
%%%% Warning: check controls
.. controls (448.41mm,68.50mm) and (329.18mm,46.52mm) .. (304.29mm,129.51mm)
.. controls ++(-34.34mm,114.46mm) and ++(-43.47mm,-115.92mm) .. ++(20.00mm,532.86mm)
%%%% Warning: check controls
.. controls (336.60mm,695.20mm) and (363.82mm,725.37mm) .. (400.00mm,730.93mm)
%%%% Warning: check controls
.. controls (546.99mm,753.55mm) and (532.16mm,556.02mm) .. (515.30mm,471.70mm)
;
% path id='path3514'
% path spec='m 487.7294,224.3964 c 12.48983,3.45664 16.94088,-10.10534 17.53414,-21.63331'
\draw [fill=none,draw=black] (487.73mm,224.40mm)
.. controls ++(12.49mm,3.46mm) and ++(-0.59mm,11.53mm) .. ++(17.53mm,-21.63mm)
;
% path id='path3516'
% path spec='m 487.7294,224.3964 c 12.48983,3.45664 16.94088,-10.10534 17.53414,-21.63331'
\draw [fill=none,draw=black] (487.73mm,224.40mm)
.. controls ++(12.49mm,3.46mm) and ++(-0.59mm,11.53mm) .. ++(17.53mm,-21.63mm)
;
% path id='path3518'
% path spec='m 521.9461,224.7878 c -11.86755,3.45833 -16.09685,-10.1103 -16.66053,-21.64392'
\draw [fill=none,draw=black] (521.95mm,224.79mm)
.. controls ++(-11.87mm,3.46mm) and ++(0.56mm,11.53mm) .. ++(-16.66mm,-21.64mm)
;
% path id='path3520'
% path spec='m 508.23448,599.88299 c 12.48983,3.45664 16.94088,-10.10534 17.53414,-21.63331'
\draw [fill=none,draw=black] (508.23mm,599.88mm)
.. controls ++(12.49mm,3.46mm) and ++(-0.59mm,11.53mm) .. ++(17.53mm,-21.63mm)
;
% path id='path3522'
% path spec='m 508.23448,599.88299 c 12.48983,3.45664 16.94088,-10.10534 17.53414,-21.63331'
\draw [fill=none,draw=black] (508.23mm,599.88mm)
.. controls ++(12.49mm,3.46mm) and ++(-0.59mm,11.53mm) .. ++(17.53mm,-21.63mm)
;
% path id='path3524'
% path spec='m 542.45118,600.27439 c -11.86755,3.45833 -16.09685,-10.1103 -16.66053,-21.64392'
\draw [fill=none,draw=black] (542.45mm,600.27mm)
.. controls ++(-11.87mm,3.46mm) and ++(0.56mm,11.53mm) .. ++(-16.66mm,-21.64mm)
;
% path id='path3526'
% path spec='m 266.53595,346.30188 c 12.48983,-3.45664 16.94088,10.10534 17.53414,21.63331'
\draw [fill=none,draw=black] (266.54mm,346.30mm)
.. controls ++(12.49mm,-3.46mm) and ++(-0.59mm,-11.53mm) .. ++(17.53mm,21.63mm)
;
% path id='path3528'
% path spec='m 266.53595,346.30188 c 12.48983,-3.45664 16.94088,10.10534 17.53414,21.63331'
\draw [fill=none,draw=black] (266.54mm,346.30mm)
.. controls ++(12.49mm,-3.46mm) and ++(-0.59mm,-11.53mm) .. ++(17.53mm,21.63mm)
;
% path id='path3530'
% path spec='m 300.75265,345.91048 c -11.86755,-3.45833 -16.09685,10.1103 -16.66053,21.64392'
\draw [fill=none,draw=black] (300.75mm,345.91mm)
.. controls ++(-11.87mm,-3.46mm) and ++(0.56mm,-11.53mm) .. ++(-16.66mm,21.64mm)
;
\node [black] at (217.39mm,361.32mm) { $a$ };
\node [black] at (494.87mm,412.13mm) { $f$ };
\end{tikzpicture}=\begin{tikzpicture}[yscale=-1,xscale=-1,scale=0.05,
baseline={([yshift=-.5ex]current bounding box.center)}]
% path id='path3502'
% path spec='m 434.28571,345.21933 0,127.14285 120,0 0,-125.71428 z'
\draw [fill=none,draw=black] (434.29mm,345.22mm)
-- ++(0.00mm,127.14mm)
-- ++(120.00mm,0.00mm)
-- ++(0.00mm,-125.71mm)
-- cycle
;
% path id='path3506'
% path spec='M 500,346.47458 C 495.98913,277.19955 522.67905,198.4791 485.71428,133.79076 448.40673,68.502542 329.18121,46.520057 304.28571,129.50504 c -34.3376,114.45866 -23.47095,416.93461 20,532.85715 C 336.59949,695.19893 363.82134,725.36767 400,730.93362 546.98895,753.5473 532.15931,556.0159 515.29586,471.69866'
\draw [fill=none,draw=black] (500.00mm,346.47mm)
%%%% Warning: check controls
.. controls (495.99mm,277.20mm) and (522.68mm,198.48mm) .. (485.71mm,133.79mm)
%%%% Warning: check controls
.. controls (448.41mm,68.50mm) and (329.18mm,46.52mm) .. (304.29mm,129.51mm)
.. controls ++(-34.34mm,114.46mm) and ++(-43.47mm,-115.92mm) .. ++(20.00mm,532.86mm)
%%%% Warning: check controls
.. controls (336.60mm,695.20mm) and (363.82mm,725.37mm) .. (400.00mm,730.93mm)
%%%% Warning: check controls
.. controls (546.99mm,753.55mm) and (532.16mm,556.02mm) .. (515.30mm,471.70mm)
;
% path id='path3514'
% path spec='m 487.7294,224.3964 c 12.48983,3.45664 16.94088,-10.10534 17.53414,-21.63331'
\draw [fill=none,draw=black] (487.73mm,224.40mm)
.. controls ++(12.49mm,3.46mm) and ++(-0.59mm,11.53mm) .. ++(17.53mm,-21.63mm)
;
% path id='path3516'
% path spec='m 487.7294,224.3964 c 12.48983,3.45664 16.94088,-10.10534 17.53414,-21.63331'
\draw [fill=none,draw=black] (487.73mm,224.40mm)
.. controls ++(12.49mm,3.46mm) and ++(-0.59mm,11.53mm) .. ++(17.53mm,-21.63mm)
;
% path id='path3518'
% path spec='m 521.9461,224.7878 c -11.86755,3.45833 -16.09685,-10.1103 -16.66053,-21.64392'
\draw [fill=none,draw=black] (521.95mm,224.79mm)
.. controls ++(-11.87mm,3.46mm) and ++(0.56mm,11.53mm) .. ++(-16.66mm,-21.64mm)
;
% path id='path3520'
% path spec='m 508.23448,599.88299 c 12.48983,3.45664 16.94088,-10.10534 17.53414,-21.63331'
\draw [fill=none,draw=black] (508.23mm,599.88mm)
.. controls ++(12.49mm,3.46mm) and ++(-0.59mm,11.53mm) .. ++(17.53mm,-21.63mm)
;
% path id='path3522'
% path spec='m 508.23448,599.88299 c 12.48983,3.45664 16.94088,-10.10534 17.53414,-21.63331'
\draw [fill=none,draw=black] (508.23mm,599.88mm)
.. controls ++(12.49mm,3.46mm) and ++(-0.59mm,11.53mm) .. ++(17.53mm,-21.63mm)
;
% path id='path3524'
% path spec='m 542.45118,600.27439 c -11.86755,3.45833 -16.09685,-10.1103 -16.66053,-21.64392'
\draw [fill=none,draw=black] (542.45mm,600.27mm)
.. controls ++(-11.87mm,3.46mm) and ++(0.56mm,11.53mm) .. ++(-16.66mm,-21.64mm)
;
% path id='path3526'
% path spec='m 266.53595,346.30188 c 12.48983,-3.45664 16.94088,10.10534 17.53414,21.63331'
\draw [fill=none,draw=black] (266.54mm,346.30mm)
.. controls ++(12.49mm,-3.46mm) and ++(-0.59mm,-11.53mm) .. ++(17.53mm,21.63mm)
;
% path id='path3528'
% path spec='m 266.53595,346.30188 c 12.48983,-3.45664 16.94088,10.10534 17.53414,21.63331'
\draw [fill=none,draw=black] (266.54mm,346.30mm)
.. controls ++(12.49mm,-3.46mm) and ++(-0.59mm,-11.53mm) .. ++(17.53mm,21.63mm)
;
% path id='path3530'
% path spec='m 300.75265,345.91048 c -11.86755,-3.45833 -16.09685,10.1103 -16.66053,21.64392'
\draw [fill=none,draw=black] (300.75mm,345.91mm)
.. controls ++(-11.87mm,-3.46mm) and ++(0.56mm,-11.53mm) .. ++(-16.66mm,21.64mm)
;
\node [black] at (217.39mm,361.32mm) { $a$ };
\node [black] at (494.87mm,412.13mm) { $f$ };
\end{tikzpicture}
\]
\end{defi}

We are also now able to define the {\bf quantum dimension} $d_a$ of an object $a\in \ca$ by letting $d_a := {\rm tr}(1_a)$, the trace of the identity morphism $1_a\in \Hom_\ca(a,a)$. Then the {\bf global dimension} $D_\ca$ of $\ca$ is defined to be 
\[
D_\ca = \sum_{a\in \Irr(\ca)}d^{2}_{a}
\]

\subsection{Ribbon categories} A {\bf braided} tensor category has, for each pair of objects, an isomorphism $c_{a,b}\in \Hom_\ca(a\otimes b, b\otimes a)$ called the braiding. This is a natural isomorphism between the bifunctors $\otimes$ and $\otimes^{\rm op}$ and it satisfies the braid (or hexagon) relations.  The morphisms $c_{a,b}$ and $c^{-1}_{a,b}$ are represented, respectively, by the pictures
\[
\begin{tikzpicture}[yscale=-1,scale=0.05,
baseline={([yshift=-.5ex]current bounding box.center)}]
% path id='path3548'
% path spec='M 134.28571,666.6479 C 440,396.6479 440,396.6479 440,396.6479'
\draw [fill=none,draw=black] (134.29mm,666.65mm)
%%%% Warning: check controls
.. controls (440.00mm,396.65mm) and (440.00mm,396.65mm) .. (440.00mm,396.65mm)
;
% path id='path3550'
% path spec='M 437.14286,665.21933 C 304.28571,542.36218 304.28571,542.36218 304.28571,542.36218'
\draw [fill=none,draw=black] (437.14mm,665.22mm)
%%%% Warning: check controls
.. controls (304.29mm,542.36mm) and (304.29mm,542.36mm) .. (304.29mm,542.36mm)
;
% path id='path3552'
% path spec='M 274.61628,515.69665 155.24086,398.88485'
\draw [fill=none,draw=black] (274.62mm,515.70mm)
-- (155.24mm,398.88mm)
;
% path id='path3560'
% path spec='m 175.51097,438.45365 c 4.43931,-7.03462 3.38584,-13.96039 -0.90126,-20.69776'
\draw [fill=none,draw=black] (175.51mm,438.45mm)
.. controls ++(4.44mm,-7.03mm) and ++(4.29mm,6.74mm) .. ++(-0.90mm,-20.70mm)
;
% path id='path3562'
% path spec='m 195.09013,417.4506 c -6.79172,4.80271 -13.76326,4.1145 -20.71653,0.18723'
\draw [fill=none,draw=black] (195.09mm,417.45mm)
.. controls ++(-6.79mm,4.80mm) and ++(6.95mm,3.93mm) .. ++(-20.72mm,0.19mm)
;
% path id='path3583'
% path spec='m 388.84128,424.05277 c 7.00754,4.48193 13.93957,3.47052 20.70284,-0.7756'
\draw [fill=none,draw=black] (388.84mm,424.05mm)
.. controls ++(7.01mm,4.48mm) and ++(-6.76mm,4.25mm) .. ++(20.70mm,-0.78mm)
;
% path id='path3585'
% path spec='m 409.72508,443.75907 c -4.76139,-6.82075 -4.03087,-13.78799 -0.0615,-20.71728'
\draw [fill=none,draw=black] (409.73mm,443.76mm)
.. controls ++(-4.76mm,-6.82mm) and ++(-3.97mm,6.93mm) .. ++(-0.06mm,-20.72mm)
;
\node [black] at (124.29mm,628.08mm) { $a$ };
\node [black] at (447.14mm,605.22mm) { $b$ };
\end{tikzpicture} \quad \textrm{and} \quad \begin{tikzpicture}[yscale=-1,xscale=-1,scale=0.05,
baseline={([yshift=-.5ex]current bounding box.center)}]
% path id='path3548'
% path spec='M 134.28571,666.6479 C 440,396.6479 440,396.6479 440,396.6479'
\draw [fill=none,draw=black] (134.29mm,666.65mm)
%%%% Warning: check controls
.. controls (440.00mm,396.65mm) and (440.00mm,396.65mm) .. (440.00mm,396.65mm)
;
% path id='path3550'
% path spec='M 437.14286,665.21933 C 304.28571,542.36218 304.28571,542.36218 304.28571,542.36218'
\draw [fill=none,draw=black] (437.14mm,665.22mm)
%%%% Warning: check controls
.. controls (304.29mm,542.36mm) and (304.29mm,542.36mm) .. (304.29mm,542.36mm)
;
% path id='path3552'
% path spec='M 274.61628,515.69665 155.24086,398.88485'
\draw [fill=none,draw=black] (274.62mm,515.70mm)
-- (155.24mm,398.88mm)
;
% path id='path3560'
% path spec='m 175.51097,438.45365 c 4.43931,-7.03462 3.38584,-13.96039 -0.90126,-20.69776'
\draw [fill=none,draw=black] (175.51mm,438.45mm)
.. controls ++(4.44mm,-7.03mm) and ++(4.29mm,6.74mm) .. ++(-0.90mm,-20.70mm)
;
% path id='path3562'
% path spec='m 195.09013,417.4506 c -6.79172,4.80271 -13.76326,4.1145 -20.71653,0.18723'
\draw [fill=none,draw=black] (195.09mm,417.45mm)
.. controls ++(-6.79mm,4.80mm) and ++(6.95mm,3.93mm) .. ++(-20.72mm,0.19mm)
;
% path id='path3583'
% path spec='m 388.84128,424.05277 c 7.00754,4.48193 13.93957,3.47052 20.70284,-0.7756'
\draw [fill=none,draw=black] (388.84mm,424.05mm)
.. controls ++(7.01mm,4.48mm) and ++(-6.76mm,4.25mm) .. ++(20.70mm,-0.78mm)
;
% path id='path3585'
% path spec='m 409.72508,443.75907 c -4.76139,-6.82075 -4.03087,-13.78799 -0.0615,-20.71728'
\draw [fill=none,draw=black] (409.73mm,443.76mm)
.. controls ++(-4.76mm,-6.82mm) and ++(-3.97mm,6.93mm) .. ++(-0.06mm,-20.72mm)
;
\node [black] at (104.29mm,628.08mm) { $b$ };
\node [black] at (447.14mm,605.22mm) { $a$ };
\end{tikzpicture}
\]

\begin{defi}
A {\bf ribbon} fusion category is a {\it spherical} and {\it braided} fusion category.
\end{defi}
\noindent The ribbon structure is a natural isomorphism of the identity functor given by  isomorphisms $\theta_{a}\in \Hom_\ca(a,a)$ called {\bf twists} defined as follows:
\[ \theta_{a}:= (1_{a}\otimes {\rm ev}_{a})\circ c_{a,a}\circ(1_{a}\otimes {\rm coev}_{a})=\begin{tikzpicture}[yscale=-1,scale=0.05,
baseline={([yshift=-.5ex]current bounding box.center)}]
% path id='path3628'
% path spec='m 204.28571,705.21933 c 21.33117,-124.36079 -72.48923,-346.52055 101.42858,-390 12.93012,-3.23254 51.79107,-6.41786 60,10 38.90219,77.80439 -76.10717,121.9404 -110,42.85714'
\draw [fill=none,draw=black] (204.29mm,705.22mm)
.. controls ++(21.33mm,-124.36mm) and ++(-173.92mm,43.48mm) .. ++(101.43mm,-390.00mm)
.. controls ++(12.93mm,-3.23mm) and ++(-8.21mm,-16.42mm) .. ++(60.00mm,10.00mm)
.. controls ++(38.90mm,77.80mm) and ++(33.89mm,79.08mm) .. ++(-110.00mm,42.86mm)
;
% path id='path3632'
% path spec='M 234.28571,319.50504 C 204.3519,247.46934 183.81269,162.78775 188.57143,86.647896'
\draw [fill=none,draw=black] (234.29mm,319.51mm)
%%%% Warning: check controls
.. controls (204.35mm,247.47mm) and (183.81mm,162.79mm) .. (188.57mm,86.65mm)
;
\node [black,font=\sffamily] at (255.71mm,660.93mm) { $a$ };
% path id='path3638'
% path spec='m 170.16323,499.25513 c 14.10015,4.49783 24.17894,-13.37559 27.87156,-27.88394'
\draw [fill=none,draw=black] (170.16mm,499.26mm)
.. controls ++(14.10mm,4.50mm) and ++(-3.69mm,14.51mm) .. ++(27.87mm,-27.88mm)
;
% path id='path3642'
% path spec='m 224.54541,499.76235 c -13.84666,4.49856 -23.74426,-13.37773 -27.3705,-27.88841'
\draw [fill=none,draw=black] (224.55mm,499.76mm)
.. controls ++(-13.85mm,4.50mm) and ++(3.63mm,14.51mm) .. ++(-27.37mm,-27.89mm)
;
\end{tikzpicture}\]
In particular, if $a$ is simple, $\theta_{a}$ is a scalar multiple of the identity morphism $1_{a}$. (As is common practice we often abuse notation and write $\theta_{a}$ for this scalar.)  The twist isomorphisms satisfy the {\it ribbon axiom}:
$$\theta_{a\otimes b}=c_{b,a}\circ c_{a,b}\circ (\theta_{a}\otimes \theta_{b})$$

\subsection{Modularity and the Drinfel'd center}

Given a spherical fusion category $\ca$ one can produce a canonical ribbon fusion category $Z(\ca)$ called the {\bf Drinfel'd center}. Briefly, $Z(\ca)$ is obtained as the category of $\ca \mymathhyphen \ca$ bimodule endofunctors on $\ca$ viewed as a $\ca \mymathhyphen \ca$ bimodule category. The objects of $Z(\ca)$ are given by a pair of an object from $\ca$ and a {\bf half-braiding}. That is, the bimodule endofunctor $F$ corresponds to the object $F(\mathbb{1}) \in \ca$ along with half-braiding natural isomorphisms $e_{F,a}:F(\mathbb{1})\otimes a\rightarrow a\otimes F(\mathbb{1})$ for all objects $a\in \ca$ coming from the bimodule structure morphisms of $F$. The Drinfel'd center is completely determined by the half-braidings. See \cite[Definition 7.13.1]{ENOG} for details.

Most significantly for our purposes, the Drinfel'd center of a spherical fusion category is always \cite{Mu2} a {\it modular} category:
\begin{defi}

A ribbon fusion category is {\bf modular} if the braiding is {\it non-degenerate}, i.e. if the {\bf S-matrix} is {\it invertible}. The S-matrix for a ribbon fusion category $\da$ with $n$ simple objects is the $n\times n$ matrix $S:= \frac{1}{\sqrt{D_\da}} \hat{S}$ where:

\[\hat{S}_{a,b}:={\rm tr} \; (c_{b,\overline{a}}\circ c_{\overline{a},b})=\begin{tikzpicture}[yscale=-1,scale=0.03,baseline={([yshift=-.5ex]current bounding box.center)}]
\begin{scope}[shift={(0.00mm,719.29mm)}]
% path id='path3342'
% path spec='M 554.2869,-188.47188 C 420.63753,-186.671 250.32092,-53.775386 251.45435,137.21818'
\draw [fill=none,draw=black] (554.29mm,-188.47mm)
%%%% Warning: check controls
.. controls (420.64mm,-186.67mm) and (250.32mm,-53.78mm) .. (251.45mm,137.22mm)
;
% path id='path3344'
% path spec='M 554.2869,459.52986 C 420.63753,457.72898 250.32092,324.83338 251.45435,133.83978'
\draw [fill=none,draw=black] (554.29mm,459.53mm)
%%%% Warning: check controls
.. controls (420.64mm,457.73mm) and (250.32mm,324.83mm) .. (251.45mm,133.84mm)
;
% path id='path3346'
% path spec='M 552.93177,459.52986 C 686.58114,457.72898 856.89775,324.83338 855.76432,133.83978'
\draw [fill=none,draw=black] (552.93mm,459.53mm)
%%%% Warning: check controls
.. controls (686.58mm,457.73mm) and (856.90mm,324.83mm) .. (855.76mm,133.84mm)
;
% path id='path3348'
% path spec='m 1012.9318,439.5299 c 133.6493,-1.80088 303.9659,-134.69649 302.8325,-325.69006'
\draw [fill=none,draw=black] (1012.93mm,439.53mm)
.. controls ++(133.65mm,-1.80mm) and ++(1.13mm,190.99mm) .. ++(302.83mm,-325.69mm)
;
% path id='path3350'
% path spec='m 1012.9318,-208.47184 c 133.6493,1.80088 303.9659,134.696484 302.8325,325.69008'
\draw [fill=none,draw=black] (1012.93mm,-208.47mm)
.. controls ++(133.65mm,1.80mm) and ++(1.13mm,-190.99mm) .. ++(302.83mm,325.69mm)
;
% path id='path3352'
% path spec='M 1014.2869,-208.47184 C 880.63753,-206.67096 710.32092,-73.775356 711.45435,117.21824'
\draw [fill=none,draw=black] (1014.29mm,-208.47mm)
%%%% Warning: check controls
.. controls (880.64mm,-206.67mm) and (710.32mm,-73.78mm) .. (711.45mm,117.22mm)
;
% path id='path3356'
% path spec='m 553.92857,-188.53063 c 113.03248,4.68642 152.55261,29.44568 209.13343,67.75561'
\draw [fill=none,draw=black] (553.93mm,-188.53mm)
.. controls ++(113.03mm,4.69mm) and ++(-56.58mm,-38.31mm) .. ++(209.13mm,67.76mm)
;
% path id='path3358'
% path spec='m 819.90719,-65.414427 c 62.23754,81.027303 26.76504,134.090631 35.8071,199.919507'
\draw [fill=none,draw=black] (819.91mm,-65.41mm)
.. controls ++(62.24mm,81.03mm) and ++(-9.04mm,-65.83mm) .. ++(35.81mm,199.92mm)
;
% path id='path3360'
% path spec='m 711.42857,118.79079 c 7.70437,62.01713 42.3155,124.85474 61.62291,173.12324'
\draw [fill=none,draw=black] (711.43mm,118.79mm)
.. controls ++(7.70mm,62.02mm) and ++(-19.31mm,-48.27mm) .. ++(61.62mm,173.12mm)
;
% path id='path3362'
% path spec='m 801.60189,353.57545 c 47.16765,70.4147 117.74097,91.67896 211.25521,85.92962'
\draw [fill=none,draw=black] (801.60mm,353.58mm)
.. controls ++(47.17mm,70.41mm) and ++(-93.51mm,5.75mm) .. ++(211.26mm,85.93mm)
;
\node [black] at (314.29mm,-143.35mm) { $a$ };
\node [black] at (1300mm,-161.92mm) { $b$ };
% path id='path3380'
% path spec='m 202.85717,143.79074 c 27.08409,2.45227 45.98067,-19.27386 48.57143,-40'
\draw [fill=none,draw=black] (202.86mm,143.79mm)
.. controls ++(27.08mm,2.45mm) and ++(-2.59mm,20.73mm) .. ++(48.57mm,-40.00mm)
;
% path id='path3382'
% path spec='m 299.87965,143.79074 c -27.08409,2.45227 -45.98067,-19.27386 -48.57143,-40'
\draw [fill=none,draw=black] (299.88mm,143.79mm)
.. controls ++(-27.08mm,2.45mm) and ++(2.59mm,20.73mm) .. ++(-48.57mm,-40.00mm)
;
% path id='path3384'
% path spec='m 806.89778,102.39855 c 27.08409,-2.452275 45.98067,19.27386 48.57143,40'
\draw [fill=none,draw=black] (806.90mm,102.40mm)
.. controls ++(27.08mm,-2.45mm) and ++(-2.59mm,-20.73mm) .. ++(48.57mm,40.00mm)
;
% path id='path3386'
% path spec='m 903.92026,102.39855 c -27.08409,-2.452275 -45.98067,19.27386 -48.57143,40'
\draw [fill=none,draw=black] (903.92mm,102.40mm)
.. controls ++(-27.08mm,-2.45mm) and ++(2.59mm,-20.73mm) .. ++(-48.57mm,40.00mm)
;
% path id='path3388'
% path spec='m 664.3724,123.79083 c 27.08409,2.45227 45.98067,-19.27386 48.57143,-39.999996'
\draw [fill=none,draw=black] (664.37mm,123.79mm)
.. controls ++(27.08mm,2.45mm) and ++(-2.59mm,20.73mm) .. ++(48.57mm,-40.00mm)
;
% path id='path3390'
% path spec='M 761.39488,123.79083 C 734.31079,126.2431 715.41421,104.51697 712.82345,83.790834'
\draw [fill=none,draw=black] (761.39mm,123.79mm)
%%%% Warning: check controls
.. controls (734.31mm,126.24mm) and (715.41mm,104.52mm) .. (712.82mm,83.79mm)
;
% path id='path3392'
% path spec='m 1267.2549,84.184266 c 27.0841,-2.452275 45.9806,19.273864 48.5714,40.000004'
\draw [fill=none,draw=black] (1267.25mm,84.18mm)
.. controls ++(27.08mm,-2.45mm) and ++(-2.59mm,-20.73mm) .. ++(48.57mm,40.00mm)
;
% path id='path3394'
% path spec='m 1364.2774,84.184266 c -27.0841,-2.452275 -45.9807,19.273864 -48.5715,40.000004'
\draw [fill=none,draw=black] (1364.28mm,84.18mm)
.. controls ++(-27.08mm,-2.45mm) and ++(2.59mm,-20.73mm) .. ++(-48.57mm,40.00mm)
;
\end{scope}
\end{tikzpicture}\]

\end{defi}    
The \textit{Verlinde formula} allows the fusion rules of a modular category $\da$ to be expressed in terms of entries of its $S$-matrix:
$$N^{a}_{c,d}=\sum_{e\in \text{Irr}(\ca)} \frac{S_{c,e} S_{d,e} \overline{S}_{a,e}}{S_{\mathbb{1},e}}$$
The $S$-matrix along with the $n\times n$ matrix $T=\hat{T}$ where $\hat{T}_{a,b}:=\delta_{a,b} \theta_{a}$ are together referred to as the {\bf modular data}. By \cite[Proposition 5.7]{NS2} the entries of $\hat{S}$ and $\hat{T}$ are cyclotomic integers in $\mathbb{Q}( e^{2 \pi i / M} )$. Vafa's theorem asserts that all twists are roots of unity, hence $T$ is finite order; the number $M:=ord(T)$ is called the {\bf conductor} of the category. 

The {\bf charge conjugation} matrix is the $n\times n$ matrix $C=\delta_{\overline{a},b}$, and the {\bf central charge} is the positive square root of the ratio of the {\bf Gauss sums} for $\da$: 
\[
\xi=\frac{1}{\sqrt{D_\da}}\sum_{a} \theta_{a} d^{2}_{a}
\]
If $\da$ is modular then this is always defined and a root of unity.  These matrices satisfy the following relations:
\[(ST)^{3}=\xi S^{2},\ S^{2}=C,\  C^{2}=1,\  CS=SC,\ CT=TC. \]
Which implies that
\[
\left( \begin{array}{cc} 0 & -1\\1 & 0 \\ \end{array}\right) \rightarrow S \quad \left( \begin{array}{cc} 1 & 1 \\ 0 & 1\\ \end{array}\right)\rightarrow T
\]
is a projective representations of $SL_{2}(\Z)$.    See \cite[Chapter 3]{BK} for all of the above results.  It was proved in \cite{NS2} that this representation of $SL_{2}(\Z)$ factors through $SL_{2}(\Z_{M})$.   

\subsection{Induction to the center}

Consider the forgetful functor $Z(\ca)\rightarrow \ca$. This is given by forgetting the half-braiding, i.e. by sending the bimodule endofunctor $F\in Z(\ca)$ to the object $F(\mathbb{1})\in \ca$. M\"uger showed that the forgetful functor has a left adjoint $\mathcal{I}:\ca \to Z(\ca)$ called the {\bf central induction functor}. 

If we start with a ribbon fusion category $\da$ then where it ``sits inside'' its Drinfel'd center $Z(\da)$ is particularly well understood. Define a new ribbon fusion category $\tilde{\da}$ such that  $\tilde{\da} = \da$ as tensor categories but with a new braiding :
\[ 
\tilde{c}_{a,b}:=c^{-1}_{b,a}. 
\]
which gives the following modular data:
\[
\tilde{S}_{a, b}=S^{-1}_{a,b}=S_{\overline{a}, b} \quad \textrm{and} \quad \tilde{T}_{a}=\theta_{\tilde{a}}=\theta^{-1}_{a}
\]

For any ribbon fusion category we always have a braided tensor functor 
\[
G: \da\boxtimes \tilde{\da}\rightarrow Z(\da)
\]
where $\boxtimes$ is the Deligne tensor product of abelian categories. If $\da$ is modular then by \cite[Theorem 7.10]{Mu2} $G$ is a {\it braided equivalence} of tensor categories. Using $G$, we can identify $Z(\da)$ with $\da \boxtimes \tilde{\da}$. Under this identification, the forgetful functor $Z(\da)\rightarrow \da$ is given by:
\[
X\boxtimes Y \mapsto X\otimes Y
\]
along with forgetting about the braiding.

Notice that $G|_{\da\boxtimes \mathbb{1}}$ is a tensor equivalence onto its image, hence any ribbon fusion category embeds as a full subcategory into a modular one.  In particular, this implies the $T$-matrix of a (not necessarilly modular) ribbon fusion category also has finite order, which we also call the conductor.

Thus, in the case that $\da$ is modular, we can recover the modular data for $Z(\da)$ from the modular data of $\da$ and the structure of the forgetful functor $Z(\da)\rightarrow \da$. Since the simple objects of $Z(\da)$ are indexed by equivalence classes of $a\boxtimes \tilde{b}$ we have the following identities: 
\[
 S_{a\boxtimes  \tilde{b}, c\boxtimes  \tilde{d}}=S_{a,c}\tilde{S}_{b,d}=S_{a,c} S_{\overline{b}, d}\quad
\]
\[\theta_{a\boxtimes  \tilde{b}}=\frac{\theta_{a}}{\theta_{b}}\]
These are a consequence of M\"uger's result mentioned above; their computation can be found in the proof  of \cite[Lemma 6.2]{NS2} at equations (6.5) and (6.6).

\section{Rotation eigenvalues} \label{sec:generalized_rotation}

From here on out let $\ca$ be a spherical fusion category.

\subsection{Generalized rotation and GFS indicators}

For a spherical fusion category $\ca$ recall that objects in the Drinfel'd center $Z(\ca)$ are pairs of objects $b\in \ca$ with a half-braiding natural isomorphism $e_{b, \cdot}:b\otimes \cdot \to \cdot \otimes b$.

\begin{defi}\cite[Section 2]{NS2}
 Let $\ca$ be a spherical fusion category. For objects $a\in \ca$ and $b\in Z(\ca)$ and positive integer $n\in \N$ we define the {\bf generalized rotation operator} 
 \[
 \rho^{b}_{n,a}: \Hom_\ca(b, a^{\otimes n})\rightarrow \Hom_\ca(b, a^{\otimes n})
 \]
given by:
\[
f \mapsto ({\rm ev}_{a}\otimes 1_{a^{\otimes n}})\circ (1_{\overline{a}}\otimes f\otimes 1_{a})\circ(e_{b,\overline{a}}\otimes 1_{a})\circ (1_{b}\otimes {\rm coev}_{a})
\]
which is expressed diagrammatically by:
\[
\begin{tikzpicture}[scale=0.05,yscale=-1,
baseline={([yshift=-.5ex]current bounding box.center)}]
%yscale=-1
% path id='path3336'
% path spec='m 191.42857,372.3622 0,125.71429 331.42857,0 0,-127.14286 z'
\draw [fill=none,draw=black] (191.43mm,372.36mm)
-- ++(0.00mm,125.71mm)
-- ++(331.43mm,0.00mm)
-- ++(0.00mm,-127.14mm)
-- cycle
;
% path id='path3338'
% path spec='m 230,371.50506 0,-141.42857 0,0'
\draw [fill=none,draw=black] (230.00mm,371.51mm)
-- ++(0.00mm,-141.43mm)
-- ++(0.00mm,0.00mm)
;
% path id='path3340'
% path spec='m 274.28571,372.3622 c 0,-142.85714 0,-142.85714 0,-142.85714'
\draw [fill=none,draw=black] (274.29mm,372.36mm)
.. controls ++(0.00mm,-142.86mm) and ++(0.00mm,0.00mm) .. ++(0.00mm,-142.86mm)
;
% path id='path3342'
% path spec='m 445.71429,371.50506 c 0,-141.42857 0,-141.42857 0,-141.42857'
\draw [fill=none,draw=black] (445.71mm,371.51mm)
.. controls ++(0.00mm,-141.43mm) and ++(0.00mm,0.00mm) .. ++(0.00mm,-141.43mm)
;
\draw [fill=black,draw=black] (307.14mm,290.93mm) circle (0.92mm) ;
\draw [fill=black,draw=black] (327.14mm,290.93mm) circle (0.92mm) ;
\draw [fill=black,draw=black] (347.14mm,290.93mm) circle (0.92mm) ;
\draw [fill=black,draw=black] (367.14mm,290.93mm) circle (0.92mm) ;
\draw [fill=black,draw=black] (387.14mm,290.93mm) circle (0.92mm) ;
\draw [fill=black,draw=black] (407.14mm,290.93mm) circle (0.92mm) ;
% path id='path3358'
% path spec='m 348.56192,498.08382 c 4.30473,144.27106 4.30473,144.27106 4.30473,144.27106'
\draw [fill=none,draw=black] (348.56mm,498.08mm)
.. controls ++(4.30mm,144.27mm) and ++(0.00mm,0.00mm) .. ++(4.30mm,144.27mm)
;
% path id='path3376'
% path spec='m 217.69378,291.21951 c 7.1511,-0.48215 10.02854,-12.34905 11.4966,-14.15682'
\draw [fill=none,draw=black] (217.69mm,291.22mm)
.. controls ++(7.15mm,-0.48mm) and ++(-1.47mm,1.81mm) .. ++(11.50mm,-14.16mm)
;
% path id='path3378'
% path spec='m 238.90107,291.21226 c -5.74511,-2.70062 -8.91925,-12.48696 -10.10152,-14.14214'
\draw [fill=none,draw=black] (238.90mm,291.21mm)
.. controls ++(-5.75mm,-2.70mm) and ++(1.18mm,1.66mm) .. ++(-10.10mm,-14.14mm)
;
% path id='path3380'
% path spec='m 263.15065,291.21951 c 7.1511,-0.48215 10.02854,-12.34905 11.4966,-14.15682'
\draw [fill=none,draw=black] (263.15mm,291.22mm)
.. controls ++(7.15mm,-0.48mm) and ++(-1.47mm,1.81mm) .. ++(11.50mm,-14.16mm)
;
% path id='path3382'
% path spec='m 284.35794,291.21226 c -5.74511,-2.70062 -8.91925,-12.48696 -10.10152,-14.14214'
\draw [fill=none,draw=black] (284.36mm,291.21mm)
.. controls ++(-5.75mm,-2.70mm) and ++(1.18mm,1.66mm) .. ++(-10.10mm,-14.14mm)
;
% path id='path3384'
% path spec='m 434.87658,296.27027 c 7.1511,-0.48215 10.02854,-12.34905 11.4966,-14.15682'
\draw [fill=none,draw=black] (434.88mm,296.27mm)
.. controls ++(7.15mm,-0.48mm) and ++(-1.47mm,1.81mm) .. ++(11.50mm,-14.16mm)
;
% path id='path3386'
% path spec='m 456.08387,296.26302 c -5.74511,-2.70062 -8.91925,-12.48696 -10.10152,-14.14214'
\draw [fill=none,draw=black] (456.08mm,296.26mm)
.. controls ++(-5.75mm,-2.70mm) and ++(1.18mm,1.66mm) .. ++(-10.10mm,-14.14mm)
;
% path id='path3388'
% path spec='m 338.91209,572.04192 c 7.1511,-0.48215 10.02854,-12.34905 11.4966,-14.15682'
\draw [fill=none,draw=black] (338.91mm,572.04mm)
.. controls ++(7.15mm,-0.48mm) and ++(-1.47mm,1.81mm) .. ++(11.50mm,-14.16mm)
;
% path id='path3390'
% path spec='m 360.11938,572.03467 c -5.74511,-2.70062 -8.91925,-12.48696 -10.10152,-14.14214'
\draw [fill=none,draw=black] (360.12mm,572.03mm)
.. controls ++(-5.75mm,-2.70mm) and ++(1.18mm,1.66mm) .. ++(-10.10mm,-14.14mm)
;
% path id='path3392'
% path spec='m 336.21492,180.8924 c -26.39266,48.11803 -82.22958,6.74899 -119.06806,39.32225'
\draw [fill=none,draw=black] (336.21mm,180.89mm)
.. controls ++(-26.39mm,48.12mm) and ++(36.84mm,-32.57mm) .. ++(-119.07mm,39.32mm)
;
% path id='path3396'
% path spec='m 336.15802,180.93108 c 25.89211,47.2055 80.67006,6.621 116.80989,38.57653'
\draw [fill=none,draw=black] (336.16mm,180.93mm)
.. controls ++(25.89mm,47.21mm) and ++(-36.14mm,-31.96mm) .. ++(116.81mm,38.58mm)
;
\node [black] at (355.71mm,130.22mm) { $a^n$ };
\node [black] at (352.86mm,439.51mm) { $f$ };
\node [black] at (350.57mm,690.93mm) { $b$ };
\end{tikzpicture}
\quad \mapsto \quad
\begin{tikzpicture}[yscale=-1,scale=0.05,
baseline={([yshift=-.5ex]current bounding box.center)}]]
% path id='path3336'
% path spec='m 191.42857,372.3622 0,125.71429 331.42857,0 0,-127.14286 z'
\draw [fill=none,draw=black] (191.43mm,372.36mm)
-- ++(0.00mm,125.71mm)
-- ++(331.43mm,0.00mm)
-- ++(0.00mm,-127.14mm)
-- cycle
;
% path id='path3340'
% path spec='m 274.28571,372.3622 c 0,-142.85714 0,-142.85714 0,-142.85714'
\draw [fill=none,draw=black] (274.29mm,372.36mm)
.. controls ++(0.00mm,-142.86mm) and ++(0.00mm,0.00mm) .. ++(0.00mm,-142.86mm)
;
% path id='path3342'
% path spec='m 445.71429,371.50506 c 0,-141.42857 0,-141.42857 0,-141.42857'
\draw [fill=none,draw=black] (445.71mm,371.51mm)
.. controls ++(0.00mm,-141.43mm) and ++(0.00mm,0.00mm) .. ++(0.00mm,-141.43mm)
;
\draw [fill=black,draw=black] (307.14mm,290.93mm) circle (0.92mm) ;
\draw [fill=black,draw=black] (327.14mm,290.93mm) circle (0.92mm) ;
\draw [fill=black,draw=black] (347.14mm,290.93mm) circle (0.92mm) ;
\draw [fill=black,draw=black] (367.14mm,290.93mm) circle (0.92mm) ;
\draw [fill=black,draw=black] (387.14mm,290.93mm) circle (0.92mm) ;
\draw [fill=black,draw=black] (407.14mm,290.93mm) circle (0.92mm) ;
% path id='path3358'
% path spec='m 348.56192,498.08382 c 4.30473,144.27106 4.30473,144.27106 4.30473,144.27106'
\draw [fill=none,draw=black] (348.56mm,498.08mm)
.. controls ++(4.30mm,144.27mm) and ++(0.00mm,0.00mm) .. ++(4.30mm,144.27mm)
;
% path id='path3376'
% path spec='m 566.26521,412.64808 c 7.1511,-0.48215 10.02854,-12.34905 11.4966,-14.15682'
\draw [fill=none,draw=black] (566.27mm,412.65mm)
.. controls ++(7.15mm,-0.48mm) and ++(-1.47mm,1.81mm) .. ++(11.50mm,-14.16mm)
;
% path id='path3378'
% path spec='m 587.4725,412.64083 c -5.74511,-2.70062 -8.91925,-12.48696 -10.10152,-14.14214'
\draw [fill=none,draw=black] (587.47mm,412.64mm)
.. controls ++(-5.75mm,-2.70mm) and ++(1.18mm,1.66mm) .. ++(-10.10mm,-14.14mm)
;
% path id='path3380'
% path spec='m 263.15065,291.21951 c 7.1511,-0.48215 10.02854,-12.34905 11.4966,-14.15682'
\draw [fill=none,draw=black] (263.15mm,291.22mm)
.. controls ++(7.15mm,-0.48mm) and ++(-1.47mm,1.81mm) .. ++(11.50mm,-14.16mm)
;
% path id='path3382'
% path spec='m 284.35794,291.21226 c -5.74511,-2.70062 -8.91925,-12.48696 -10.10152,-14.14214'
\draw [fill=none,draw=black] (284.36mm,291.21mm)
.. controls ++(-5.75mm,-2.70mm) and ++(1.18mm,1.66mm) .. ++(-10.10mm,-14.14mm)
;
% path id='path3384'
% path spec='m 434.87658,296.27027 c 7.1511,-0.48215 10.02854,-12.34905 11.4966,-14.15682'
\draw [fill=none,draw=black] (434.88mm,296.27mm)
.. controls ++(7.15mm,-0.48mm) and ++(-1.47mm,1.81mm) .. ++(11.50mm,-14.16mm)
;
% path id='path3386'
% path spec='m 456.08387,296.26302 c -5.74511,-2.70062 -8.91925,-12.48696 -10.10152,-14.14214'
\draw [fill=none,draw=black] (456.08mm,296.26mm)
.. controls ++(-5.75mm,-2.70mm) and ++(1.18mm,1.66mm) .. ++(-10.10mm,-14.14mm)
;
% path id='path3388'
% path spec='m 338.91209,572.04192 c 7.1511,-0.48215 10.02854,-12.34905 11.4966,-14.15682'
\draw [fill=none,draw=black] (338.91mm,572.04mm)
.. controls ++(7.15mm,-0.48mm) and ++(-1.47mm,1.81mm) .. ++(11.50mm,-14.16mm)
;
% path id='path3390'
% path spec='m 360.11938,572.03467 c -5.74511,-2.70062 -8.91925,-12.48696 -10.10152,-14.14214'
\draw [fill=none,draw=black] (360.12mm,572.03mm)
.. controls ++(-5.75mm,-2.70mm) and ++(1.18mm,1.66mm) .. ++(-10.10mm,-14.14mm)
;
% path id='path3392'
% path spec='m 402.16883,162.92268 c -31.19072,48.04418 -97.17852,6.73863 -140.71407,39.2619'
\draw [fill=none,draw=black] (402.17mm,162.92mm)
.. controls ++(-31.19mm,48.04mm) and ++(43.54mm,-32.52mm) .. ++(-140.71mm,39.26mm)
;
% path id='path3396'
% path spec='m 402.00537,162.98708 c 34.78551,47.06895 108.37852,6.60184 156.93162,38.46494'
\draw [fill=none,draw=black] (402.01mm,162.99mm)
.. controls ++(34.79mm,47.07mm) and ++(-48.55mm,-31.86mm) .. ++(156.93mm,38.46mm)
;
\node [black] at (425.71mm,100.22mm) { $a^n$ };
\node [black] at (352.86mm,439.51mm) { $f$ };
\node [black] at (350.57mm,690.93mm) { $b$ };
% path id='path3469'
% path spec='m 222.85714,370.93363 c 5.17185,-48.25574 -45.47106,-85.40024 -85.71428,-58.57143 -45.532726,30.35515 -39.720304,194.37895 20,214.28572 55.50274,18.50091 125.61542,1.42857 178.57143,1.42857'
\draw [fill=none,draw=black] (222.86mm,370.93mm)
.. controls ++(5.17mm,-48.26mm) and ++(40.24mm,-26.83mm) .. ++(-85.71mm,-58.57mm)
.. controls ++(-45.53mm,30.36mm) and ++(-59.72mm,-19.91mm) .. ++(20.00mm,214.29mm)
.. controls ++(55.50mm,18.50mm) and ++(-52.96mm,0.00mm) .. ++(178.57mm,1.43mm)
;
% path id='path3475'
% path spec='m 365.71429,525.21935 c 151.53667,14.32979 235.21949,19.3936 207.14285,-177.14286 -6.32134,-44.24943 -35.31714,-76.03429 -30,-118.57143'
\draw [fill=none,draw=black] (365.71mm,525.22mm)
.. controls ++(151.54mm,14.33mm) and ++(28.08mm,196.54mm) .. ++(207.14mm,-177.14mm)
.. controls ++(-6.32mm,-44.25mm) and ++(-5.32mm,42.54mm) .. ++(-30.00mm,-118.57mm)
;
\end{tikzpicture} 
\]
\end{defi}

The following properties of the generalized rotation are immediate:
\begin{enumerate}
\item
$(\rho^{b}_{n,a})^{n}=\theta^{-1}_{b} id$. 
\item $\rho^{b}_{n,a}$ is diagonalizable
\end{enumerate}
The traces of the generalized rotation operators provide a very important invariant of spherical fusion categories:

\begin{defi} \cite[Definition 2.1]{NS2}
For an object $a\in \ca$ in a spherical fusion category we define the \textbf{generalized Frobenius-Schur (GFS) indicators} for $n>0$, $k\in \mathbb{Z}$, and $b\in Z(\ca)$:
\[ \nu^{b}_{n,k}(a):={\rm Tr}((\rho^{b}_{n,a})^{k}). \]
where ${\rm Tr}$ is the usual trace of a linear map.
\end{defi}
\begin{rem*}
A different version of these invariants were also studied in the Hopf algebra setting in \cite[Chapter 8]{SZ} where they are called {\bf equivariant FS indicators}. The two definitions coincide when $b\in \ca$ is the unit object.
\end{rem*}

% Ng and Schauenburg used the GFS indicators to show that the representation of $SL_{2}(\Z)$ given by the $S$ and $T$ matrices factors through a representation of $SL_{2}(\Z_{M})$, where $M$ is the conductor.  The GFS indicators are also an important part of the proof rank finiteness for modular categories in \cite{BNRW}.  

The usual categorical Frobenius-Schur (FS) indicators are obtained in the special case where $b=\mathbb{1}$ and then $\nu_m(a) = \nu^{\mathbb{1}}_{m,1}(a)$. The following proposition enumerates some useful properties of GFS indicators:
\begin{prop}\cite{NS2}
Let $a,d \in \ca$ be objects in a spherical fusion category, let $b,c\in Z(\ca)$, and let $l,m \in \mathbb{N}$. Then:
\begin{enumerate}
\item 
$\nu^{b}_{m,l}(a\oplus d)=\nu^{b}_{m,l}(a)+\nu^{b}_{m,l}(d)$ if $GCD(m,l)=1$ \cite[Corollary 5.5]{NS2}
\item 
$\nu^{b\oplus c}_{m,l}(a)=\nu^{b}_{m,l}(a)+\nu^{c}_{m,l}(a)$ \cite[Remark 2.4]{NS2}
\item 
$\nu^{b}_{m,l}(a)=\nu^{b}_{\frac{m}{g},\frac{l}{g}} (a^{g})$, where $g=GCD(m,l)$ \cite[Proposition 2.8]{NS2}
\item 
If $b$ is irreducible in $Z(\ca)$, then $\nu^{b}_{m,l+km}(a)=\theta^{-k}_{b}\nu^{b}_{m,l}(a)$ \cite[Lemma 2.7]{NS2}
\end{enumerate}
\end{prop}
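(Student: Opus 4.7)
The plan is to establish each of the four properties directly from the diagrammatic definition of $\rho^{b}_{n,a}$ together with two structural facts already collected in the excerpt: (i) $(\rho^{b}_{m,a})^{m}=\theta^{-1}_{b}\cdot\mathrm{id}$, and (ii) the half-braiding of a direct sum object in $Z(\ca)$ is the direct sum of the half-braidings. I would dispatch (2) and (4) first as essentially formal consequences, then (3) by regrouping strands, and leave (1) for last since it requires an invariant-theoretic argument.

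Part (2) is immediate: the decomposition $\Hom_\ca(b\oplus c, a^{\otimes m}) \cong \Hom_\ca(b, a^{\otimes m}) \oplus \Hom_\ca(c, a^{\otimes m})$ is respected by the rotation because the half-braiding and twist of $b\oplus c$ split componentwise, so $\rho^{b\oplus c}_{m,a}$ is block diagonal in this decomposition, and the trace of its $l$-th power splits additively. Part (4) follows from fact (i): one has $(\rho^{b}_{m,a})^{l+km} = \theta^{-k}_{b}\cdot(\rho^{b}_{m,a})^{l}$, and since $b$ is simple $\theta_{b}$ is a scalar, so taking $\mathrm{Tr}$ yields the claim.

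For (3), I would use the identification $a^{\otimes m} = (a^{g})^{\otimes m/g}$ and check graphically that applying the one-strand rotation $\rho^{b}_{m,a}$ exactly $g$ consecutive times produces the one-block rotation $\rho^{b}_{m/g,\, a^{g}}$. The key input is that $\overline{a^{g}} \cong \overline{a}^{\otimes g}$ with $\mathrm{ev}_{a^{g}}$ and $\mathrm{coev}_{a^{g}}$ expressible as nested compositions of the $\mathrm{ev}_{a}$, $\mathrm{coev}_{a}$, and that the half-braiding $e_{b,\overline{a}^{\otimes g}}$ factors through the iterated half-braidings $e_{b,\overline{a}}$ by monoidality. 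Therefore $(\rho^{b}_{m,a})^{l} = (\rho^{b}_{m/g,\, a^{g}})^{l/g}$ as endomorphisms of the same Hom space, and their traces coincide.

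Part (1) is the main obstacle. Expanding $(a \oplus d)^{\otimes m}$ as a direct sum indexed by words $\epsilon \in \{a,d\}^{m}$, the Hom space $\Hom_\ca(b, (a\oplus d)^{\otimes m})$ decomposes accordingly, and the rotation $\rho^{b}_{m,a\oplus d}$ sends the $\epsilon$-component into the $\sigma(\epsilon)$-component, where $\sigma$ is cyclic shift. Thus only words fixed by $\sigma^{l}$ can contribute to $\mathrm{Tr}((\rho^{b}_{m,a\oplus d})^{l})$; when $\gcd(m,l)=1$ the subgroup $\langle \sigma^{l}\rangle$ has order $m$, so the only fixed words are the constant words $a^{m}$ and $d^{m}$. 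The remaining content of the argument is to verify that the restriction of $\rho^{b}_{m,a\oplus d}$ to the $a^{m}$-component equals $\rho^{b}_{m,a}$, and similarly for $d^{m}$; this uses naturality of the half-braiding $e_{b,-}$ together with the fact that $\mathrm{ev}_{a\oplus d}$ and $\mathrm{coev}_{a\oplus d}$ decompose into the evaluations and coevaluations of $a$ and $d$ with no cross terms (because $\Hom_\ca(a,d) = \Hom_\ca(d,a) = 0$ would be false in general, but the constant summands are preserved by the defining inclusions of $a$ and $d$ into $a\oplus d$). Summing the contributions from $a^{m}$ and $d^{m}$ yields the desired additivity, completing the proof; as a sanity check one should compare with the corresponding arguments in \cite{NS2}.
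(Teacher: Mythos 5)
The paper does not prove this proposition itself: each of the four parts is cited directly from Ng and Schauenburg (Corollary 5.5, Remark 2.4, Proposition 2.8, and Lemma 2.7 of \cite{NS2} respectively). So the relevant comparison is with the proofs in \cite{NS2}, not with anything in this paper.

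Your arguments for parts (2), (3), and (4) are correct and are essentially the standard ones. For (2) you use that the direct sum in $Z(\ca)$ splits the half-braiding componentwise, making $\rho^{b\oplus c}_{m,a}$ block diagonal; for (4) you use the stated identity $(\rho^{b}_{m,a})^{m}=\theta^{-1}_{b}\cdot\mathrm{id}$ and that $\theta_b$ is a scalar on a simple object; for (3) you identify $(\rho^{b}_{m,a})^{g}$ with $\rho^{b}_{m/g,\,a^{g}}$ on the shared Hom space via regrouping of strands and monoidality of the half-braiding. These match the arguments in \cite{NS2} in spirit.

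Your proof of part (1) is correct, but it is a genuinely different route from \cite{NS2}. There, Corollary 5.5 is deduced as a corollary of their Theorem 5.4 (which this paper records as $\mathcal{V}_{m,l}=\pi(g)A$): one observes that the induction matrix entry $A_{b,a}=\dim\Hom_\ca(\mathrm{Forget}(b),a)$ is additive in $a$, hence so is every entry of $\pi(g)A$. That argument goes through the nontrivial machinery of the $\SL_2(\Z)$ representation on the center and the equivariant indicator formula. Your argument is more elementary and self-contained: you expand $(a\oplus d)^{\otimes m}$ over words $\epsilon\in\{a,d\}^{m}$, observe that $\rho^{b}_{m,a\oplus d}$ permutes the blocks by a cyclic shift $\sigma$ (because the cup, cap, and half-braiding are all block diagonal with respect to the decomposition $a\oplus d$), and note that when $\gcd(m,l)=1$ only the two constant words are $\sigma^{l}$-fixed, on which the operator restricts to $\rho^{b}_{m,a}$ and $\rho^{b}_{m,d}$. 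This buys you a diagrammatic proof that makes the coprimality hypothesis transparent and does not presuppose Theorem 5.4, at the cost of a careful check that the various structure morphisms really do respect the word decomposition.

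One sentence should be repaired: the parenthetical remark in part (1) about $\Hom_\ca(a,d)=0$ is confusing as written. The correct justification that $\mathrm{coev}_{a\oplus d}$, $\mathrm{ev}_{a\oplus d}$, and $e_{b,\overline{a\oplus d}}$ have no cross terms is not a vanishing-Hom condition (which indeed may fail); it is that, after choosing the dual of $a\oplus d$ to be $\overline{a}\oplus\overline{d}$ with the direct-sum duality data, the coevaluation is by construction $\mathrm{coev}_{a}\oplus\mathrm{coev}_{d}$, and naturality of the half-braiding $e_{b,-}$ in the second variable forces it to commute with the canonical inclusions and projections of the summands, hence to be block diagonal. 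State it that way and the argument is clean.
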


\subsection{Discrete Fourier transform}

Suppose we have a diagonalizable operator $L$ on a finite dimensional vector space, whose eigenvalues are $N^{th}$ roots of unity for some $N$.  We can recover the complete collection of eigenvalues of the operator knowing only the values ${\rm Tr}(L^{k})$ for integers $1\le k\le N$. 

Let $\zeta$ be a primitive $N^{th}$ root of unity. For a vector $x:=(x_{1}, \dots , x_{N})\in \C^{N}$, the {\bf discrete Fourier transform (DFT)}, written $F(x)=(F(x)_{1}, \dots F(x)_{N})\in \C^{N}$, is defined by 
\[F(x)_{k}=\sum^{N}_{m=1} x_{m}\zeta^{m k}.\]
The operator $F$ is invertible, and 
\[F^{-1}(X)_{k}:=\frac{1}{N} \sum^{N}_{m=1} X_{m}\zeta^{-m k}.\]
Let $K_{m}$ be the dimension of the eigenspace of $L$ corresponding to the root of unity $\zeta^{m}$.   Consider the vector $x=(K_{1},\ \dots, K_{N} )$.  
Then since ${\rm Tr}(L^{k})=\sum^{N}_{m=1} K_{m}\zeta^{m k}$, it follows that 
\[F(x)=({\rm Tr}(L), {\rm Tr}(L^{2}), \dots, {\rm Tr}(L^{N})).\]
This implies the $m^{\rm th}$ component of $F^{-1}({\rm Tr}(L), {\rm Tr}(L^{2}), \dots, {\rm Tr}(L^{N}))$ will be the multiplicity $K_{m}$ with which the eigenvalue $\zeta^{m}$ occurs for the operator $L$. Hence the inverse DFT applied to the vector of traces of the powers of $L$ will yield the eigenvalue multiplicity vector. 

\subsection{Eigenvalue multiplicities}

For any {\it simple} object $b\in Z(\ca)$ and {\it any} object $a\in \ca$ all the eigenvalues of the rotation operator $\rho^{b}_{n,a}$ are roots of $\theta^{-1}_{b}$, so re-normalizing by an $n^{th}$-root of $\theta_{b}$ gives an operator whose eigenvalues are all roots of unity.  Thus its eigenspaces are isotypic components for the corresponding cyclic group representation. 

Define the renormalized rotation operator $\kappa^{b}_{n,a}=\theta_{b}^{\frac{1}{n}} \rho^{b}_{n,a}$, where $\theta_{b}^{\frac{1}{n}} $ is an arbitrary (but fixed) root of $\theta_{b}$.  Since $(\kappa^{b}_{n,a})^{n}=1$, its eigenvalues are $n^{th}$ roots of unity, hence we can use the discrete Fourier transform to compute eigenvalue multiplicities of the renormalized rotation.  Note that $Tr((\kappa^{b}_{n,a})^{k})=\theta^{\frac{k}{n}}\nu^{b}_{n,k}(a)$.  Thus if $\omega$ is a $n^{th}$ root of unity, the multiplicity of $\omega$ as an eigenvalue of rotation $\kappa^{b}_{n,a}$  is given by
 \[
\frac{1}{n}\left(\sum^{n}_{k=1}\theta^{\frac{k}{n}}\nu^{b}_{n,k}(a) \omega^{-k}\right)=\frac{1}{n}\left(\sum^{n}_{k=1}\nu^{b}_{n,k}(a) (\theta^{\frac{1}{n}}_{b}\omega^{-1})^{k}\right).
\]
The root $\omega$ occurs in $\kappa$ with the same multiplicity as $\theta^{-\frac{1}{n}}\omega$ appears in $\rho$.  Putting this all together we obtain the following proposition:

\begin{prop} \label{thm:main_theorem}
Let $\ca$ be a spherical fusion category. For any $a\in \ca$, and $b\in\text{Irr}(Z(\ca))$, the eigenvalues of $\rho^{b}_{n,a}$ are $n^{th}$ roots of $\theta^{-1}_{b}$. For such an $\omega$, its multiplicity as an eigenvalue is denoted $P^{b}_{n,a}(\omega^{-1})$ where 
\[P^{b}_{n,a}(x):=\sum^{n-1}_{k=0}\frac{\nu^{b}_{n,k}(a)}{n} x^{k}
\]
\end{prop}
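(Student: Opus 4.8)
The plan is to read the statement off directly from the discrete Fourier transform machinery set up in the previous subsection, applied to the renormalized operator $\kappa^{b}_{n,a}$; the only genuinely new bookkeeping is a reindexing of the resulting sum from $\{1,\dots,n\}$ to $\{0,\dots,n-1\}$.

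First I would note that since $b$ is simple, $\theta_{b}$ is a scalar, so property (1) of the generalized rotation gives $(\rho^{b}_{n,a})^{n}=\theta^{-1}_{b}\,{\rm id}$; together with property (2), diagonalizability, this shows that every eigenvalue $\omega$ of $\rho^{b}_{n,a}$ satisfies $\omega^{n}=\theta^{-1}_{b}$, which is the first assertion. Fixing a root $\theta_{b}^{1/n}$ and putting $\kappa^{b}_{n,a}=\theta_{b}^{1/n}\rho^{b}_{n,a}$, one has $(\kappa^{b}_{n,a})^{n}=1$, and $\omega$ occurs as an eigenvalue of $\rho^{b}_{n,a}$ with exactly the multiplicity with which $\theta_{b}^{1/n}\omega$ occurs for $\kappa^{b}_{n,a}$. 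Next I would use ${\rm Tr}((\kappa^{b}_{n,a})^{k})=\theta_{b}^{k/n}\,{\rm Tr}((\rho^{b}_{n,a})^{k})=\theta_{b}^{k/n}\nu^{b}_{n,k}(a)$ and feed the vector $({\rm Tr}(\kappa^{b}_{n,a}),\dots,{\rm Tr}((\kappa^{b}_{n,a})^{n}))$ into the inverse DFT with $\zeta$ a primitive $n$th root of unity: the multiplicity of an $n$th root of unity $\eta$ as an eigenvalue of $\kappa^{b}_{n,a}$ is $\tfrac1n\sum_{k=1}^{n}\theta_{b}^{k/n}\nu^{b}_{n,k}(a)\eta^{-k}$. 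Substituting $\eta=\theta_{b}^{1/n}\omega$ cancels every $\theta_{b}^{k/n}$ factor and yields multiplicity $\tfrac1n\sum_{k=1}^{n}\nu^{b}_{n,k}(a)\omega^{-k}$ for $\omega$ in $\rho^{b}_{n,a}$. Finally I would reindex: the $k=n$ term is $\nu^{b}_{n,n}(a)\omega^{-n}$, and since $\nu^{b}_{n,n}(a)={\rm Tr}(\theta^{-1}_{b}\,{\rm id})=\theta^{-1}_{b}\nu^{b}_{n,0}(a)$ (equivalently by property (4) of the GFS indicators) while $\omega^{-n}=\theta_{b}$, this equals $\nu^{b}_{n,0}(a)=\nu^{b}_{n,0}(a)\omega^{0}$; so the sum rewrites as $\sum_{k=0}^{n-1}\tfrac{\nu^{b}_{n,k}(a)}{n}(\omega^{-1})^{k}=P^{b}_{n,a}(\omega^{-1})$, as claimed.

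I do not expect a deep obstacle: the substantive content was already isolated in the DFT subsection, and diagonalizability of $\rho^{b}_{n,a}$ — which is what makes the DFT applicable — is given as property (2). The one point meriting a moment's care is well-definedness of the stated formula: it must not depend on the arbitrary choice of $\theta_{b}^{1/n}$, and indeed it does not, because $P^{b}_{n,a}(\omega^{-1})$ involves only the indicators $\nu^{b}_{n,k}(a)$ and powers of $\omega$, with $\omega$ ranging over the roots of $\theta^{-1}_{b}$ independently of that choice; the legitimacy of the index shift is precisely the $k=n$ versus $k=0$ comparison above.
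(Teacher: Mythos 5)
Your proposal is correct and follows essentially the same route as the paper: renormalize to $\kappa^{b}_{n,a}=\theta_{b}^{1/n}\rho^{b}_{n,a}$, apply the inverse discrete Fourier transform to the traces $\theta_{b}^{k/n}\nu^{b}_{n,k}(a)$, and translate back to $\rho^{b}_{n,a}$. Your explicit justification of the reindexing from $k\in\{1,\dots,n\}$ to $k\in\{0,\dots,n-1\}$ (using $\nu^{b}_{n,n}(a)=\theta^{-1}_{b}\nu^{b}_{n,0}(a)$ and $\omega^{-n}=\theta_{b}$) is a correct filling-in of a step the paper leaves implicit.
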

\noindent In general, if the object $b\in Z(\ca)$ is {\it not} irreducible but is semi-simple, we can define 
$$K^{b}_{n,a}(\omega):=\sum_{c\in \text{Irr}(Z(\ca))} \delta(\omega^{n},\theta_{c}^{-1}) \dim(\Hom_{Z(\ca)}(c,b)) P^{c} _{n,a}(\omega^{-1}).$$
where $\delta$ is the Kronecker delta.

%Using the relation $\nu^{b}_{n,k+ln}(a)=\theta^{-l}_{b}\nu^{b}_{n,k}(a)$, we see that
%\[
%K^{b}_{n,a}(\omega):=\frac{1}{nM}\left(\sum^{n-1}_{k=0} \omega^{-k}\nu^{b}_{n,k}(a)\right)\left(\sum^{M-1}_{l=0}(\omega^{n}\theta_{b})^{-l}\right).
%\]
%The equation $(\omega^{n}\theta_{b})^{M}=1$ implies
%\[
%\sum^{M-1}_{l=0}(\theta_{b}\omega^{n})^{-l}=
%\left\{
%\begin{array}{cc}
%0 &\ (\omega)^{n}\ne \theta^{-1}_{b}\\
%M &\ (\omega)^{n}= \theta^{-1}_{b}\\
%\end{array}
%\right\}
%\]
%which gives
%\[
%K^{b}_{n,a}(\omega):=\frac{\delta(\omega^{n},\theta^{-1}_{b})}{n}\left(\sum^{n-1}_{k=0} \omega^{-k}\nu^{b}_{n,k}(a)\right).
%\]
%This proves Proposition \ref{thm:main_theorem}.
 
 \subsection{Computing GFS indicators: more for less}
 
It was shown by Ng-Schauenburg in \cite{NS3},\cite{NS2} that the GFS indicators for a spherical fusion category $\ca$ can be computed from the modular data of the Drinfel'd center $Z(\ca)$ and the $|\Irr(Z(\ca))|\times |\Irr(\ca)|$ matrix $A$ induced from the forgetful functor $\textrm{Forget}:Z(\ca) \to \ca$:
\[
A_{b,a}=\dim(\Hom_\ca({\rm Forget}(b), a))
\]

The full GFS data can be given as a matrix:
\begin{defi}
The {\bf GFS indicator matrix} $\mathcal{V}_{m,l}$ is the $|\text{Irr}(Z(\ca))|\times |\text{Irr}(\ca)|$ matrix defined by: 
\[
(\mathcal{V}_{m,l})_{b,a}=\nu^{b}_{m,l}(a)
\]
for $m>0$ and $l\in \Z$ with $\GCD(m,l)=1$.
\end{defi}
\noindent Let $\pi$ denote the $\SL_2(\mathbb{Z})$ representation given by the modular data of $Z(\ca)$. Let $(m,l)\cdot g$ denote the standard (right) action of $SL_{2}(\Z)$ on $\Z^{\oplus 2}$. Now we have the following useful facts:
\begin{theo}\cite{NS2}

\begin{enumerate}
    \item Let $\GCD(m,l)=1$, and $g\in SL_{2}(\Z)$ such that $(1,0)\cdot g^{-1}=(m,l)$.  Then $\mathcal{V}_{m,l}=\pi(g)A$. \cite[Theorem 5.4]{NS2} 
    \item $\pi$ factors through a finite group. \cite[Theorem 7.1]{NS2}
\end{enumerate}

\end{theo}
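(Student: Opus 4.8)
\emph{The plan} is to treat the two assertions separately: (2) is immediate from facts already recorded above, whereas (1) rests on the Ng--Schauenburg formula expressing (generalized) Frobenius--Schur indicators through the modular data of the Drinfel'd center.

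For (2): by Vafa's theorem the twists of the modular category $Z(\ca)$ are roots of unity, so its $T$-matrix has finite order $M$; as recalled above, the associated representation of $SL_2(\Z)$ then factors through $SL_2(\Z_M)$, which is a finite group, and therefore $\pi$ has finite image. (If one wishes to sidestep the congruence property, it is enough to note that $\pi(T)$ has finite order, that $\pi(S)^2=\pi(C)$ with $\pi(C)^2=\mathrm{id}$, and that all matrix entries of $\pi$ lie in one cyclotomic ring with uniformly bounded archimedean absolute values; but the argument above is cleanest.)

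For (1): the base case $(m,l)=(1,0)$ is direct, since $\rho^b_{1,a}=\theta_b^{-1}\,\mathrm{id}$ on $\Hom_\ca(\mathrm{Forget}(b),a)$ gives $\nu^b_{1,0}(a)=\dim\Hom_\ca(\mathrm{Forget}(b),a)=A_{b,a}$, i.e. $\mathcal{V}_{1,0}=A=\pi(e)A$. Properties (2) and (4) of the preceding Proposition govern how $\mathcal{V}_{m,l}$ depends on $l$ (the diagonal $\hat{T}$-action), so the genuine input required is a closed formula for $\mathrm{Tr}\big((\rho^b_{m,a})^k\big)$ with $\GCD(m,k)=1$ purely in terms of the $\hat{S}$- and $\hat{T}$-matrices of $Z(\ca)$ and the entries of $A$. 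Granting such a formula, one recognizes it as the $(b,a)$-entry of $\pi(g)A$ for any $g$ with $(1,0)\cdot g^{-1}=(m,l)$ by a computation inside the $SL_2(\Z)$-representation $\pi$, using that $SL_2(\Z)=\langle S,T\rangle$ acts transitively on primitive vectors, so that $(1,0)\cdot SL_2(\Z)=\{(m,l):\GCD(m,l)=1\}$; independence of the choice of $g$ is then formal, the ambiguity being by the stabilizer of $(1,0)$ --- the lower unipotent subgroup $\{S^{-1}T^{-c}S:c\in\Z\}$ --- which the formula shows acts trivially on $A$.

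\emph{The main obstacle} is the closed formula itself, i.e. the Ng--Schauenburg modular-data expression for the indicators (\cite{NS3} for the classical case $b=\mathbb{1}$, its equivariant refinement in \cite{NS2} for $b\in\Irr(Z(\ca))$). Diagrammatically, one writes $\mathrm{Tr}\big((\rho^b_{m,a})^k\big)$ as a closed morphism in $\ca$ assembled from $\mathrm{ev}$, $\mathrm{coev}$, the half-braiding of $b$, and $k$-fold cyclic rotations of the $m$ strands labelled $a$; after closing the diagram, the $b$-strand is pulled around the $a$-strands, and, transported to $Z(\ca)$ along the adjunction between $\mathrm{Forget}$ and central induction $\mathcal{I}$, the picture evaluates to a sum over $c\in\Irr(Z(\ca))$ of products of an $\hat{S}$-matrix entry (produced by encircling --- precisely the step that uses modularity of $Z(\ca)$, i.e. invertibility of $S$, together with M\"uger's description of the center), a power of the twist $\theta_c$ (from $\hat{T}$), and an entry $A_{c,a}$; matching this against $(\pi(g)A)_{b,a}$ forces one to track the global-dimension and Gauss-sum normalizations exactly, and to use that $Z(\ca)$ has trivial central charge so that $\pi$ is a genuine (not merely projective) representation of $SL_2(\Z)$. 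This is the technical heart of \cite{NS3}\cite{NS2}; everything above it is routine bookkeeping by comparison.
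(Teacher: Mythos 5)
This theorem is not proved in the paper at all: it is quoted verbatim from Ng--Schauenburg, with the two parts cited as \cite[Theorem 5.4]{NS2} and \cite[Theorem 7.1]{NS2}, so there is no in-paper argument to compare yours against. Judged as a standalone proof, your writeup is a reasonable road map of how \cite{NS2} proceeds, but it does not actually prove either part. For (2), your main argument is circular: ``the representation factors through $SL_2(\Z_M)$'' \emph{is} the congruence subgroup theorem, i.e.\ precisely \cite[Theorem 7.1]{NS2} (indeed its proof in \cite{NS2} runs through the equivariant indicator formula of part (1) and a Galois-twisting argument), so it cannot be ``recalled from above'' as an independent input. Your fallback sketch does not close this gap either: finite order of $\pi(T)$ together with $\pi(S)^2=\pi(C)$ does not bound the group generated, and the entries of the normalized $S$-matrix are cyclotomic numbers, not algebraic integers, so ``bounded archimedean absolute values in one cyclotomic ring'' does not by itself yield finitely many possible matrices without controlling all Galois conjugates.

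For (1), the situation is similar: after the easy observation $\mathcal{V}_{1,0}=A$, you explicitly ``grant'' a closed formula for $\nu^b_{m,l}(a)$ in terms of $\hat S$, $\hat T$ and $A$ --- but that formula is exactly the content of \cite[Theorem 5.4]{NS2}, so nothing substantive is being established. The two remaining steps you describe as formal also lean on the granted formula: well-definedness independent of the choice of $g$ requires showing that the stabilizer of $(1,0)$ acts trivially on $A$ (equivalently, an identity of the shape $\pi(h)A=A$ for the relevant unipotent elements), which in \cite{NS2} is a genuine computation with the induction matrix and the twists of $Z(\ca)$, not a consequence of transitivity of $SL_2(\Z)$ on primitive vectors. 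Your remarks that modularity of $Z(\ca)$ enters through the encircling/killing argument and that the trivial anomaly of a Drinfel'd center linearizes the projective $SL_2(\Z)$-action are correct and do locate the technical heart accurately; but as written the proposal is an annotated citation of \cite{NS2}\cite{NS3} rather than a proof, which is fine only because the paper itself treats the statement the same way.
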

\noindent
This shows that if the modular data and linear structure of the forgetful functor are at hand then all GFS indicators can be computed. Furthermore, only finitely many numbers appear since $\pi$ factors through a finite group.  

The purpose of this section is to show that we can actually compute \textit{all} the GFS indicators from only the ``single rotation'' higher indicators $\nu^{b}_{n,1}(a)$.  Suppose $n\in \N$ and let $\zeta_{n}$ be a primitive $n^{th}$ root of unity. If $\GCD(k,n)=1$, let $\alpha_{k,n}$ denote the Galois automorphism on the cyclotomic field $\mathbb{Q}(\zeta_{n})$, which acts by $\zeta_{n}\to \zeta^{k}_{n}$.  Let $ a\in \ca$, and $b\in \text{Irr}(Z(\ca))$ be fixed, and  pick an $n^{th}$ root of $\theta_{b}$.

\begin{prop}\label{moreforless} For $1\le k\le n-1$
\[
\nu^{b}_{n,k}(a)=\theta^{-\frac{k}{n}}_{b}\alpha_{\frac{k}{g}, \frac{n}{g}}\left(\theta^{\frac{g}{n}}_{b} \nu^{b}_{\frac{n}{g},1}(a^{g}) \right) \quad \text{where $g = \GCD(k,n)$}
\]
\end{prop}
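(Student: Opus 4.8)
The plan is to first reduce to the case $\GCD(k,n)=1$ and then run the same Galois-theoretic observation that underlies the discrete Fourier transform computation above. Writing $g=\GCD(k,n)$, the third of the listed properties of GFS indicators gives $\nu^{b}_{n,k}(a)=\nu^{b}_{n/g,\,k/g}(a^{g})$ with $\GCD(n/g,k/g)=1$. So it suffices to establish the coprime case
\[
\nu^{b}_{n,k}(a)=\theta^{-\frac{k}{n}}_{b}\,\alpha_{k,n}\!\left(\theta^{\frac{1}{n}}_{b}\,\nu^{b}_{n,1}(a)\right)\qquad(\GCD(k,n)=1),
\]
and then substitute $n\mapsto n/g$, $k\mapsto k/g$, $a\mapsto a^{g}$, choosing as the $(n/g)$-th root of $\theta_{b}$ the element $(\theta^{1/n}_{b})^{g}=\theta^{g/n}_{b}$ so that the exponents line up: $\theta^{(k/g)/(n/g)}_{b}=\theta^{k/n}_{b}$ and $\theta^{1/(n/g)}_{b}=\theta^{g/n}_{b}$.

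For the coprime case I would work with the renormalized rotation $L:=\kappa^{b}_{n,a}=\theta^{1/n}_{b}\rho^{b}_{n,a}$. Since $(\rho^{b}_{n,a})^{n}=\theta^{-1}_{b}\,\mathrm{id}$ we get $L^{n}=\mathrm{id}$, and $L$ is diagonalizable because $\rho^{b}_{n,a}$ is; hence the eigenvalues of $L$ are $n$-th roots of unity. Let $K_{j}$ be the multiplicity of $\zeta^{j}_{n}$ as an eigenvalue of $L$, so that ${\rm Tr}(L^{m})=\sum_{j}K_{j}\zeta^{jm}_{n}$ for every $m$, and in particular ${\rm Tr}(L)=\sum_{j}K_{j}\zeta^{j}_{n}\in\Z[\zeta_{n}]$. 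The key observation is that, although neither $\theta^{1/n}_{b}$ nor $\nu^{b}_{n,1}(a)$ need individually lie in $\mathbb{Q}(\zeta_{n})$, their product $\theta^{1/n}_{b}\nu^{b}_{n,1}(a)={\rm Tr}(L)$ does. Because $\alpha_{k,n}$ fixes $\Z$ and sends $\zeta_{n}\mapsto\zeta^{k}_{n}$, applying it to ${\rm Tr}(L)$ yields $\sum_{j}K_{j}\zeta^{jk}_{n}={\rm Tr}(L^{k})$. Finally, $(\kappa^{b}_{n,a})^{k}=\theta^{k/n}_{b}(\rho^{b}_{n,a})^{k}$ gives ${\rm Tr}(L^{k})=\theta^{k/n}_{b}\nu^{b}_{n,k}(a)$ and likewise ${\rm Tr}(L)=\theta^{1/n}_{b}\nu^{b}_{n,1}(a)$, so $\theta^{k/n}_{b}\nu^{b}_{n,k}(a)=\alpha_{k,n}(\theta^{1/n}_{b}\nu^{b}_{n,1}(a))$; multiplying by $\theta^{-k/n}_{b}$ completes the coprime case.

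The one point that demands care — and the only real obstacle — is the legitimacy of applying $\alpha_{k,n}$: one must confirm that $\theta^{1/n}_{b}\nu^{b}_{n,1}(a)$ genuinely lies in $\mathbb{Q}(\zeta_{n})$ (not merely in some larger cyclotomic field), which is precisely what its identification with the trace of the order-$n$ operator $\kappa^{b}_{n,a}$ provides, and one must keep the chosen roots $\theta^{\bullet/n}_{b}$ consistent when passing between the coprime statement and its rescaling via the $\GCD$ reduction. Everything else is bookkeeping with the identities $L^{n}=\mathrm{id}$, ${\rm Tr}(L^{k})=\theta^{k/n}_{b}\nu^{b}_{n,k}(a)$, and the defining action $\zeta_{n}\mapsto\zeta^{k}_{n}$ of the Galois automorphism.
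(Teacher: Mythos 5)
Your proof is correct and follows essentially the same route as the paper's: establish the coprime case by observing that the renormalized rotation $\kappa^{b}_{n,a}=\theta^{1/n}_{b}\rho^{b}_{n,a}$ is diagonalizable of order $n$, so its trace lies in $\mathbb{Z}[\zeta_{n}]$ and the Galois automorphism $\alpha_{k,n}$ carries ${\rm Tr}(\kappa)$ to ${\rm Tr}(\kappa^{k})$, then reduce the general case via $\nu^{b}_{n,k}(a)=\nu^{b}_{n/g,\,k/g}(a^{g})$. Your added bookkeeping about choosing the $(n/g)$-th root of $\theta_{b}$ as $\theta^{g/n}_{b}$ simply makes explicit a point the paper leaves implicit.
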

\begin{proof}
As in the previous section, define the operator $\kappa^{b}_{n,a}=\theta^{\frac{1}{n}}_{b} \rho^{b}_{n,a}$.  Then $\kappa^{b}_{n,a}$ is diagonalizable, and $(\kappa^{b}_{n,a})^{n}={\rm id}$, hence the eigenvalues of $\kappa$ are powers of $\zeta_{n}$, and ${\rm Tr}(\kappa^{l})\in \Z[\zeta]$.  It is clear that if $\GCD(k,n)=1$, then $\theta^{\frac{k}{n}}\nu^{b}_{n,k}(a)={\rm Tr}(\kappa^{k})=\alpha_{k, n}({\rm Tr}(\kappa))=\alpha_{k,n}(\theta^{\frac{1}{n}}\nu^{b}_{n,1}(a))$, hence the formula is true in this case.  It is then easy to see the general result since $\nu^{b}_{n,k}(a)=\nu^{b}_{\frac{n}{g},\frac{k}{g}}(a^{g})$ for $g=GCD(n,k)$.
\end{proof}

\begin{rem} Given a spherical fusion category $\ca$ one can compute the FS indicators from the matrix induced from the central induction functor as well as $T$ for the center. Gannon and Morrison have outlined an effective procedure for finding a finite set of possibilities for the central induction and $T$ matrix pairs for the center using the fusion rule of $\ca$ alone, which often (but not always) yields a unique such pair \cite{GM}.  Given an induction and $T$-matrix pair that does not actually correspond to a categorification of a given fusion rule, there seems to be no reason {\it a priori} why these numbers should live in the right number field suitable for applying the Galois automorphisms in the first place, or that the multiplicities of the eigenvalues should add up to the number as determined by the fusion rules. Understanding this would add an additional layer to the procedure of Gannon and Morrison, which could help to rule out extraneous possibilities of $S$ and $T$ pairs, or even to show that certain fusion rules admit no categorifications.
\end{rem}

\subsection{FS indicator rigidity}\label{sec:fs-rigid}

Let $R$ be a fusion ring.  Ocneanu rigidity says that there are only finitely many categorifications of $R$.  In general, one would like to classify all categorifications. 

\begin{defi}
A fusion ring $R$ exhibits \textbf{FS indicator rigidity} if two spherical categorifications of $R$ having the same FS indicator data implies that they are equivalent as fusion categories.  
\end{defi}

To be more precise, let $R$ be the fusion ring over $\mathbb{Z}$ with canonical basis $R_{B}=\{b_{i}\}^{n}_{i=1}$, and let $\mathcal{C}$ be a spherical categorification of $R$, meaning we have an identification of $\text{Irr}(\mathcal{C})$ with $R_{B}$ that induces a fusion ring homomorphism.  Let $M$ be the order of the $T$ matrix for $Z(\mathcal{C})$.  Recall that the (usual) FS indicators are given by $\nu_{j}(a)=\nu^{\mathbb{1}}_{j,1}(a)$.  Then $\ca$ has the $M\times n$ FS indicator matrix $\mathcal{V}_{\mathcal{C}}=(\nu_{i}(b_{j}))_{i,j}$.  This matrix captures all the information about the higher indicators since the sequence $\nu_{j}(a)$ is $M$-periodic by \cite[Theorem 5.5]{NS3}.  Hence the fusion ring $R$ has FS indicator rigidity if equality of the FS indicator matrices $\mathcal{V}_{\mathcal{C}}=\mathcal{V}_{\mathcal{D}}$ for two spherical categorifications $\mathcal{C}$ and $\mathcal{D}$ of $R$ implies that $\mathcal{C}\cong_{\otimes} \mathcal{D}$.

Several families of fusion rings have been shown to exhibit indicator rigidity. One well-known example are the Tambara-Yamagami fusion rings \cite{BJ}. Richard Ng has asked whether all fusion rings generated by a simple object exhibit indicator rigidity, but we present a negative answer here from quadratic categories. Recall the {\it Haagerup fusion ring}: 

\[
H = \mathbb{Z}[\mathbb{1}, g, g^{2}, \rho, \rho g, \rho g^{2}]
\] 
with mulitplication given by
\[
g^{3}=\mathbb{1}, \quad g\rho=\rho g^{2}, \quad \textrm{and} \quad \rho^{2}=\mathbb{1}+\rho+\rho g+\rho g^{2}
\]

\begin{prop} The fusion ring $H$ does not exhibit FS indicator rigidity.
\end{prop}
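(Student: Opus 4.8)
The plan is to exhibit two spherical categorifications of $H$ that are inequivalent as fusion categories yet carry identical higher Frobenius--Schur data. The two we use are the Haagerup fusion category $\mathcal{H}$ --- the even part of the Haagerup subfactor, equivalently the $\mathbb{Z}_3$-Haagerup--Izumi category produced from a Cuntz algebra --- and its \emph{complex conjugate} $\overline{\mathcal{H}}$, obtained by conjugating every associativity constraint (equivalently, every $6j$-symbol). Both are unitary, hence spherical, and $K_0(\overline{\mathcal{H}})=K_0(\mathcal{H})=H$ as rings; indeed Izumi's solution of the pentagon equations for this fusion ring yields, up to gauge, exactly this conjugate pair. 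So the proposition reduces to two claims: (i) $\mathcal{H}\not\cong_{\otimes}\overline{\mathcal{H}}$; and (ii) for suitable fusion-ring identifications the FS indicator matrices $\mathcal{V}_{\mathcal{H}}$ and $\mathcal{V}_{\overline{\mathcal{H}}}$ coincide.

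Step (ii) I would handle formally. Duality $x\mapsto\overline{x}$ is a fusion-ring automorphism of the commutative ring $H$: it interchanges $g$ and $g^{2}$ and fixes $\mathbb{1},\rho,\rho g,\rho g^{2}$. Conjugating all the constraints conjugates every morphism-space computation, so $\nu^{\overline{\mathcal{H}}}_{n}(x)=\overline{\nu^{\mathcal{H}}_{n}(x)}$ for every simple $x$ and every $n$. Reflecting the rotation diagram gives the standard identity $\nu_{n}(\overline{x})=\overline{\nu_{n}(x)}$: for $b=\mathbb{1}$ the eigenvalues of $\rho_{n,x}$ lie on the unit circle and $\rho_{n,\overline{x}}$ is conjugate to $\rho_{n,x}^{-1}$, so $\mathrm{Tr}(\rho_{n,\overline{x}})=\overline{\mathrm{Tr}(\rho_{n,x})}$. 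Composing, $\nu^{\overline{\mathcal{H}}}_{n}(\overline{x})=\overline{\nu^{\mathcal{H}}_{n}(\overline{x})}=\nu^{\mathcal{H}}_{n}(x)$. Hence, labelling $\mathrm{Irr}(\mathcal{H})$ by the canonical basis of $H$ in the natural way and labelling $\mathrm{Irr}(\overline{\mathcal{H}})$ by composing that with the duality automorphism, the two $M\times 6$ FS indicator matrices --- both $M$-periodic, where $M$ is the common order of the $T$-matrices of $Z(\mathcal{H})$ and $Z(\overline{\mathcal{H}})\cong\overline{Z(\mathcal{H})}$ --- agree entry by entry, giving $\mathcal{V}_{\mathcal{H}}=\mathcal{V}_{\overline{\mathcal{H}}}$.

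The main obstacle is step (i): one must know that $\mathcal{H}$ is genuinely \emph{chiral}, i.e.\ not monoidally equivalent to $\overline{\mathcal{H}}$ even after relabelling by the order-$2$ automorphism of $H$. I would argue via the center: a monoidal equivalence $\mathcal{H}\cong_{\otimes}\overline{\mathcal{H}}$ induces a braided (indeed ribbon) equivalence $Z(\mathcal{H})\cong Z(\overline{\mathcal{H}})\cong\overline{Z(\mathcal{H})}$, hence a permutation of $\mathrm{Irr}(Z(\mathcal{H}))$ identifying the modular data $(S,T)$ of $Z(\mathcal{H})$ with its conjugate $(\overline{S},\overline{T})$; one then checks from the explicit modular data of the Haagerup double (Izumi; Evans--Gannon; Hong--Izumi) that no such permutation exists. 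Alternatively one may simply cite that the inequivalence of $\mathcal{H}$ and $\overline{\mathcal{H}}$ is already on record in the subfactor literature. Granting (i), the pair $(\mathcal{H},\overline{\mathcal{H}})$ witnesses the failure of FS indicator rigidity for $H$.
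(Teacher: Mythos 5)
Your step (ii) (conjugation plus the duality relabelling preserves the indicator matrix) is essentially correct, but the crux of your argument, step (i), has a genuine gap, and it is exactly where you diverge from the paper. The paper does not use the conjugate pair $(\mathcal{H},\overline{\mathcal{H}})$: it invokes the two monoidally \emph{inequivalent} unitary categorifications of $H$ constructed by Izumi (the ``standard'' even part of the Haagerup subfactor and a second categorification in its Morita equivalence class), whose inequivalence is an established theorem proved by non-indicator methods, and whose centers share the same modular data and even the same induction matrices; equality of the (generalized) indicators then follows from Evans--Gannon together with the third author's result that the indicators are determined by the $\mathbb{Z}_{13}$ quadratic form. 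Your proposed certificate of chirality fails concretely: the modular data of $Z(\mathcal{H})$ --- displayed in Section 4 of this very paper --- \emph{is} equivalent to its complex conjugate. The $S$-matrix there is real, and the permutation $(5\,6)(7\,11)(8\,9)(10\,12)$ of the simple objects sends $T$ to $\overline{T}$ while preserving $S$; conceptually this happens because $-1\equiv 5^{2}\pmod{13}$ is a quadratic residue, so the quadratic form on $\mathbb{Z}_{13}$ governing the double is isometric to its conjugate (and the two $e^{\pm 2\pi i/3}$ twists simply swap). Hence no permutation check on $(S,T)$ can show $Z(\mathcal{H})\not\simeq \overline{Z(\mathcal{H})}$, let alone $\mathcal{H}\not\simeq_{\otimes}\overline{\mathcal{H}}$.

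The fallback citation is also unavailable: the inequivalence of $\mathcal{H}$ and $\overline{\mathcal{H}}$ is not on record, and the known uniqueness results point the other way --- uniqueness of the Haagerup standard invariant with its principal graphs (equivalently, uniqueness up to equivalence of Izumi's $\mathbb{Z}/3$ Cuntz-algebra solution, contrary to your parenthetical claim that his equations produce a genuinely distinct conjugate pair) forces the Haagerup planar algebra to be isomorphic to its complex conjugate, so that $\mathcal{H}\simeq_{\otimes}\overline{\mathcal{H}}$ after relabelling by duality. Thus your proposed pair almost certainly fails to witness non-rigidity at all. There is also a structural reason your strategy was bound to struggle: since FS indicators are computed from the center's modular data and the forgetful/induction matrix, a pair that could only be distinguished by information invisible to the center can never be certified inequivalent by indicator-type or modular-data arguments; the paper sidesteps this by \emph{citing} an inequivalence established by entirely different means, which is the one ingredient your proof is missing.
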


\begin{proof}
Izumi constructed two inequivalent unitary categorifications of this ring and found their Drinfel'd centers and induction matrices. Evans and Gannon found that the Drinfel'd centers of categories with this type of fusion ring (i.e. {\it Haargerup-Izumi fusion categories}) have modular data given by a {\it metric group}, that is a finite abelian group with a non-degenerate quadratic form. For the case of categories with ring $H$ the group is $\mathbb{Z}_{13}$; see \cite[\S 6.3]{EG15}. In, \cite{T}, the third author demonstrates that the FS indicator data is entirely determined by the quadratic form, and from \cite[Theorem 3(ii)]{EG15} we see that Izumi's two unitary categorifications correspond to the {\it same} quadratic form.  In fact, not only are the indicators the same, but the generalized indicators are the same: not only are the modular data for the centers of the two categorifications equivalent, but the $A$ matrices from their forgetful functors are also equal.
\end{proof}

This counterexample also demonstrates that even the stronger condition of asking for all \textit{generalized} FS indicator data to be the same is not sufficient to establish a similar rigidity property for general fusion rings even for GFS indicators.  

\subsection{Annular representation theory}

While indicator rigidity is not true in general, an interesting question remains in determining criteria on a fusion ring which would imply indicator rigidity. In this direction we have determined a weaker rigidity property satisfied by {\it unitary} (or $C^*$) fusion categories that categorify a {\it singly-generated} fusion ring. Restricting to the unitary setting allows us to utilize Jones's theory of {\it planar algebras}.

The celebrated Jones polynomial knot invariant can be obtained from skein theory as well as the representation theory of quantum groups \cite{ting}. These two methods rely on a common category called the {\bf Temperley-Lieb-Jones category (or algebroid)} and denoted $\mathcal{TLJ}(\delta)$. In the setting of quantum groups this is the tensor category of finite-dimensional representations of the quantum group $U_q(\mathfrak{sl}_2)$ where $\delta = q + q^{-1}$. In the setting of planar algebras $\mathcal{TLJ}(\delta)$ is obtained as the semisimple idempotent completion of the Temperley-Lieb algebroid, which has a familiar diagrammatic description due to Kauffman \cite{kauf}. Under this construction the simple objects of $\mathcal{TLJ}(\delta)$ are the well-known Jones-Wenzl idempotents. This realization is a refinement of a category of skein modules; see \cite{tur} for details.

For our purposes the simple objects of $\mathcal{TLJ}(\delta)$ are just natural numbers $n\in\mathbb{N}$, and morphism spaces $\Hom_{\mathcal{TLJ}}(n,m)$ are spanned by rectangular diagrams with $n$ bottom points and $m$ top points with non-crossing arcs pairing the points, i.e. Temperley-Lieb diagrams on $n+m$ points. Composition of morphisms is by stacking diagrams, and closed circles evaluate to the number $\delta$.  The category $\mathcal{TLJ}(\delta)$ is semisimple unless $\delta\in \{2\cos{\frac{\pi}{n}}: n\ge 3\}$.  From here on we will assume $\delta$ is a semisimple value. This category is universal in the sense that given any unitary rigid tensor category $\ca$ generated by a symmetrically self-dual object there exists a unique dominant dimension-preserving monoidal functor $\mathcal{TLJ}(\delta) \to \ca$.

%We note that ${\bf TLJ}(n+m, k)$ can be consider naturally as a ${\bf TLJ}(n,n)\mymathhyphen{\bf TLJ}(m,m)$ bimodule (draw a picture)

%Now we consider the tensor category $Rep(TLJ)=Func (TLJ, Vec)$ with the Day convolution product.  More explicitly, a representation $V$ assigns a vector space $V_{n}$ to each natural number, and for each hom $f\in TLJ(n,m)$ a linear map $V(f):V_{n}\rightarrow V_{m}$.  The tensor product on this category is defined by $(V\otimes W)_{k}:=\bigoplus_{n,m} V_{n}\otimes_{TLJ(n,n)} TLJ_{n+m, k}\otimes _{TLJ(m,m)} W_{m}$.  This carries a natural action of $TLJ$ on the middle tensor component.  This makes $Rep(TLJ)$ into a tensor category (associativity is easy to check).  The monoidal unit is given by the representation $\mathbb{1}_{n}:=TLJ(0, n)$ with the obvious $TLJ$ action.  Actually, an easy application of the Yoneda lemma shows that $Rep(TLJ)\cong_{\otimes} Ind-TLJ^{op}$.

Note that $\mathcal{TLJ}(\delta)$ is {\it not} a fusion category since there are infinitely many simple objects, hence we lose much of the good behavior of the Drinfel'd center. There have been several approaches to understanding half-braidings. For this purpose Jones introduced the {\bf annular Temperley-Lieb category (or algebroid)} $\mathcal{A}nn\mathcal{TL}(\delta)$. Its irreducible unitary modules yield half-braidings for objects in $\mathcal{TLJ}(\delta)$.

In the annular Temperley-Lieb algebroid objects are again given by natural numbers and morphisms are given by the span of \textit{annular} Temperley-Lieb diagrams. These are diagrams on an annulus with $n$ points on the inner boundary and $m$ points on the outer boundary. (For formal definitions see \cite{JR} and \cite{Jonann}, and note that this object is called the {\it affine} Temperley-Lieb algebroid in \cite{Jonann} with a likewise change to the notation.)  This category is quite far away from being semi-simple and it is not even a tensor category.  Its representation category, while still being non-semi-simple, is a braided tensor category; see \cite{DGG}.

A more purely categorical approach to understanding this representation category is provided by Neshveyev and Yamashita in \cite{NY}. They show that the representation category for the annular Temperley-Lieb algebroid is equivalent to the Drinfel'd center of the {\bf ind-category}:

\begin{defi}\cite[\S 2.2]{NY}
Let $\ca$ be a semisimple unitary tensor category. The category ${\rm ind}-\ca$ is given as follows:
\begin{itemize}
\item {\it objects:} inductive systems in $\ca$
\[
x_* = \{u_{ji}\in \Hom_\ca(x_i, x_j)\}_{i<j} 
\]
where the $u_{ji}$ are isometries.
\item {\it morphisms $t: x_* \to y_*$}: collections of morphisms in $\ca$
\[
t = \{ t_{ki}\in \Hom_\ca(x_i,y_k) \}
\]
such that
\[
t_{kj}u_{ji} = t_{ki} \quad {\rm if} \; i<j
\]
\[
v^*_{lk}t_{li}=t_{ki} \quad {\rm if} \; k<l
\]
\[
\sup_{k,i} ||t_{ki}|| < \infty
\]
\end{itemize}
An object $v$ of ${\rm ind} - \ca$ is called \textbf{locally finite} if $v\in \text{ind-}\ca$ satisfies $\text{dim}(\Hom_{\text{ind}-\ca}(a, v))<\infty$ for all $a\in \ca$. 
\end{defi}

Jones's annular representation category is equivalent to $Z({\rm ind} - \mathcal{TLJ}(\delta))$ as braided tensor categories. Using the annular approach Jones and Jones-Reznikoff establish a classification of unitary locally-finite irreducible representations for the representation category of the annular (also called ``affine'' in \cite{JR}) Temperley-Lieb algebroid. This provides us the following: 
 
 %the forgetful functor $F: Z({\rm Ind} \mymathhyphen {\bf TLJ}^{\rm op}) \rightarrow {\rm Ind} \mymathhyphen {\bf TLJ}^{\rm op}$ is precisely the restriction of an annular category rep to a category rep.
  
\begin{prop}\cite{Jonann, JR}
An irreducible locally-finite unitary object 
\[
v_* \in Z({\rm ind} - \mathcal{TLJ}(\delta))
\]
is classified up to isomorphism by the following data:
\begin{enumerate}
\item
A number $n\ge 0$ called the \textit{lowest weight} of $v_*$ such that $v_{k}=0$ for $k<n$ and $v_{n}\ne 0$
\item
For $n>0$: a scalar $\omega\in S^{1}$ (which is the rotation eigenvalue)\\ For $n=0$: a number $0\le \mu \le \delta^{2}$ (which is the eigenvalue of the double closed loop)
\end{enumerate}
\end{prop}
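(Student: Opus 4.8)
The plan is to carry out a lowest-weight analysis of annular modules, reducing the classification to the (unitary) representation theory of a commutative algebra generated by a single rotation, in the spirit of the affine Temperley--Lieb cell modules.

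First I would isolate the \emph{lowest weight}. Since $v_*$ is nonzero and locally finite, the set of $k \in \mathbb{N}$ with $\Hom_{{\rm ind}-\mathcal{TLJ}}(k, v_*) \neq 0$ is nonempty; let $n$ be its minimum and set $W := \Hom(n, v_*)$, a nonzero finite-dimensional Hilbert space. The structural point is that any annular $(n,n)$-diagram with a cap on the inner boundary---equivalently, one with strictly fewer than $n$ through-strings---annihilates $W$, since it factors through the object $k < n$ and $\Hom(k, v_*) = 0$ by minimality. Consequently the action of $\End_{\mathcal{A}nn\mathcal{TL}}(n)$ on $W$ factors through the quotient $B_n$ by the ideal spanned by diagrams with fewer than $n$ through-strings. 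I would then identify $B_n$ explicitly: for $n > 0$ the reduced annular $(n,n)$-diagrams are classified by their winding number, so $B_n \cong \mathbb{C}[\mathbb{Z}]$, generated by the elementary rotation, which is unitary for the $*$-structure; for $n = 0$ the only annular $(0,0)$-diagrams are disjoint unions of contractible loops (value $\delta$) and non-contractible loops, so $B_0 \cong \mathbb{C}[x]$ with $x$ the class of a single non-contractible loop and $x$ self-adjoint.

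Next I would invoke irreducibility. If $W_0 \subseteq W$ is a nonzero $B_n$-submodule, the annular submodule of $v_*$ it generates is nonzero with lowest weight space exactly $W_0$ (diagrams with fewer than $n$ through-strings act by $0$ on $W_0$, the rest by $B_n$), so simplicity of $v_*$ forces $W_0 = W$; thus $W$ is a unitary simple $B_n$-module. Since $B_n$ is commutative, such a module is one-dimensional: for $n > 0$ it is a unitary scalar $\omega \in S^1$ for the elementary rotation (the rotation eigenvalue), and for $n = 0$ it is a real scalar for $x$, hence a scalar $\mu \geq 0$ for the double non-contractible loop $x^2$; positivity of the $*$-structure (Cauchy--Schwarz comparing a single loop to the empty diagram, using the loop value $\delta$) gives the bound $\mu \leq \delta^2$. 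Conversely, every such one-dimensional $B_n$-module extends to an irreducible locally-finite unitary object, uniquely: an annular module morphism is determined by its restriction to the lowest weight space---the algebroid is generated by through-strings together with cups and caps, whose action is then forced---which gives uniqueness; for existence one induces the given $B_n$-module up the algebroid, forms the canonical sesquilinear form, and passes to the quotient by its radical.

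The step I expect to be the main obstacle is this last one: proving that the induced annular module is \emph{locally finite} and that the canonical form is positive semidefinite \emph{precisely} on the stated parameter ranges. Local finiteness reduces to the combinatorial count of reduced annular $(n,m)$-diagrams being finite for each $m$, with the multiplicities $\dim v_m$ controlled by Chebyshev-type recursions in $\delta$; positivity is the content of the Jones--Reznikoff Gram-matrix computation \cite{JR}, which shows the unitarity constraint is exactly $\omega \in S^1$ for $n>0$ and $0 \le \mu \le \delta^2$ for $n = 0$, rather than anything weaker. Everything else is formal lowest-weight theory.
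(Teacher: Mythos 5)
The paper itself offers no proof of this proposition: it is imported from \cite{Jonann} and \cite{JR}, so the relevant comparison is with those sources, and your outline is essentially their argument. Passing to the lowest weight space, noting that annular diagrams with fewer than $n$ through-strings annihilate it, identifying the quotient algebra (the Laurent algebra of the one-click rotation for $n>0$, the polynomial algebra on the essential circle for $n=0$, using that isotopies fix the boundary so winding is detected), deducing one-dimensionality from irreducibility plus commutativity, and then getting existence and uniqueness by inducing and dividing by the radical of the canonical sesquilinear form is exactly the lowest-weight scheme of Jones and Jones--Reznikoff; you also correctly locate the substantive input in the Gram-matrix positivity computations of \cite{JR}, which determine the unitarizable parameter range.

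The one step that needs more than you give it is the weight-$0$ parametrization. Your own argument produces, as the natural invariant, the eigenvalue $\lambda$ of the \emph{single} non-contractible loop, a self-adjoint scalar with $|\lambda|\le\delta$ (the clean way to get the bound is the $2\times 2$ Gram matrix of the two cups $0\to 2$, whose off-diagonal entry is $\lambda$ and whose diagonal is $\delta$), and you then silently replace it by $\mu=\lambda^{2}$. That replacement is exactly what has to be justified: one must show either that unitarity forces $\lambda\ge 0$, or that the modules with parameters $\lambda$ and $-\lambda$ are isomorphic, and neither is obvious --- indeed there is a $*$-automorphism of the annular category (rescale a tangle by $(-1)$ raised to the number of its intersections with a fixed radial arc joining the basepoints) which twists a unitary weight-$0$ module with parameter $\lambda$ into one with parameter $-\lambda$, so both signs genuinely occur among unitary modules and the sign question must be confronted head on. In the shaded (subfactor) annular Temperley--Lieb category in which \cite{JR} actually work, the issue evaporates: a single essential circle reverses the shading, hence is not an endomorphism of the weight-$0$ object; the generator of the weight-$0$ endomorphisms is the \emph{double} loop, which has the form $y^{*}y$, so $\mu\ge 0$ is automatic and $\mu\le\delta^{2}$ is the norm bound --- this is why the proposition is phrased in terms of the double loop. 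In the unshaded category used in this paper you should either supply the argument identifying $\pm\lambda$ (or restricting the range of $\lambda$) or state the weight-zero datum as $\lambda$ itself; apart from this point, and the standard caveat that local finiteness and positivity of the induced module are exactly the \cite{JR} computations you cite, your outline is sound.
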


Every locally-finite unitary object of $Z({\rm ind}-\ca)$ is semi-simple \cite{Jonann}, i.e. every such object is isomorphic to a (possibly infinite) direct sum of simple obects. (Note that at each weight space only finitely many such simple objects can appear by local finiteness.) By the proposition above, each simple $v_{n}$ for $n>0$ is classified up to isomorphism by the set of eigenvalues of the rotation operator.  The only exception is the weight $0$ part, where there is no rotation.  This gives us the following corollary:

%In fact, every representations can be decomposed $V\cong \bigoplus_{n} V_{n}$, where each $V_{n}$ is a lowest weight representation of ${\bf ATLJ}$.  This means that ${\bf ATLJ}(n,m)V_{n}=0$ for $m<n$.  

\begin{cor}\label{cor:eigenclass}
Locally finite unitary objects $v_*\in Z({\rm ind}-\mathcal{TLJ}(\delta))$ with a fixed weight $0$ component $v_{0}$, are classified up to isomorphism by the set of eigenvalues $\Omega_{n}$ of the rotation operator on each $v_{n}$ for $n>0$.
\end{cor}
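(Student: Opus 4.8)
The plan is to read this off from the two facts just recorded: the classification of simple objects in the preceding proposition, and the semisimplicity of locally finite unitary objects in $Z({\rm ind}-\mathcal{TLJ}(\delta))$ \cite{Jonann}. First I would decompose a given locally finite unitary object $v_*$ into a direct sum of \emph{simple} locally finite unitary objects. Local finiteness guarantees that each weight space $v_m$ is finite dimensional and that only finitely many isomorphism types of simple summand are supported at a given weight, so the multiplicity of each simple summand — and hence the rotation spectrum $\Omega_m$ on $v_m$, recording eigenvalues with multiplicities — is well defined. Grouping the summands by their lowest weight $n\ge 0$ yields $v_*\cong\bigoplus_{n\ge 0}U^{(n)}$, where $U^{(n)}$ is the sum of those simple summands of lowest weight exactly $n$. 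By the preceding proposition, $U^{(0)}$ is assembled from simples classified by the scalar $0\le\mu\le\delta^2$ giving the eigenvalue of the double closed loop, so $U^{(0)}$ is determined up to isomorphism by the weight $0$ component $v_0$ together with the action of the double closed loop on it; this is exactly the fixed datum of the statement.

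The substance of the argument is to recover the remaining pieces $U^{(n)}$, $n>0$, and I would do this by induction on $n$. Since rotation preserves each weight space, the rotation operator on $v_n$ respects the decomposition $v_n=\bigoplus_{k}(U^{(k)})_n$, the sum being over those $k\le n$ with $k\equiv n\pmod 2$. By the inductive hypothesis, $v_0$ and $\Omega_1,\dots,\Omega_{n-1}$ determine the simple summands making up $U^{(k)}$ for each $k<n$, and therefore also determine the subspace $(U^{(k)})_n$ together with the way rotation acts on it — here one uses that a simple of lowest weight $k$ is pinned down by its classifying rotation eigenvalue, so that its entire rotation spectrum, on all of its weight spaces rather than merely the lowest one, is determined; this is contained in the explicit module descriptions of \cite{Jonann,JR}. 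Removing these already-known contributions from $\Omega_n$ isolates the rotation spectrum of $(U^{(n)})_n$, and since rotation acts on the lowest weight space of a lowest weight $n$ simple as multiplication by its classifying scalar $\omega$, this recovers the multiset of $\omega$'s occurring in $U^{(n)}$, hence $U^{(n)}$ itself. Conversely, an isomorphism $v_*\cong v_*'$ plainly identifies the weight $0$ components and all of the spectra $\Omega_n$. Thus, among locally finite unitary objects sharing a fixed $v_0$, the assignment $v_*\mapsto\{\Omega_n\}_{n>0}$ is a complete invariant.

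I expect the one genuinely delicate point to be the disentangling in the inductive step: one needs the action of rotation on the \emph{higher} weight spaces of a single lowest weight $k$ simple, not just on its lowest weight space, and one needs to know that this action is fully determined by the classifying data of the proposition and not by additional unspecified parameters. This is precisely what the structure theory of the lowest weight modules in \cite{Jonann,JR} supplies, so in practice the step reduces to quoting that description with care; the remainder is bookkeeping with finite dimensional eigenspaces.
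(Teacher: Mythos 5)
Your proof is correct and takes essentially the same route as the paper, which obtains the corollary directly from the two facts you cite: every locally finite unitary object is a direct sum of simples, and each simple of lowest weight $n>0$ is determined by its rotation eigenvalue (the weight $0$ part being the fixed datum). Your inductive step — stripping off the rotation spectra contributed at weight $n$ by simples of lower lowest weight, using the explicit module structure from \cite{Jonann,JR}, and thereby recovering the multiplicities (i.e.\ reading $\Omega_n$ as the spectrum with multiplicity, which is the intended reading) — is simply a more detailed filling-in of the argument the paper leaves implicit.
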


\subsection{Annular FS indicator rigidity}

Now, again let $\ca$ be a spherical fusion category. We now give the definition of Jones's planar algebras in categorical terms:
\begin{defi}
Let $x\in \ca$ such that $d_x\geq 2$ and define:
\[
P^x_n := \Hom_\ca (\mathbb{1},(x\otimes \overline{x})^{n})
\]
Then the spherical structure makes the system $P^x_*$ a {\it commutative algebra object} in $Z({\rm ind} - \mathcal{TLJ}(\delta))$ called the {\bf planar algebra} for $x\in \ca$. (Note that $\delta=d_x$; the algebra structure is given by the map $ev_x$ in $\ca$.)
\end{defi}

Now, suppose $R$ is a fusion ring and $\ca$ and $\mathcal{D}$ are categorifications of $R$. Fix specified isomorphisms of  $\ca$ and $\mathcal{D}$ to $ R$.  We say $a\sim b$ if $[a]= [b]\in R$ for $a\in \ca$ and $b\in \mathcal{D}$.  Note that in particular, $a\sim b$ implies they have the same dimension.

\begin{theo}\label{annFSrigid} \emph{{\bf (Annular FS indicator rigidity)}} Suppose $R$ is a fusion ring with two spherical categorifications $\ca$ and $\mathcal{D}$ with equivalent FS indicator data, i.e. such that $\mathcal{V}_{\mathcal{C}}=\mathcal{V}_{\mathcal{D}}$.  If $a\in \ca$ and $b\in \mathcal{D}$ with $a\sim b$, and $d_{a}\ge 2$, then $P^{a}_*$ and $P^{b}_*$ are isomorphic as \emph{\underline{objects}} in $Z({\rm ind} - \mathcal{TLJ}(\delta)) \cong Z({\rm ind} -Rep(U_q(\mathfrak{sl}_2)))$.
\end{theo}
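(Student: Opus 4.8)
The plan is to reduce the statement to the classification of locally finite unitary objects in $Z(\mathrm{ind}-\mathcal{TLJ}(\delta))$ provided by Corollary~\ref{cor:eigenclass}. That corollary tells us that such an object is determined, as an object, by (i) its weight-$0$ component and (ii) for each $n>0$, the set of eigenvalues $\Omega_n$ of the rotation operator on the weight-$n$ space. So to prove $P^a_* \cong P^b_*$ as objects it suffices to check that these two pieces of data agree for $P^a_*$ and $P^b_*$. Since $a\sim b$ we have $d_a = d_b =: \delta$, so both planar algebras live in the \emph{same} category $Z(\mathrm{ind}-\mathcal{TLJ}(\delta))$; this is the first thing I would record.

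\textbf{Weight-$0$ component.} The weight-$0$ space of $P^a_*$ is $P^a_0 = \Hom_\ca(\mathbb{1},\mathbb{1}) \cong k$ for both (one-dimensional, since $\mathbb{1}$ is simple), so the weight-$0$ components coincide on the nose. That disposes of (i).

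\textbf{Eigenvalues at weight $n>0$.} Here is the heart of the argument. The weight-$n$ space of the planar algebra $P^a_*$, as a vector space, should be identified with $\Hom_\ca(\mathbb{1}, (a\otimes\overline a)^{n}) \cong \Hom_\ca(\overline{a}^{\,\otimes n}, a^{\otimes n})$, or after another fold with the appropriate hom-space onto which the annular rotation acts; and under the Neshveyev--Yamashita equivalence the annular rotation on this weight space is conjugate to a generalized rotation operator $\rho^{b}_{n,\cdot}$ summed over the irreducible $b\in Z(\ca)$ appearing in the relevant induced object (namely $\mathcal{I}(\mathbb{1})$, the image of the unit under central induction, whose half-braiding controls the annular structure). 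The set of eigenvalues appearing is therefore governed by the numbers $K^{b}_{n,a}(\omega)$ — the eigenvalue-multiplicity data assembled in the displayed formula following Proposition~\ref{thm:main_theorem} — which by Proposition~\ref{thm:main_theorem} and Proposition~\ref{moreforless} are computed entirely from the GFS indicators $\nu^{c}_{m,1}$ together with the fusion-ring data (the multiplicities $\dim\Hom_{Z(\ca)}(c,b)$ and the twists $\theta_c$, both encoded by the modular data / induction matrix of the center). The key point is then: \emph{all of this input is recorded in the FS indicator matrix}. By the Remark in \S\ref{sec:fs-rigid} and Theorem~\ref{thm:main_theorem}(1), the full GFS indicator matrices $\mathcal V_{m,l}$ are determined by the single FS indicator matrix $\mathcal V_\ca$ (via the $SL_2(\Z)$-action and Galois automorphisms), and the fusion-ring isomorphism $R\cong K_0(\ca)\cong K_0(\mathcal D)$ identifies the remaining combinatorial data; hence $\mathcal V_\ca = \mathcal V_\mathcal{D}$ forces the eigenvalue sets $\Omega_n$ for $P^a_*$ and $P^b_*$ to coincide for every $n>0$. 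Applying Corollary~\ref{cor:eigenclass} then gives the object isomorphism $P^a_* \cong P^b_*$, and the fact that it need not respect the commutative algebra structure is exactly the content of the earlier counterexample, so no further claim is made.

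\textbf{The main obstacle} is the middle step: correctly identifying the annular rotation operator on the weight-$n$ part of $P^a_*$ with (a direct sum of) the categorical generalized rotation operators $\rho^{b}_{n,a}$ under the Neshveyev--Yamashita equivalence $Z(\mathrm{ind}-\mathcal{TLJ}(\delta))\cong Z(\mathrm{ind}-\ca)$ and the universal functor $\mathcal{TLJ}(\delta)\to\ca$. One must check that the image of $\mathbb{1}\in\mathrm{ind}-\mathcal{TLJ}(\delta)$ (whose center object carries the annular structure) maps to $\mathcal I(\mathbb{1})\in Z(\mathrm{ind}-\ca)$, so that the half-braidings match and the weight-space rotation becomes precisely $\bigoplus_{c} (\dim\Hom(c,\mathcal I(\mathbb{1})))\,\rho^{c}_{n,a}$ acting on $\bigoplus_c \Hom_\ca(\mathrm{Forget}(c), (a\otimes\overline a)^{\otimes n})$ — and that the functoriality is compatible with taking ind-completions and centers. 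Once that dictionary is in place, the rest is an application of the results already proved: Corollary~\ref{cor:eigenclass} for the classification, and Propositions~\ref{thm:main_theorem} and~\ref{moreforless} together with Theorem~\ref{thm:main_theorem}(1) for the claim that FS indicator data determines the eigenvalue multiplicities.
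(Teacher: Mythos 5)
Your overall skeleton (reduce to Corollary~\ref{cor:eigenclass}, then show the rotation eigenvalue data is forced by the indicator matrix via Propositions~\ref{thm:main_theorem} and~\ref{moreforless}) is the paper's, but the middle step as you justify it has a genuine gap. You identify the annular rotation on the weight-$n$ space with a sum of generalized rotations $\rho^{c}_{n,a}$ over the simple summands $c$ of the induced object $\mathcal{I}(\mathbb{1})$, and then need the multiplicities $\dim\Hom_{Z(\ca)}(c,\mathcal{I}(\mathbb{1}))$, the twists $\theta_{c}$, and the full GFS matrices $\mathcal{V}_{m,l}$ --- i.e.\ data about the Drinfel'd center. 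Your claim that ``all of this input is recorded in the FS indicator matrix'' is unsupported: Proposition~\ref{moreforless} recovers $\nu^{b}_{n,k}$ from $\nu^{b}_{n/g,1}$ \emph{for the same central object} $b$, not from the $b=\mathbb{1}$ indicators alone, and the Ng--Schauenburg result you appeal to (there is no part (1) of Proposition~\ref{thm:main_theorem}; you mean $\mathcal{V}_{m,l}=\pi(g)A$) goes in the wrong direction --- it computes indicators \emph{from} the center's modular data and induction matrix, not the center's data from $\mathcal{V}_{\ca}$. Since the hypothesis only gives $\mathcal{V}_{\ca}=\mathcal{V}_{\mathcal{D}}$ and says nothing about the two centers or their induction matrices, your argument as written does not close.

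The gap disappears once you use the correct (and simpler) identification: the annular rotation on $P^{a}_{n}=\Hom_\ca(\mathbb{1},(a\otimes\overline a)^{n})$ is just $\rho^{\mathbb{1}}_{n,\,a\otimes\overline a}$ with $b=\mathbb{1}$ (trivial half-braiding, $\theta_{\mathbb{1}}=1$); no decomposition over $Z(\ca)$ and no central induction is needed. Its traces are the ordinary higher indicators $\nu_{n,k}(a\otimes\overline a)$, which by Proposition~\ref{moreforless} are Galois conjugates of $\nu_{n/g,1}\bigl((a\otimes\overline a)^{g}\bigr)$, and since $\GCD(n/g,1)=1$ these are additive over direct sums, hence sums of entries of $\mathcal{V}_{\ca}$ weighted by fusion multiplicities --- data identical for $\ca$ and $\mathcal{D}$ because the fusion rings are identified and $\mathcal{V}_{\ca}=\mathcal{V}_{\mathcal{D}}$. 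Proposition~\ref{thm:main_theorem} then gives equal eigenvalue multiplicities, hence equal eigenvalue sets $\Omega_{n}$, and Corollary~\ref{cor:eigenclass} finishes. One more small point: for the weight-$0$ component, one-dimensionality is not what classifies it in the annular category; the invariant is the eigenvalue of the double closed loop, which is $d_{a}^{2}=d_{b}^{2}$ (this is how the paper phrases it), and this does follow from $a\sim b$.
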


\begin{proof}
It is clear that the weight $0$ components of $P^{a}_*$ and $P^{b}_*$ are both given by the number $d^{2}_{a}=d^{2}_{b}$, hence have isomorphic weight $0$ components,  By Corollary \ref{cor:eigenclass}, it suffices to show that the eigenvalues of the rotation are the same as sets for all non-zero weights.  Since the FS indicator data is the same, combining Theorem \ref{thm:main_theorem} and Theorem \ref{moreforless} yields the desired result.
\end{proof}

\section{Eigenvalues for braid group representations from ribbon categories}

\subsection{Braid group actions and their spectra}

Let $\da$ be a ribbon fusion category. The braid group $\mathcal{B}_n$ acts on the vector space $\Hom_\ca(b,a^n)$ via the braiding in the category. We call the corresponding representation
\[
\pi_{a,n}^b: \mathcal{B}_n \to {\rm GL}(\Hom_\da(b,a^n))
\]

Given an object $a\in \da$ the tensor structure on $\da$ gives an inclusion $\End_\da(a^n) \to \End_\da(a^{n+1})$ which is compatible with the natural inclusion $\mathcal{B}_n \to \mathcal{B}_{n+1}$. The braiding gives the following homomorphism:
\[
\pi_{a,n} : \mathcal{B}_n \to \End_\da(a^n)
\]
 Taking colimits gives the homomorphism: \[
 \pi_a : \mathcal{B}_{\infty} \to \cup_n \End_\da(a^n)
 \]
 This implies that the spectrum of $\pi_{a,n}(g)$ only depends on the conjugacy class of $g$ inside $\mathcal{B}_{\infty}$.  (In other words, the representations of our braid group are ``local.'')  Furthermore, suppose a braid $x$ is conjugate in $\mathcal{B}_{\infty}$ to some braid $y\in \mathcal{B}_{n}$.  Then to determine the spectrum of $\pi_{a}(x)$ it suffices to determine the spectrum of $\pi^{b}_{a,n}(y)$ acting on $\Hom_\da(b, a^{n})$ for all $b\in \text{Irr}(\da)$.  In other words:
 \[
 \Spec(\pi_{a}(x))=\bigcup_{b\in \text{Irr}(\da)} \Spec(\pi^{b}_{a,n}(y)).
 \]
 
\subsection{Diagonalizing} 

Consider for $l,m\ge 0$, $l+m<n$ the {\bf Jucys-Murphy elements} of the braid group $B_{n}$ given by

\[A^{n}_{l,m}:=\begin{tikzpicture}[yscale=-1,scale=0.0333,baseline={([yshift=-.5ex]current bounding box.center)}]
\begin{scope}[shift={(0.00mm,719.29mm)}]
% path id='path4157'
% path spec='m 157.13778,-488.74702 5.71912,930.21594'
\draw [fill=none,draw=black] (157.14mm,-488.75mm)
-- ++(5.72mm,930.22mm)
;
% path id='path4159'
% path spec='m 162.85449,568.51443 0,277.7739'
\draw [fill=none,draw=black] (162.85mm,568.51mm)
-- ++(0.00mm,277.77mm)
;
% path id='path4161'
% path spec='m 357.13726,-488.74642 5.72164,826.85706'
\draw [fill=none,draw=black] (357.14mm,-488.75mm)
-- ++(5.72mm,826.86mm)
;
% path id='path4163'
% path spec='m 362.85523,480.22973 0,366.0586'
\draw [fill=none,draw=black] (362.86mm,480.23mm)
-- ++(0.00mm,366.06mm)
;
% path id='path4165'
% path spec='m 557.13676,-488.74565 5.72412,747.18398'
\draw [fill=none,draw=black] (557.14mm,-488.75mm)
-- ++(5.72mm,747.18mm)
;
% path id='path4167'
% path spec='m 562.85597,407.01804 0,439.27021'
\draw [fill=none,draw=black] (562.86mm,407.02mm)
-- ++(0.00mm,439.27mm)
;
% path id='path4169'
% path spec='m 787.13627,-488.74507 5.7268,656.74485'
\draw [fill=none,draw=black] (787.14mm,-488.75mm)
-- ++(5.73mm,656.74mm)
;
% path id='path4171'
% path spec='m 792.85682,293.32674 0,552.96151'
\draw [fill=none,draw=black] (792.86mm,293.33mm)
-- ++(0.00mm,552.96mm)
;
% path id='path4173'
% path spec='m 1043.1362,-488.7447 5.7288,589.99227'
\draw [fill=none,draw=black] (1043.14mm,-488.74mm)
-- ++(5.73mm,589.99mm)
;
% path id='path4175'
% path spec='m 1048.8577,216.33094 0,629.95731'
\draw [fill=none,draw=black] (1048.86mm,216.33mm)
-- ++(0.00mm,629.96mm)
;
% path id='path4177'
% path spec='m 1303.1354,-488.74423 5.7324,484.4803609'
\draw [fill=none,draw=black] (1303.14mm,-488.74mm)
-- ++(5.73mm,484.48mm)
;
% path id='path4179'
% path spec='m 1308.8587,118.7576 0,727.53065'
\draw [fill=none,draw=black] (1308.86mm,118.76mm)
-- ++(0.00mm,727.53mm)
;
% path id='path4181'
% path spec='m 1541.1353,-488.74463 5.7342,410.683604'
\draw [fill=none,draw=black] (1541.14mm,-488.74mm)
-- ++(5.73mm,410.68mm)
;
% path id='path4183'
% path spec='m 1546.8595,34.563042 0,811.725208'
\draw [fill=none,draw=black] (1546.86mm,34.56mm)
-- ++(0.00mm,811.73mm)
;
% path id='path4189'
% path spec='M 471.42707,839.82838 C 483.55489,779.54647 446.5715,745.8264 393.56963,739.70062'
\draw [fill=none,draw=black] (471.43mm,839.83mm)
%%%% Warning: check controls
.. controls (483.55mm,779.55mm) and (446.57mm,745.83mm) .. (393.57mm,739.70mm)
;
% path id='path4191'
% path spec='M 329.28368,732.16414 C 278.4176,722.78468 203.99389,699.52411 178.56884,694.48166'
\draw [fill=none,draw=black] (329.28mm,732.16mm)
%%%% Warning: check controls
.. controls (278.42mm,722.78mm) and (203.99mm,699.52mm) .. (178.57mm,694.48mm)
;
% path id='path4193'
% path spec='M 122.85434,682.63855 C -394.46463,551.74963 1530.1188,-18.552323 1645.7169,-55.938244'
\draw [fill=none,draw=black] (122.85mm,682.64mm)
%%%% Warning: check controls
.. controls (-394.46mm,551.75mm) and (1530.12mm,-18.55mm) .. (1645.72mm,-55.94mm)
;
% path id='path4195'
% path spec='m 1645.3037,-56.109496 c 131.5411,-40.409547 95.5148,-131.633944 -69.9831,-157.063224'
\draw [fill=none,draw=black] (1645.30mm,-56.11mm)
.. controls ++(131.54mm,-40.41mm) and ++(165.50mm,25.43mm) .. ++(-69.98mm,-157.06mm)
;
% path id='path4197'
% path spec='m 1512.8595,-222.81795 c -63.1988,-12.67312 -128.0777,-40.77341 -181.4293,-60.292'
\draw [fill=none,draw=black] (1512.86mm,-222.82mm)
.. controls ++(-63.20mm,-12.67mm) and ++(53.35mm,19.52mm) .. ++(-181.43mm,-60.29mm)
;
% path id='path4199'
% path spec='m 1281.4301,-294.95302 c -111.9077,-42.90384 -110.8492,-120.83014 -132.8577,-191.6424'
\draw [fill=none,draw=black] (1281.43mm,-294.95mm)
.. controls ++(-111.91mm,-42.90mm) and ++(22.01mm,70.81mm) .. ++(-132.86mm,-191.64mm)
;
% path id='path4209'
% path spec='m 1246.3801,-501.2323 c 19.5892,-88.31933 160.9373,2.08305 170.5096,-101.05547'
\draw [fill=none,draw=black] (1246.38mm,-501.23mm)
.. controls ++(19.59mm,-88.32mm) and ++(-9.57mm,103.14mm) .. ++(170.51mm,-101.06mm)
;
% path id='path4211'
% path spec='m 1587.1594,-501.2323 c -19.5893,-88.31933 -160.9372,2.08305 -170.5096,-101.05547'
\draw [fill=none,draw=black] (1587.16mm,-501.23mm)
.. controls ++(-19.59mm,-88.32mm) and ++(9.57mm,103.14mm) .. ++(-170.51mm,-101.06mm)
;
% path id='path4213'
% path spec='M 91.792082,859.80208 C 110.39781,948.12984 244.64912,857.71883 253.74088,960.86719'
\draw [fill=none,draw=black] (91.79mm,859.80mm)
%%%% Warning: check controls
.. controls (110.40mm,948.13mm) and (244.65mm,857.72mm) .. (253.74mm,960.87mm)
;
% path id='path4215'
% path spec='M 415.46179,859.80208 C 396.85605,948.12984 262.60474,857.71883 253.51298,960.86719'
\draw [fill=none,draw=black] (415.46mm,859.80mm)
%%%% Warning: check controls
.. controls (396.86mm,948.13mm) and (262.60mm,857.72mm) .. (253.51mm,960.87mm)
;
\node [black] at (1400.14mm,-653.35mm) { $m$ };
\node [black] at (245.71mm,1029.51mm) { $l$ };
\end{scope}
\end{tikzpicture}.\]
Note that we could define $A^{n,-}_{l,m}$ similarly with over and under crossings switched, and the rest of this section follows from similar arguments. Letting $\sigma_{i}$ be the standard braid group generators, inside $B_{\infty}$, $\sigma_{i}$ is conjugate to $A_{0,0}^{2}$ and  $\sigma_{i+1} \sigma_{i}\sigma_{i+1}=\sigma_{i}\sigma_{i+1}\sigma_{i}$ is conjugate to $A^{3}_{1,0}$. 

Using the results from the last section we can compute the eigenvalues for $A^{n}_{l,m}$ under the above representations in terms of generalized Frobenius-Schur indicators. Let $\ca$ be ribbon fusion, and let $a,b\in \ca$.  We define the linear isomorphisms 
\[
\ V^{n}_{l,m}: \Hom_\ca(b, a^{\otimes n})\rightarrow \Hom_\ca( \overline{a}^{ l+ m} \otimes b, a^{n-(l+m)})
\]
by the following formulae:
\[
V^{n}_{l,m}(f):=({\rm ev}_{\overline{a}^{ l}}\otimes 1_{a^{ n-(m+l)}}\otimes {\rm ev}_{\overline{a}^{ m}})\circ( 1_{\overline{a}^{ l}}\otimes f\otimes 1_{\overline{a}^{ m}})\circ (1_{\overline{a}^{ l}}\otimes c_{\overline{a}^{ m},b}).
\]
%\[
%V^{-}_{l,m}(f):=({\rm ev}_{\overline{a}^{ l}}\otimes 1_{a^{ n-(m+l)}}\otimes {\rm ev}_{\overline{a}^{ m}})\circ( 1_{\overline{a}^{ l}}\otimes f\otimes 1_{\overline{a}^{ m}})\circ (c^{rev}_{b,\overline{a}^{ l}}\otimes 1_{\overline{a}^{ m}})
%\]
\[
V^{n}_{l,m}:\ \begin{tikzpicture}[scale=0.05,yscale=-1,
baseline={([yshift=-.5ex]current bounding box.center)}]
%yscale=-1
% path id='path3336'
% path spec='m 191.42857,372.3622 0,125.71429 331.42857,0 0,-127.14286 z'
\draw [fill=none,draw=black] (191.43mm,372.36mm)
-- ++(0.00mm,125.71mm)
-- ++(331.43mm,0.00mm)
-- ++(0.00mm,-127.14mm)
-- cycle
;
% path id='path3338'
% path spec='m 230,371.50506 0,-141.42857 0,0'
\draw [fill=none,draw=black] (230.00mm,371.51mm)
-- ++(0.00mm,-141.43mm)
-- ++(0.00mm,0.00mm)
;
% path id='path3340'
% path spec='m 274.28571,372.3622 c 0,-142.85714 0,-142.85714 0,-142.85714'
\draw [fill=none,draw=black] (274.29mm,372.36mm)
.. controls ++(0.00mm,-142.86mm) and ++(0.00mm,0.00mm) .. ++(0.00mm,-142.86mm)
;
% path id='path3342'
% path spec='m 445.71429,371.50506 c 0,-141.42857 0,-141.42857 0,-141.42857'
\draw [fill=none,draw=black] (445.71mm,371.51mm)
.. controls ++(0.00mm,-141.43mm) and ++(0.00mm,0.00mm) .. ++(0.00mm,-141.43mm)
;
\draw [fill=black,draw=black] (307.14mm,290.93mm) circle (0.92mm) ;
\draw [fill=black,draw=black] (327.14mm,290.93mm) circle (0.92mm) ;
\draw [fill=black,draw=black] (347.14mm,290.93mm) circle (0.92mm) ;
\draw [fill=black,draw=black] (367.14mm,290.93mm) circle (0.92mm) ;
\draw [fill=black,draw=black] (387.14mm,290.93mm) circle (0.92mm) ;
\draw [fill=black,draw=black] (407.14mm,290.93mm) circle (0.92mm) ;
% path id='path3358'
% path spec='m 348.56192,498.08382 c 4.30473,144.27106 4.30473,144.27106 4.30473,144.27106'
\draw [fill=none,draw=black] (348.56mm,498.08mm)
.. controls ++(4.30mm,144.27mm) and ++(0.00mm,0.00mm) .. ++(4.30mm,144.27mm)
;
% path id='path3376'
% path spec='m 217.69378,291.21951 c 7.1511,-0.48215 10.02854,-12.34905 11.4966,-14.15682'
\draw [fill=none,draw=black] (217.69mm,291.22mm)
.. controls ++(7.15mm,-0.48mm) and ++(-1.47mm,1.81mm) .. ++(11.50mm,-14.16mm)
;
% path id='path3378'
% path spec='m 238.90107,291.21226 c -5.74511,-2.70062 -8.91925,-12.48696 -10.10152,-14.14214'
\draw [fill=none,draw=black] (238.90mm,291.21mm)
.. controls ++(-5.75mm,-2.70mm) and ++(1.18mm,1.66mm) .. ++(-10.10mm,-14.14mm)
;
% path id='path3380'
% path spec='m 263.15065,291.21951 c 7.1511,-0.48215 10.02854,-12.34905 11.4966,-14.15682'
\draw [fill=none,draw=black] (263.15mm,291.22mm)
.. controls ++(7.15mm,-0.48mm) and ++(-1.47mm,1.81mm) .. ++(11.50mm,-14.16mm)
;
% path id='path3382'
% path spec='m 284.35794,291.21226 c -5.74511,-2.70062 -8.91925,-12.48696 -10.10152,-14.14214'
\draw [fill=none,draw=black] (284.36mm,291.21mm)
.. controls ++(-5.75mm,-2.70mm) and ++(1.18mm,1.66mm) .. ++(-10.10mm,-14.14mm)
;
% path id='path3384'
% path spec='m 434.87658,296.27027 c 7.1511,-0.48215 10.02854,-12.34905 11.4966,-14.15682'
\draw [fill=none,draw=black] (434.88mm,296.27mm)
.. controls ++(7.15mm,-0.48mm) and ++(-1.47mm,1.81mm) .. ++(11.50mm,-14.16mm)
;
% path id='path3386'
% path spec='m 456.08387,296.26302 c -5.74511,-2.70062 -8.91925,-12.48696 -10.10152,-14.14214'
\draw [fill=none,draw=black] (456.08mm,296.26mm)
.. controls ++(-5.75mm,-2.70mm) and ++(1.18mm,1.66mm) .. ++(-10.10mm,-14.14mm)
;
% path id='path3388'
% path spec='m 338.91209,572.04192 c 7.1511,-0.48215 10.02854,-12.34905 11.4966,-14.15682'
\draw [fill=none,draw=black] (338.91mm,572.04mm)
.. controls ++(7.15mm,-0.48mm) and ++(-1.47mm,1.81mm) .. ++(11.50mm,-14.16mm)
;
% path id='path3390'
% path spec='m 360.11938,572.03467 c -5.74511,-2.70062 -8.91925,-12.48696 -10.10152,-14.14214'
\draw [fill=none,draw=black] (360.12mm,572.03mm)
.. controls ++(-5.75mm,-2.70mm) and ++(1.18mm,1.66mm) .. ++(-10.10mm,-14.14mm)
;
% path id='path3392'
% path spec='m 336.21492,180.8924 c -26.39266,48.11803 -82.22958,6.74899 -119.06806,39.32225'
\draw [fill=none,draw=black] (336.21mm,180.89mm)
.. controls ++(-26.39mm,48.12mm) and ++(36.84mm,-32.57mm) .. ++(-119.07mm,39.32mm)
;
% path id='path3396'
% path spec='m 336.15802,180.93108 c 25.89211,47.2055 80.67006,6.621 116.80989,38.57653'
\draw [fill=none,draw=black] (336.16mm,180.93mm)
.. controls ++(25.89mm,47.21mm) and ++(-36.14mm,-31.96mm) .. ++(116.81mm,38.58mm)
;
\node [black] at (355.71mm,130.22mm) { $a^n$ };
\node [black] at (352.86mm,439.51mm) { $f$ };
\node [black] at (350.57mm,690.93mm) { $b$ };
\end{tikzpicture}
\quad \mapsto \quad
\begin{tikzpicture}[yscale=-1,scale=0.07,
baseline={([yshift=-.5ex]current bounding box.center)}]
% path id='path3840'
% path spec='m 200,456.64789 c 0,78.57143 0,78.57143 0,78.57143 l 282.85715,0 0,-75.71428 z'
\draw [fill=none,draw=black] (200.00mm,456.65mm)
.. controls ++(0.00mm,78.57mm) and ++(0.00mm,0.00mm) .. ++(0.00mm,78.57mm)
-- ++(282.86mm,0.00mm)
-- ++(0.00mm,-75.71mm)
-- cycle
;
% path id='path3846'
% path spec='m 304.28571,210.93361 c 6.57571,62.11256 1.18281,197.49629 4.28572,247.14286'
\draw [fill=none,draw=black] (304.29mm,210.93mm)
.. controls ++(6.58mm,62.11mm) and ++(-3.10mm,-49.65mm) .. ++(4.29mm,247.14mm)
;
% path id='path3848'
% path spec='m 357.14286,215.21932 c 12.81392,63.13836 -2.27623,187.28514 1.42857,242.85715'
\draw [fill=none,draw=black] (357.14mm,215.22mm)
.. controls ++(12.81mm,63.14mm) and ++(-3.70mm,-55.57mm) .. ++(1.43mm,242.86mm)
;
% path id='path3852'
% path spec='M 237.14286,456.82647 C 228.14241,427.09386 206.27938,352.07964 160,375.21933 c -85.324755,42.66237 -47.14286,291.16185 -47.14286,370'
\draw [fill=none,draw=black] (237.14mm,456.83mm)
%%%% Warning: check controls
.. controls (228.14mm,427.09mm) and (206.28mm,352.08mm) .. (160.00mm,375.22mm)
.. controls ++(-85.32mm,42.66mm) and ++(0.00mm,-78.84mm) .. ++(-47.14mm,370.00mm)
;
% path id='path3856'
% path spec='m 274.46428,457.00504 c -2.98196,-39.13383 -6.21734,-46.55271 -27.14286,-81.42857 -19.92104,-33.20174 -57.60797,-58.98589 -93.0357,-66.07143 C 32.360414,285.11998 52.589256,677.36194 61.428572,748.07647'
\draw [fill=none,draw=black] (274.46mm,457.01mm)
.. controls ++(-2.98mm,-39.13mm) and ++(20.93mm,34.88mm) .. ++(-27.14mm,-81.43mm)
.. controls ++(-19.92mm,-33.20mm) and ++(35.43mm,7.09mm) .. ++(-93.04mm,-66.07mm)
%%%% Warning: check controls
.. controls (32.36mm,285.12mm) and (52.59mm,677.36mm) .. (61.43mm,748.08mm)
;
% path id='path3877'
% path spec='m 455.71429,459.25994 c 11.92498,-26.31107 40.48408,-45.46919 68.57142,-45.46919 64.18677,0 117.04818,84.3804 70,131.42858 -117.36302,117.36302 -331.42857,5.40301 -331.42857,201.42857'
\draw [fill=none,draw=black] (455.71mm,459.26mm)
.. controls ++(11.92mm,-26.31mm) and ++(-28.09mm,0.00mm) .. ++(68.57mm,-45.47mm)
.. controls ++(64.19mm,0.00mm) and ++(47.05mm,-47.05mm) .. ++(70.00mm,131.43mm)
.. controls ++(-117.36mm,117.36mm) and ++(0.00mm,-196.03mm) .. ++(-331.43mm,201.43mm)
;
% path id='path3881'
% path spec='m 418.57143,459.50504 c 13.97493,-48.49403 49.75715,-77.90286 94.28571,-95.71429 103.94221,-41.57688 196.41357,121.62591 131.42857,202.85715 -85.30505,106.63131 -307.14285,78.75858 -307.14285,181.42857'
\draw [fill=none,draw=black] (418.57mm,459.51mm)
.. controls ++(13.97mm,-48.49mm) and ++(-44.53mm,17.81mm) .. ++(94.29mm,-95.71mm)
.. controls ++(103.94mm,-41.58mm) and ++(64.98mm,-81.23mm) .. ++(131.43mm,202.86mm)
.. controls ++(-85.31mm,106.63mm) and ++(0.00mm,-102.67mm) .. ++(-307.14mm,181.43mm)
;
% path id='path3883'
% path spec='m 347.78834,534.39903 c -2.71105,35.47319 19.49575,50.57692 36.06963,70.46558'
\draw [fill=none,draw=black] (347.79mm,534.40mm)
.. controls ++(-2.71mm,35.47mm) and ++(-16.57mm,-19.89mm) .. ++(36.07mm,70.47mm)
;
% path id='path3885'
% path spec='m 402.56138,627.0372 c 10.20399,12.91486 13.041,18.97552 21.68238,29.34518'
\draw [fill=none,draw=black] (402.56mm,627.04mm)
.. controls ++(10.20mm,12.91mm) and ++(-8.64mm,-10.37mm) .. ++(21.68mm,29.35mm)
;
% path id='path3887'
% path spec='m 438.4062,672.52452 c 20.445,25.4579 9.09138,50.5673 9.09138,79.80205'
\draw [fill=none,draw=black] (438.41mm,672.52mm)
.. controls ++(20.45mm,25.46mm) and ++(0.00mm,-29.23mm) .. ++(9.09mm,79.80mm)
;
% path id='path3891'
% path spec='m 277.0656,185.6931 c 4.87419,-27.66036 43.60184,5.90829 54.19404,-23.43182'
\draw [fill=none,draw=black] (277.07mm,185.69mm)
.. controls ++(4.87mm,-27.66mm) and ++(-10.59mm,29.34mm) .. ++(54.19mm,-23.43mm)
;
% path id='path3893'
% path spec='m 385.28528,185.59207 c -4.87419,-27.66036 -43.60184,5.90829 -54.19404,-23.43182'
\draw [fill=none,draw=black] (385.29mm,185.59mm)
.. controls ++(-4.87mm,-27.66mm) and ++(10.59mm,29.34mm) .. ++(-54.19mm,-23.43mm)
;
% path id='path3895'
% path spec='m 36.669599,758.22596 c 4.87419,27.66036 43.60184,-5.90829 54.19404,23.43182'
\draw [fill=none,draw=black] (36.67mm,758.23mm)
.. controls ++(4.87mm,27.66mm) and ++(-10.59mm,-29.34mm) .. ++(54.19mm,23.43mm)
;
% path id='path3897'
% path spec='m 144.88928,758.32699 c -4.87419,27.66036 -43.60184,-5.90829 -54.194041,23.43182'
\draw [fill=none,draw=black] (144.89mm,758.33mm)
.. controls ++(-4.87mm,27.66mm) and ++(10.59mm,-29.34mm) .. ++(-54.19mm,23.43mm)
;
% path id='path3899'
% path spec='m 242.74072,760.24627 c 4.87419,27.66036 43.60184,-5.90829 54.19404,23.43182'
\draw [fill=none,draw=black] (242.74mm,760.25mm)
.. controls ++(4.87mm,27.66mm) and ++(-10.59mm,-29.34mm) .. ++(54.19mm,23.43mm)
;
% path id='path3901'
% path spec='m 350.9604,760.3473 c -4.87419,27.66036 -43.60184,-5.90829 -54.19404,23.43182'
\draw [fill=none,draw=black] (350.96mm,760.35mm)
.. controls ++(-4.87mm,27.66mm) and ++(10.59mm,-29.34mm) .. ++(-54.19mm,23.43mm)
;
\node [black] at (310.57mm,115.22mm) { $n - (m+l)$ };
\node [black] at (81.43mm,830.51mm) { $l$ };
\node [black] at (292.74mm,830.23mm) { $m$ };
\node [black] at (327.29mm,495.00mm) { $f$ };
% path id='path3919'
% path spec='m 297.14401,309.28353 c 6.06767,0.17172 9.22806,-8.23746 10.32044,-14.48285'
\draw [fill=none,draw=black] (297.14mm,309.28mm)
.. controls ++(6.07mm,0.17mm) and ++(-1.09mm,6.25mm) .. ++(10.32mm,-14.48mm)
;
% path id='path3921'
% path spec='m 317.70617,309.21648 c -6.06767,0.17173 -9.22806,-8.23747 -10.32044,-14.48286'
\draw [fill=none,draw=black] (317.71mm,309.22mm)
.. controls ++(-6.07mm,0.17mm) and ++(1.09mm,6.25mm) .. ++(-10.32mm,-14.48mm)
;
% path id='path3923'
% path spec='m 351.42972,306.99782 c 6.06767,0.17172 9.22806,-8.23746 10.32044,-14.48285'
\draw [fill=none,draw=black] (351.43mm,307.00mm)
.. controls ++(6.07mm,0.17mm) and ++(-1.09mm,6.25mm) .. ++(10.32mm,-14.48mm)
;
% path id='path3925'
% path spec='m 371.99188,306.93077 c -6.06767,0.17173 -9.22806,-8.23747 -10.32044,-14.48286'
\draw [fill=none,draw=black] (371.99mm,306.93mm)
.. controls ++(-6.07mm,0.17mm) and ++(1.09mm,6.25mm) .. ++(-10.32mm,-14.48mm)
;
% path id='path3927'
% path spec='m 45.001144,569.85326 c 6.06767,-0.17172 9.22806,8.23746 10.32044,14.48285'
\draw [fill=none,draw=black] (45.00mm,569.85mm)
.. controls ++(6.07mm,-0.17mm) and ++(-1.09mm,-6.25mm) .. ++(10.32mm,14.48mm)
;
% path id='path3929'
% path spec='m 65.563304,569.92031 c -6.06767,-0.17173 -9.22806,8.23747 -10.32044,14.48286'
\draw [fill=none,draw=black] (65.56mm,569.92mm)
.. controls ++(-6.07mm,-0.17mm) and ++(1.09mm,-6.25mm) .. ++(-10.32mm,14.48mm)
;
% path id='path3931'
% path spec='m 94.286858,569.13897 c 6.067672,-0.17172 9.228062,8.23746 10.320442,14.48285'
\draw [fill=none,draw=black] (94.29mm,569.14mm)
.. controls ++(6.07mm,-0.17mm) and ++(-1.09mm,-6.25mm) .. ++(10.32mm,14.48mm)
;
% path id='path3933'
% path spec='m 114.84902,569.20602 c -6.06767,-0.17173 -9.22806,8.23747 -10.32044,14.48286'
\draw [fill=none,draw=black] (114.85mm,569.21mm)
.. controls ++(-6.07mm,-0.17mm) and ++(1.09mm,-6.25mm) .. ++(-10.32mm,14.48mm)
;
% path id='path3935'
% path spec='m 269.01655,672.55824 c 5.34161,2.88328 3.87705,11.74654 1.70243,17.70215'
\draw [fill=none,draw=black] (269.02mm,672.56mm)
.. controls ++(5.34mm,2.88mm) and ++(2.17mm,-5.96mm) .. ++(1.70mm,17.70mm)
;
% path id='path3937'
% path spec='m 286.79393,682.89126 c -5.16998,-3.18077 -12.1096,2.524 -16.17665,7.38788'
\draw [fill=none,draw=black] (286.79mm,682.89mm)
.. controls ++(-5.17mm,-3.18mm) and ++(4.07mm,-4.86mm) .. ++(-16.18mm,7.39mm)
;
% path id='path3939'
% path spec='m 371.06662,681.02837 c 3.40052,5.02817 -2.00002,12.20708 -6.68419,16.47987'
\draw [fill=none,draw=black] (371.07mm,681.03mm)
.. controls ++(3.40mm,5.03mm) and ++(4.68mm,-4.27mm) .. ++(-6.68mm,16.48mm)
;
% path id='path3941'
% path spec='m 382.04267,698.4161 c -3.11069,-5.21246 -11.90265,-3.36745 -17.75906,-0.9383'
\draw [fill=none,draw=black] (382.04mm,698.42mm)
.. controls ++(-3.11mm,-5.21mm) and ++(5.86mm,-2.43mm) .. ++(-17.76mm,-0.94mm)
;
\node [black] at (485.71mm,743.79mm) { $b$ };
\node [black] at (122.86mm,273.79mm) { $a$ };
\node [black] at (570.00mm,329.51mm) { $a$ };
\end{tikzpicture}
\]
Let $G : \ca \boxtimes \tilde{\ca} \to Z(\ca)$ be the usual inclusion. Then
\[
G(\overline{a}^{l+m}\boxtimes  \tilde{b})=(\overline{a}^{l+m}\otimes b, (c_{\overline{a}^{m+l}, \cdot}\otimes 1_{b})\circ (1_{\overline{a}^{l}}\otimes  \tilde{c}_{b, \cdot}) ).
\]
If we identify $\ca \boxtimes \tilde{\ca}$ with its image under $G$, then we have
\[
(V^{n}_ {l,m})^{-1}\ \rho^{ \overline{a}^{l+m}\boxtimes \tilde{b}}_{n-(m+l), a}\ V^{n}_{l,m}=\theta_{a}\pi^{b}_{a}(A^{ n}_{l,m}).
\]
%\[
%V^{-1}_{-, l,m}\ \rho^{ b\boxtimes \overline{\tilde{a}}^{l+m}}_{n-(m+l), a}\ V_{+, l,m}=\theta^{-1}_{a}\pi^{b}_{a}(A^{ n,-}_{l,m}).
%\]
The picture is given by 
\[
\begin{tikzpicture}[yscale=-1,scale=0.03,baseline={([yshift=-.5ex]current bounding box.center)}]
\begin{scope}[shift={(0.00mm,719.29mm)}]
% path id='path3355'
% path spec='m 548.64986,-13.196743 0,136.832223 649.31084,0 0,-136.832223 z'
\draw [fill=none,draw=black] (548.65mm,-13.20mm)
-- ++(0.00mm,136.83mm)
-- ++(649.31mm,0.00mm)
-- ++(0.00mm,-136.83mm)
-- cycle
;
% path id='path3357'
% path spec='m 791.42857,-719.06635 -5.71428,705.71429'
\draw [fill=none,draw=black] (791.43mm,-719.07mm)
-- ++(-5.71mm,705.71mm)
;
% path id='path3359'
% path spec='m 891.42857,-719.06635 -5.71428,705.71429'
\draw [fill=none,draw=black] (891.43mm,-719.07mm)
-- ++(-5.71mm,705.71mm)
;
% path id='path3361'
% path spec='M 597.23694,-14.747693 C 597.02096,-129.09247 533.48509,-232.09027 470.02841,-231.95628'
\draw [fill=none,draw=black] (597.24mm,-14.75mm)
%%%% Warning: check controls
.. controls (597.02mm,-129.09mm) and (533.49mm,-232.09mm) .. (470.03mm,-231.96mm)
;
% path id='path3363'
% path spec='M 344.03333,-14.747693 C 344.24931,-129.09247 407.78518,-232.09027 471.24186,-231.95628'
\draw [fill=none,draw=black] (344.03mm,-14.75mm)
%%%% Warning: check controls
.. controls (344.25mm,-129.09mm) and (407.79mm,-232.09mm) .. (471.24mm,-231.96mm)
;
% path id='path3365'
% path spec='M 343.45187,-16.379172 342.44171,370.50926'
\draw [fill=none,draw=black] (343.45mm,-16.38mm)
-- (342.44mm,370.51mm)
;
% path id='path3367'
% path spec='M 342.3451,369.16436 C 342.07076,483.46803 261.36519,586.42883 180.76021,586.29489'
\draw [fill=none,draw=black] (342.35mm,369.16mm)
%%%% Warning: check controls
.. controls (342.07mm,483.47mm) and (261.37mm,586.43mm) .. (180.76mm,586.29mm)
;
% path id='path3369'
% path spec='M 20.716686,369.16436 C 20.991032,483.46803 101.6966,586.42883 182.30158,586.29489'
\draw [fill=none,draw=black] (20.72mm,369.16mm)
%%%% Warning: check controls
.. controls (20.99mm,483.47mm) and (101.70mm,586.43mm) .. (182.30mm,586.29mm)
;
% path id='path3371'
% path spec='M 21.444595,-706.86771 20.450401,372.13245'
\draw [fill=none,draw=black] (21.44mm,-706.87mm)
-- (20.45mm,372.13mm)
;
% path id='path3373'
% path spec='M 676.94351,-10.411466 C 676.571,-184.0662 566.98621,-340.48832 457.53799,-340.28483'
\draw [fill=none,draw=black] (676.94mm,-10.41mm)
%%%% Warning: check controls
.. controls (676.57mm,-184.07mm) and (566.99mm,-340.49mm) .. (457.54mm,-340.28mm)
;
% path id='path3375'
% path spec='M 240.22541,-10.411466 C 240.59793,-184.0662 350.18271,-340.48832 459.63093,-340.28483'
\draw [fill=none,draw=black] (240.23mm,-10.41mm)
%%%% Warning: check controls
.. controls (240.60mm,-184.07mm) and (350.18mm,-340.49mm) .. (459.63mm,-340.28mm)
;
% path id='path3377'
% path spec='M 239.45187,-12.026703 238.44171,377.5688'
\draw [fill=none,draw=black] (239.45mm,-12.03mm)
-- (238.44mm,377.57mm)
;
% path id='path3379'
% path spec='m 240.05913,373.51385 c -0.10127,73.12537 -29.88985,138.99419 -59.64131,138.9085'
\draw [fill=none,draw=black] (240.06mm,373.51mm)
.. controls ++(-0.10mm,73.13mm) and ++(29.75mm,0.09mm) .. ++(-59.64mm,138.91mm)
;
% path id='path3381'
% path spec='m 121.34545,373.51385 c 0.10126,73.12537 29.88984,138.99419 59.6413,138.9085'
\draw [fill=none,draw=black] (121.35mm,373.51mm)
.. controls ++(0.10mm,73.13mm) and ++(-29.75mm,0.09mm) .. ++(59.64mm,138.91mm)
;
% path id='path3383'
% path spec='M 123.23038,-711.59788 122.39266,375.00529'
\draw [fill=none,draw=black] (123.23mm,-711.60mm)
-- (122.39mm,375.01mm)
;
% path id='path3435'
% path spec='m 838.42661,121.00158 0,274.76148 0,0'
\draw [fill=none,draw=black] (838.43mm,121.00mm)
-- ++(0.00mm,274.76mm)
-- ++(0.00mm,0.00mm)
;
% path id='path3437'
% path spec='m 838.42661,458.39256 4.04061,595.99004 0,0'
\draw [fill=none,draw=black] (838.43mm,458.39mm)
-- ++(4.04mm,595.99mm)
-- ++(0.00mm,0.00mm)
;
% path id='path3439'
% path spec='m 1126.0306,-14.747693 c 0.2159,-114.344777 63.7518,-217.342577 127.2085,-217.208587'
\draw [fill=none,draw=black] (1126.03mm,-14.75mm)
.. controls ++(0.22mm,-114.34mm) and ++(-63.46mm,-0.13mm) .. ++(127.21mm,-217.21mm)
;
% path id='path3441'
% path spec='m 1379.2342,-14.747693 c -0.216,-114.344777 -63.7519,-217.342577 -127.2086,-217.208587'
\draw [fill=none,draw=black] (1379.23mm,-14.75mm)
.. controls ++(-0.22mm,-114.34mm) and ++(63.46mm,-0.13mm) .. ++(-127.21mm,-217.21mm)
;
% path id='path3443'
% path spec='m 1379.8156,-16.379172 1.0102,386.888432'
\draw [fill=none,draw=black] (1379.82mm,-16.38mm)
-- ++(1.01mm,386.89mm)
;
% path id='path3445'
% path spec='m 1380.9224,369.16436 c 0.2743,114.30367 80.9799,217.26447 161.5849,217.13053'
\draw [fill=none,draw=black] (1380.92mm,369.16mm)
.. controls ++(0.27mm,114.30mm) and ++(-80.61mm,0.13mm) .. ++(161.58mm,217.13mm)
;
% path id='path3447'
% path spec='m 1702.5508,369.16436 c -0.2743,114.30367 -80.9799,217.26447 -161.5849,217.13053'
\draw [fill=none,draw=black] (1702.55mm,369.16mm)
.. controls ++(-0.27mm,114.30mm) and ++(80.61mm,0.13mm) .. ++(-161.58mm,217.13mm)
;
% path id='path3449'
% path spec='m 1701.8229,-706.86771 0.9942,1079.00016'
\draw [fill=none,draw=black] (1701.82mm,-706.87mm)
-- ++(0.99mm,1079.00mm)
;
% path id='path3451'
% path spec='m 1046.324,-10.411466 c 0.3725,-173.654734 109.9573,-330.076854 219.4055,-329.873364'
\draw [fill=none,draw=black] (1046.32mm,-10.41mm)
.. controls ++(0.37mm,-173.65mm) and ++(-109.45mm,-0.20mm) .. ++(219.41mm,-329.87mm)
;
% path id='path3453'
% path spec='M 1483.0421,-10.411466 C 1482.6696,-184.0662 1373.0848,-340.48832 1263.6366,-340.28483'
\draw [fill=none,draw=black] (1483.04mm,-10.41mm)
%%%% Warning: check controls
.. controls (1482.67mm,-184.07mm) and (1373.08mm,-340.49mm) .. (1263.64mm,-340.28mm)
;
% path id='path3455'
% path spec='m 1483.8156,-12.026703 1.0102,389.595503'
\draw [fill=none,draw=black] (1483.82mm,-12.03mm)
-- ++(1.01mm,389.60mm)
;
% path id='path3457'
% path spec='m 1483.2084,373.51385 c 0.1012,73.12537 29.8898,138.99419 59.6413,138.9085'
\draw [fill=none,draw=black] (1483.21mm,373.51mm)
.. controls ++(0.10mm,73.13mm) and ++(-29.75mm,0.09mm) .. ++(59.64mm,138.91mm)
;
% path id='path3459'
% path spec='m 1601.9221,373.51385 c -0.1013,73.12537 -29.8899,138.99419 -59.6413,138.9085'
\draw [fill=none,draw=black] (1601.92mm,373.51mm)
.. controls ++(-0.10mm,73.13mm) and ++(29.75mm,0.09mm) .. ++(-59.64mm,138.91mm)
;
% path id='path3461'
% path spec='m 1600.0371,-711.59788 0.8377,1086.60317'
\draw [fill=none,draw=black] (1600.04mm,-711.60mm)
-- ++(0.84mm,1086.60mm)
;
% path id='path3463'
% path spec='M 740,-7.63778 C 855.77383,-575.94617 37.050585,-522.73189 220.00001,-7.637799'
\draw [fill=none,draw=black] (740.00mm,-7.64mm)
%%%% Warning: check controls
.. controls (855.77mm,-575.95mm) and (37.05mm,-522.73mm) .. (220.00mm,-7.64mm)
;
% path id='path3473'
% path spec='m 254.28571,36.64794 c 19.50394,30.29375 44.52484,75.00166 60,100'
\draw [fill=none,draw=black] (254.29mm,36.65mm)
.. controls ++(19.50mm,30.29mm) and ++(-15.48mm,-25.00mm) .. ++(60.00mm,100.00mm)
;
% path id='path3475'
% path spec='M 375.71429,204.64793 C 679.83903,457.06207 1070.3124,510.92617 1354.2857,300.3622'
\draw [fill=none,draw=black] (375.71mm,204.65mm)
%%%% Warning: check controls
.. controls (679.84mm,457.06mm) and (1070.31mm,510.93mm) .. (1354.29mm,300.36mm)
;
% path id='path3479'
% path spec='m 1398.5714,256.64794 c 15.9954,-28.71623 52.6846,-73.76788 62.8572,-88.00943'
\draw [fill=none,draw=black] (1398.57mm,256.65mm)
.. controls ++(16.00mm,-28.72mm) and ++(-10.17mm,14.24mm) .. ++(62.86mm,-88.01mm)
;
% path id='path3481'
% path spec='m 1507.1428,110.93366 c 159.0331,-303.51561 -101.4953,-613.0576 -172.8571,-827.14287'
\draw [fill=none,draw=black] (1507.14mm,110.93mm)
.. controls ++(159.03mm,-303.52mm) and ++(71.36mm,214.09mm) .. ++(-172.86mm,-827.14mm)
;
\end{scope}
\end{tikzpicture}=
\begin{tikzpicture}[yscale=-1,scale=0.03,baseline={([yshift=-.5ex]current bounding box.center)}]
\begin{scope}[shift={(0.00mm,719.29mm)}]
% path id='path3355'
% path spec='m 548.64986,646.80324 0,136.83224 649.31084,0 0,-136.83224 z'
\draw [fill=none,draw=black] (548.65mm,646.80mm)
-- ++(0.00mm,136.83mm)
-- ++(649.31mm,0.00mm)
-- ++(0.00mm,-136.83mm)
-- cycle
;
% path id='path3357'
% path spec='M 791.4375,5.4588366 785.73135,646.66316'
\draw [fill=none,draw=black] (791.44mm,5.46mm)
-- (785.73mm,646.66mm)
;
% path id='path3359'
% path spec='M 891.40966,-15.250046 885.70488,646.66392'
\draw [fill=none,draw=black] (891.41mm,-15.25mm)
-- (885.70mm,646.66mm)
;
% path id='path3435'
% path spec='m 838.42661,781.00158 0,274.76152 0,0'
\draw [fill=none,draw=black] (838.43mm,781.00mm)
-- ++(0.00mm,274.76mm)
-- ++(0.00mm,0.00mm)
;
% path id='path3502'
% path spec='M 1031.4498,-45.555751 1025.747,646.66505'
\draw [fill=none,draw=black] (1031.45mm,-45.56mm)
-- (1025.75mm,646.67mm)
;
% path id='path3504'
% path spec='m 1131.4221,-67.274425 -5.7017,713.939865'
\draw [fill=none,draw=black] (1131.42mm,-67.27mm)
-- ++(-5.70mm,713.94mm)
;
% path id='path3506'
% path spec='m 571.42636,56.473346 -5.7096,590.188004'
\draw [fill=none,draw=black] (571.43mm,56.47mm)
-- ++(-5.71mm,590.19mm)
;
% path id='path3508'
% path spec='M 671.39826,29.198272 665.69053,646.66231'
\draw [fill=none,draw=black] (671.40mm,29.20mm)
-- (665.69mm,646.66mm)
;
% path id='path3510'
% path spec='m 743.47227,645.27076 c -0.47459,-53.8291 -26.52297,-106.0301 -57.57869,-135.3605'
\draw [fill=none,draw=black] (743.47mm,645.27mm)
.. controls ++(-0.47mm,-53.83mm) and ++(31.06mm,29.33mm) .. ++(-57.58mm,-135.36mm)
;
% path id='path3516'
% path spec='m 640.95666,466.92188 c -21.1113,-16.799 -37.82135,-31.24792 -52.03773,-43.88472'
\draw [fill=none,draw=black] (640.96mm,466.92mm)
.. controls ++(-21.11mm,-16.80mm) and ++(14.22mm,12.64mm) .. ++(-52.04mm,-43.88mm)
;
% path id='path3518'
% path spec='M 535.38085,377.58036 C 441.86614,316.40351 377.58854,349.22679 327.28943,278.58542'
\draw [fill=none,draw=black] (535.38mm,377.58mm)
%%%% Warning: check controls
.. controls (441.87mm,316.40mm) and (377.59mm,349.23mm) .. (327.29mm,278.59mm)
;
% path id='path3522'
% path spec='M 299.00515,219.99653 C 109.21156,-101.59411 67.542883,569.79008 404.06103,159.38737'
\draw [fill=none,draw=black] (299.01mm,220.00mm)
%%%% Warning: check controls
.. controls (109.21mm,-101.59mm) and (67.54mm,569.79mm) .. (404.06mm,159.39mm)
;
% path id='path3524'
% path spec='M 528.23223,63.459031 C 634.477,-42.504437 1768.4283,-95.899553 1157.4992,-424.59754'
\draw [fill=none,draw=black] (528.23mm,63.46mm)
%%%% Warning: check controls
.. controls (634.48mm,-42.50mm) and (1768.43mm,-95.90mm) .. (1157.50mm,-424.60mm)
;
% path id='path3526'
% path spec='m 905.74659,-69.30245 5.70614,-652.09175'
\draw [fill=none,draw=black] (905.75mm,-69.30mm)
-- ++(5.71mm,-652.09mm)
;
% path id='path3528'
% path spec='m 805.77456,-48.241933 5.70477,-673.153037'
\draw [fill=none,draw=black] (805.77mm,-48.24mm)
-- ++(5.70mm,-673.15mm)
;
% path id='path3530'
% path spec='m 665.7346,-17.421657 5.70279,-703.974463'
\draw [fill=none,draw=black] (665.73mm,-17.42mm)
-- ++(5.70mm,-703.97mm)
;
% path id='path3532'
% path spec='m 565.76243,4.665796 5.70169,-726.062316'
\draw [fill=none,draw=black] (565.76mm,4.67mm)
-- ++(5.70mm,-726.06mm)
;
% path id='path3534'
% path spec='m 1125.7574,-121.18317 5.7096,-600.20919'
\draw [fill=none,draw=black] (1125.76mm,-121.18mm)
-- ++(5.71mm,-600.21mm)
;
% path id='path3536'
% path spec='m 1025.7857,-93.444975 5.7077,-627.948355'
\draw [fill=none,draw=black] (1025.79mm,-93.44mm)
-- ++(5.71mm,-627.95mm)
;
% path id='path3540'
% path spec='m 1103.0866,-442.66354 c -19.7881,-14.40192 -52.5121,-36.15239 -58.5889,-40.4061'
\draw [fill=none,draw=black] (1103.09mm,-442.66mm)
.. controls ++(-19.79mm,-14.40mm) and ++(6.08mm,4.25mm) .. ++(-58.59mm,-40.41mm)
;
% path id='path3548'
% path spec='m 1008.1322,-505.293 c -45.44954,-56.80618 -52.54011,-155.27023 -50.50759,-204.05081'
\draw [fill=none,draw=black] (1008.13mm,-505.29mm)
.. controls ++(-45.45mm,-56.81mm) and ++(-2.03mm,48.78mm) .. ++(-50.51mm,-204.05mm)
;
% path id='path3550'
% path spec='m 404.05237,159.39861 c 38.46081,-38.63525 48.54078,-15.04364 125.23335,-97.03639'
\draw [fill=none,draw=black] (404.05mm,159.40mm)
.. controls ++(38.46mm,-38.64mm) and ++(-76.69mm,81.99mm) .. ++(125.23mm,-97.04mm)
;
\end{scope}
\end{tikzpicture}
\]

Let $M:=ord(T)$ for the ribbon fusion category $\C$. The rotation operator $\rho^{ \overline{a}^{l+m}\boxtimes \tilde{b}}_{n-(m+l), a}$ is diagonalizable with respect to some basis of eigenvectors $\{ b_i \}_{i= 1}^{M(n-(m+l)}$ ordered by the eigenvalue count established in the previous section. Therefore the operator $\theta_{a}\pi^{b}_{a}(A^{ n}_{l,m})$ is diagonalizable with basis $\{(V^{n}_{l, m})^{-1}(b_i)\}$. This implies:

\begin{cor}
\begin{enumerate}
    \item $\pi_{a}(A^{ n}_{l,m})$ is diagonalizable and $\pi_{a}(A^{ n}_{l,m})^{M(n-(m+l))}=1$.
    \item Given $\omega$ an $M(n-(m+l))^{th}$ root of unity, the complex number $\theta^{-1}_a \omega$ occurs as an eigenvalue of $\pi^{b}_{a,n} (A^{ n}_{l,m})$ with multiplicity $K^{\overline{a}^{l+m}\boxtimes \tilde{b}}_{n-(m+1), a}(\omega)$ 
\end{enumerate}
\end{cor}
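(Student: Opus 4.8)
The plan is to read everything off the conjugation identity
$(V^{n}_{l,m})^{-1}\,\rho^{\overline a^{l+m}\boxtimes\tilde b}_{n-(l+m),a}\,V^{n}_{l,m}=\theta_a\,\pi^{b}_{a}(A^{n}_{l,m})$
established just above: it identifies, up to the scalar $\theta_a$ and a change of basis, the operator $\pi^{b}_{a,n}(A^{n}_{l,m})$ with a single generalized rotation operator whose spectral theory is already completely described by Proposition \ref{thm:main_theorem} and the definition of $K^{\bullet}_{n,a}$ that follows it. Throughout I would write $c:=\overline a^{l+m}\boxtimes\tilde b\in Z(\ca)$ (identified with its image under $G$) and $N:=n-(l+m)$, so that the identity reads $\theta_a\,\pi^{b}_{a,n}(A^{n}_{l,m})\cong\rho^{c}_{N,a}$ as operators on $\Hom_\ca(b,a^{\otimes n})\cong\Hom_\ca(\operatorname{Forget}(c),a^{\otimes N})$.

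For part (1), I would start from the two immediate properties of generalized rotations: $\rho^{c}_{N,a}$ is diagonalizable, and $(\rho^{c}_{N,a})^{N}=\theta_c^{-1}\,\mathrm{id}$. Next I would verify that the eigenvalues of the twist $\theta_c$ are $M$-th roots of unity: each simple summand of $c$ has the form $x\boxtimes\tilde y$ with $x,y\in\Irr(\ca)$ and twist $\theta_x\theta_y^{-1}$, and since $M=\operatorname{ord}(T)$ is the conductor of $\ca$ each of $\theta_x,\theta_y$ is an $M$-th root of unity; as $G$ is a ribbon functor it carries this twist to $\theta_c$. Hence the eigenvalues of $\rho^{c}_{N,a}$ are $MN$-th roots of unity and $(\rho^{c}_{N,a})^{MN}=\mathrm{id}$. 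Conjugating back through $V^{n}_{l,m}$ and dividing by $\theta_a$ transports diagonalizability and the relation $\pi^{b}_{a,n}(A^{n}_{l,m})^{M(n-(l+m))}=1$ to every $b\in\Irr(\ca)$. Finally, under the semisimple block decomposition $\End_\ca(a^{\otimes n})\cong\bigoplus_{b\in\Irr(\ca)}\End\!\big(\Hom_\ca(b,a^{\otimes n})\big)$, the element $\pi_{a,n}(A^{n}_{l,m})$ is precisely the tuple $\big(\pi^{b}_{a,n}(A^{n}_{l,m})\big)_{b}$, so it is diagonalizable with $(MN)$-th power the identity; since this representative already lives at level $n$, the same holds for $\pi_a(A^{n}_{l,m})$ in the colimit.

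For part (2), the conjugation identity shows that a complex number $\mu$ is an eigenvalue of $\pi^{b}_{a,n}(A^{n}_{l,m})$ with multiplicity $k$ exactly when $\theta_a\mu$ is an eigenvalue of $\rho^{c}_{N,a}$ with multiplicity $k$; taking $\mu=\theta_a^{-1}\omega$ for $\omega$ an $MN$-th root of unity, the multiplicity in question equals the multiplicity of $\omega$ in $\rho^{c}_{N,a}$, which by Proposition \ref{thm:main_theorem} and the definition of $K^{c}_{N,a}(\omega)$ is $K^{\overline a^{l+m}\boxtimes\tilde b}_{N,a}(\omega)$ (the Kronecker-delta factors built into $K$ automatically assign multiplicity $0$ to the roots $\omega$ that cannot occur). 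The only step that genuinely needs care — and the one I would treat as the main obstacle — is the claim that the twist eigenvalues of $c$ are $M$-th roots of unity for the conductor $M$ of $\ca$ itself rather than for the a priori larger conductor of $Z(\ca)$; this is what makes the exponent $M(n-(l+m))$ in the statement correct, and it rests on the explicit formula $\theta_{x\boxtimes\tilde y}=\theta_x/\theta_y$ for twists in $\ca\boxtimes\tilde\ca$ together with the fact that $G$ preserves the ribbon structure.
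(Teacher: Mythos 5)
Your argument is correct and is essentially the paper's own: the corollary is read off directly from the conjugation identity $(V^{n}_{l,m})^{-1}\rho^{\overline{a}^{l+m}\boxtimes\tilde{b}}_{n-(l+m),a}V^{n}_{l,m}=\theta_{a}\pi^{b}_{a}(A^{n}_{l,m})$ together with the diagonalizability of the rotation operator and the multiplicity formulas of Proposition \ref{thm:main_theorem} and the definition of $K$. Your extra check that the twists of the simple summands $x\boxtimes\tilde{b}$ are $M$-th roots of unity (via $\theta_{x\boxtimes\tilde{y}}=\theta_{x}/\theta_{y}$) is a detail the paper leaves implicit when it writes the eigenvalue count $M(n-(m+l))$, but it does not change the route.
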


Thus the computation of the eigenvalues for $A^{ n}_{l,m}$ reduces to computing the eigenvalues of appropriate rotation operators, which we know how to do in terms of Ng-Schauenburg indicators.  If $\ca$ happens to be modular, we can compute these eigenvalues in terms of the modular data $S$ and $T$, as we demonstrate explicitly in the next section. 

Now we suppose $\ca$ is modular with modular data $S,T$, with $ord(T)=M$. In this case, $G:\ca\boxtimes \tilde{\ca}\rightarrow Z(\ca)$ is an equivalence.  Again, assume $a$ is fixed so $K^{b}_{n,\omega}$ refers to the multiplicity of $\omega$ in $\rho^{b}_{n,a}$, where $\omega$ is a some $nM^{th}$ root of unity, and $b\in Z(\ca)$. Specializing the formulae from above to $n=2$, we see that for $\omega$ a $2M^{th}$ root of unity we have
\[
K^{c\boxtimes \tilde{b}}_{2,\omega}=\frac{\delta(\omega^{2},\frac{\theta_{b}}{\theta_{c}})}{2}\left( \omega^{-1}\nu^{c\boxtimes \tilde{b}}_{2,1}(a)+N^{b}_{\overline{c},a,a}\right).
\]
Recall from section 2 that $S_{c\boxtimes b, d\boxtimes e}= S_{c,d}S_{\overline{b},e}$, and $\theta_{a\boxtimes \tilde{b}}=\frac{\theta_a}{\theta_b}$, so we have the following formula, which was first derived in \cite[Proposition 6.1]{NS2}:
\[
\nu^{c\boxtimes \tilde{b}}_{2,1}(a)=\sum_{d, e\in \text{Irr}(\ca)} \left(\frac{\theta_{d}}{\theta_{e}} \right)^{2} S_{c,d}S_{\overline{b},e} N^{a}_{d,e} \quad .
\]
Combining these formulae gives
\begin{prop}\label{formula}  Let $\ca$ be a modular category with modular data $S, T$, and let $\omega$ be a $2M^{th}$ root of unity.   Then $\rho^{a\boxtimes  \tilde{b}}_{2,a}$ contains eigenvalue $\omega$ with multiplicity 
\[
K^{c\boxtimes  \tilde{b}}_{2,\omega}:=\delta_{\omega^{2}=\frac{\theta_{b}}{\theta_{c}}}\ \frac{1}{2}\left( \omega^{-1}\sum_{d, e\in \text{Irr}(\ca)} \left(\frac{\theta_{d}}{\theta_{e}} \right)^{2} S_{c,d}S_{\overline{b},e} N^{a}_{d,e}+N^{b}_{\overline{c},a,a}\right).
\]
In particular, $K^{c\boxtimes  \tilde{b}}_{2,\omega}$ is a non-negative integer, and $\omega\in \Spec(\rho^{c\boxtimes  \tilde{b}}_{2,a})$ if and only if $K^{c\boxtimes  \tilde{b}}_{2,\omega}>0$.
\end{prop}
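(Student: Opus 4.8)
The plan is to assemble Proposition~\ref{formula} by substituting the two closed formulae established in the paragraphs immediately preceding it into the $n=2$ specialization of $K^{b}_{n,a}(\omega)$, and then reading off the final assertions from Proposition~\ref{thm:main_theorem}. First I would record that, because $\ca$ is modular, M\"uger's equivalence identifies $Z(\ca)$ with $\ca\boxtimes\tilde{\ca}$, so $c\boxtimes\tilde{b}$ is a \emph{simple} object of $Z(\ca)$, with twist $\theta_{c\boxtimes\tilde{b}}=\theta_{c}/\theta_{b}$ and forgetful image $c\otimes b$. Since $M=\mathrm{ord}(T)$, the scalar $\theta_{c\boxtimes\tilde{b}}^{-1}=\theta_{b}/\theta_{c}$ is an $M$-th root of unity, so each of its square roots is a $2M$-th root of unity; this is why letting $\omega$ range over the $2M$-th roots of unity with $\omega^{2}=\theta_{b}/\theta_{c}$ catches every eigenvalue of $\rho^{c\boxtimes\tilde{b}}_{2,a}$, the remaining $2M$-th roots of unity contributing multiplicity $0$ by Proposition~\ref{thm:main_theorem}.

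Next I would specialize the definition of $K^{b}_{n,a}(\omega)$ to $b=c\boxtimes\tilde{b}$ and $n=2$: simplicity of $c\boxtimes\tilde{b}$ collapses the sum over $\Irr(Z(\ca))$ via $\dim\Hom_{Z(\ca)}(c',c\boxtimes\tilde{b})=\delta_{c',c\boxtimes\tilde{b}}$, leaving $K^{c\boxtimes\tilde{b}}_{2,\omega}=\delta(\omega^{2},\theta_{b}/\theta_{c})\,P^{c\boxtimes\tilde{b}}_{2,a}(\omega^{-1})$. Expanding $P^{c\boxtimes\tilde{b}}_{2,a}(x)=\tfrac12\big(\nu^{c\boxtimes\tilde{b}}_{2,0}(a)+\nu^{c\boxtimes\tilde{b}}_{2,1}(a)\,x\big)$ and using that $\nu^{c\boxtimes\tilde{b}}_{2,0}(a)=\dim\Hom_\ca(c\otimes b,a^{\otimes 2})=\dim\Hom_\ca(\overline{c}\otimes a\otimes a,b)=N^{b}_{\overline{c},a,a}$ (duality together with the symmetry $\dim\Hom(x,y)=\dim\Hom(y,x)$ in a semisimple category) gives the intermediate identity $K^{c\boxtimes\tilde{b}}_{2,\omega}=\tfrac12\,\delta(\omega^{2},\theta_{b}/\theta_{c})\big(\omega^{-1}\nu^{c\boxtimes\tilde{b}}_{2,1}(a)+N^{b}_{\overline{c},a,a}\big)$ already displayed above the statement.

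Then I would substitute the $S,T$-expression $\nu^{c\boxtimes\tilde{b}}_{2,1}(a)=\sum_{d,e\in\Irr(\ca)}(\theta_{d}/\theta_{e})^{2}S_{c,d}S_{\overline{b},e}N^{a}_{d,e}$, which follows from \cite[Proposition~6.1]{NS2} via the Section~2 identities $S_{c\boxtimes b,d\boxtimes e}=S_{c,d}S_{\overline{b},e}$ and $\theta_{a\boxtimes\tilde{b}}=\theta_{a}/\theta_{b}$, to obtain the closed form in the statement. For the last sentence: by Proposition~\ref{thm:main_theorem} the number $K^{c\boxtimes\tilde{b}}_{2,\omega}$ is literally the dimension of the $\omega$-eigenspace of $\rho^{c\boxtimes\tilde{b}}_{2,a}$, hence a non-negative integer, and $\omega\in\Spec(\rho^{c\boxtimes\tilde{b}}_{2,a})$ exactly when that dimension is positive.

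I do not expect a genuine obstacle here: the argument is bookkeeping built on results already in hand, as signalled by the ``Combining these formulae gives'' preceding the statement. The points needing care are (i) that modularity is genuinely invoked so that $c\boxtimes\tilde{b}$ is simple and the $K$-sum collapses, and (ii) matching conventions — that $2M$-th roots of unity form the correct ambient set, that the Kronecker condition $\omega^{2}=\theta_{b}/\theta_{c}$ is the same as the ``$n$-th root of $\theta^{-1}$'' condition of Proposition~\ref{thm:main_theorem}, and that the placement of bars and powers of $\theta$ in the $\nu^{c\boxtimes\tilde{b}}_{2,1}$ formula is consistent with the $\boxtimes$-decomposition of $S$ and $\theta$. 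That last transcription of conventions is the most error-prone step.
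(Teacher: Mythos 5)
Your proposal is correct and follows essentially the same route as the paper: specialize the multiplicity formula $K^{b}_{n,a}(\omega)$ of Proposition \ref{thm:main_theorem} to $n=2$ and the simple object $c\boxtimes\tilde{b}$ (using modularity so that $G$ is an equivalence and the sum over $\Irr(Z(\ca))$ collapses), identify the $k=0$ term with $N^{b}_{\overline{c},a,a}$ via the forgetful image $c\otimes b$, and then substitute the Ng--Schauenburg expression for $\nu^{c\boxtimes\tilde{b}}_{2,1}(a)$ together with the identities $S_{c\boxtimes b,d\boxtimes e}=S_{c,d}S_{\overline{b},e}$ and $\theta_{c\boxtimes\tilde{b}}=\theta_{c}/\theta_{b}$. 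Your bookkeeping of the $2M$-th roots of unity and the Kronecker condition matches the paper's conventions, so no gap remains.
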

\begin{rem*}
It was shown earlier by \cite[Theorem 8.3]{NS2} that the expression in parentheses in Proposition \ref{formula} is a non-negative integer.
\end{rem*}
\noindent It appears somewhat surprising that this number is a non-negative integer in general.  Using the Verlinde formula, we can compute the terms $N^{b}_{\overline{c},a,a}$ in the sum in terms of the $S$-matrix, and thus the above formula is entirely expressible in terms of the modular data.
In $B_{\infty}$, $\sigma_i$ is conjugate to $A_{0,0}^{2}$ and $\sigma_i \sigma_{i+1} \sigma_i$ is conjugate to $A_{1,0}^{3}$. This implies
\begin{enumerate}
\item $\Spec(\pi_{a}(\sigma_i))=\{\theta^{-1}_{a}\omega: K^{\mathbb{1}\boxtimes \tilde{ b}}_{2,\omega}\ne 0\ \text{for some}\ b\in \ca\}$.  
\item $\Spec(\pi_{a}(\sigma_{i}\sigma_{i+1}\sigma_{i}))=\{\theta^{-1}_{a}\omega : K^{\overline{a}\ \boxtimes\   \tilde{b}}_{2,\omega}\ne 0\ \text{for some}\ b\in \ca\}$.
\end{enumerate}

\begin{rem} Note that this allows us to compute the link invariants for the closures of powers of braids $A^{n}_{m,l}$ in terms of generalized FS-indicators, hence also in terms of the modular data when $\ca$ is modular.  For example, for the closure of the braid $(A^{ p}_{0,0})^{q}$ is the $(p,q)$ torus link.  And thus above observations given a formula for this invariant in terms of the dimensions, twists, and $\nu^{b}_{p,q}$.  This class of examples is directly computable from the GFS-indicators (without first determining the eigenvalues), but other examples will require more interesting interactions with the eigenvalues themselves.  It would be interesting to determine general classes of links whose invariants can be obtained this way, and precise formulae in terms of the modular data.
\end{rem}

\subsection{An example from the Haagerup fusion category}

As an example, we consider the quadratic fusion category $\mathcal{H}aag$ corresponding to the even part of the Haagerup subfactor.  This is the ``standard'' categorification of the fusion ring $H$ given in Section \ref{sec:fs-rigid}. Then the modular data for the center $Z(\mathcal{H}aag)$ is well known, and was first computed by Izumi in \cite{I2}.  We use here the form from \cite{EG}:

\tiny
$$S:=
\frac{1}{3}\left(
\begin{array}{cccccccccccc}
 x & 1-x & 1 & 1 & 1 & 1 & y & y & y & y & y & y \\
 1-x & x & 1 & 1 & 1 & 1 & -y & -y & -y & -y & -y & -y \\
 1 & 1 & 2 & -1 & -1 & -1 & 0 & 0 & 0 & 0 & 0 & 0 \\
 1 & 1 & -1 & 2 & -1 & -1 & 0 & 0 & 0 & 0 & 0 & 0 \\
 1 & 1 & -1 & -1 & -1 & 2 & 0 & 0 & 0 & 0 & 0 & 0 \\
 1 & 1 & -1 & -1 & 2 & -1 & 0 & 0 & 0 & 0 & 0 & 0 \\
 y & -y & 0 & 0 & 0 & 0 & c(1) & c(2) & c(3) & c(4) & c(5) & c(6) \\
 y & -y & 0 & 0 & 0 & 0 & c(2) & c(4) & c(6) & c(5) & c(3) & c(1) \\
 y & -y & 0 & 0 & 0 & 0 & c(3) & c(6) & c(4) & c(1) & c(2) & c(5) \\
 y & -y & 0 & 0 & 0 & 0 & c(4) & c(5) & c(1) & c(3) & c(6) & c(2)\\
 y & -y & 0 & 0 & 0 & 0 & c(5) & c(3) & c(2) & c(6) & c(1) & c(4) \\
 y & -y & 0 & 0 & 0 & 0 & c(6) & c(1) & c(5) & c(2) & c(4) & c(3)\\
\end{array}
\right)
$$

\bigskip

$$T:=
\left(
\begin{array}{cccccccccccc}
 1 & 0 & 0 & 0 & 0 & 0 & 0 & 0 & 0 & 0 & 0 & 0 \\
 0 & 1 & 0 & 0 & 0 & 0 & 0 & 0 & 0 & 0 & 0 & 0 \\
 0 & 0 & 1 & 0 & 0 & 0 & 0 & 0 & 0 & 0 & 0 & 0 \\
 0 & 0 & 0 & 1 & 0 & 0 & 0 & 0 & 0 & 0 & 0 & 0 \\
 0 & 0 & 0 & 0 & e^{\frac{2 i \pi }{3}} & 0 & 0 & 0 & 0 & 0 & 0 & 0 \\
 0 & 0 & 0 & 0 & 0 & e^{-\frac{2 i \pi }{3}} & 0 & 0 & 0 & 0 & 0 & 0 \\
 0 & 0 & 0 & 0 & 0 & 0 & e^{\frac{12 i \pi }{13}} & 0 & 0 & 0 & 0 & 0 \\
 0 & 0 & 0 & 0 & 0 & 0 & 0 & e^{-\frac{4 i \pi }{13}} & 0 & 0 & 0 & 0 \\
 0 & 0 & 0 & 0 & 0 & 0 & 0 & 0 & e^{\frac{4 i \pi }{13}} & 0 & 0 & 0 \\
 0 & 0 & 0 & 0 & 0 & 0 & 0 & 0 & 0 & e^{\frac{10 i \pi }{13}} & 0 & 0 \\
 0 & 0 & 0 & 0 & 0 & 0 & 0 & 0 & 0 & 0 & e^{-\frac{12 i \pi }{13}} & 0 \\
 0 & 0 & 0 & 0 & 0 & 0 & 0 & 0 & 0 & 0 & 0 & e^{-\frac{10 i \pi }{13}} \\
\end{array}
\right)
$$
\normalsize

\bigskip

\noindent where $x=\frac{13-3\sqrt{13}}{26},\ y=\frac{3}{\sqrt{13}}$, and $ c(j)=-2y\cos\left(\frac{2\pi j}{13}\right)$

The simple objects $\Irr(Z(\mathcal{H}aag))=\{x_i\; | \; i=1,\ldots, 12\}$ are indexed left to right along the columns of the $T$-matrix. As a demonstration of our formula, we consider the $6^{th}$ simple object $x_{6}$.  We chose this object because it has multiplicity 2 objects in the decomposition of its tensor square, but our formula is easy to compute for any object. See that:
\[
x_6 \otimes x_6 = x_1 \oplus 2x_2 \oplus x_3 \oplus x_4 \oplus x_5 \oplus 2x_6 \oplus x_7 \oplus x_8 \oplus x_9 \oplus x_{10} \oplus x_{11} \oplus x_{12}
\]
Then $\theta_{6}= e^{-\frac{2 i \pi }{3}}$, and the two entries in the ``possible eigenvalues column'' below are $\omega_{i}\theta^{-1}_{6}$, where $\omega_{i}=\pm \sqrt{\theta_{i}}$, and these are the possible eigenvalues of the braid acting on $\Hom_{\mathcal{H}aag}(x_{i}, x_{6}\otimes x_{6})$.  The final column indicates the multiplicity as computed by our formula.

$$
\begin{array}{|c|c|c|}
\hline
\textbf{object} & \textbf{possible eigenvalues} & \textbf{multiplicities}\\
\hline
1 & (-e^{\frac{2\pi i}{3}}, e^{\frac{2\pi i}{3}}) & (0,1)\\
\hline
2&(-e^{\frac{2\pi i}{3}}, e^{\frac{2\pi i}{3}}) & (1,1)\\
\hline
3&(-e^{\frac{2\pi i}{3}}, e^{\frac{2\pi i}{3}})&(0,1)\\
\hline
4&(-e^{\frac{2\pi i}{3}}, e^{\frac{2\pi i}{3}})&(0,1)\\
\hline
5&(1,-1)&(1,0)\\
\hline
6&(-e^{\frac{\pi i}{3}}, e^{\frac{\pi i}{3}}) &(2,0)\\
\hline
7&(e^{\frac{5\pi i}{39}}, -e^{\frac{5\pi i}{39}})&(1,0)\\
\hline
8&(-e^{\frac{20\pi i}{39}}, -e^{\frac{20\pi i}{39}})&(1,0)\\
\hline
9 & (-e^{\frac{32\pi i}{39}}, e^{\frac{32\pi i}{39}})&(1,0)\\
\hline
10&(e^{\frac{2\pi i}{39}}, -e^{\frac{2\pi i}{39}})&(0,1)\\
\hline
11&(-e^{\frac{8\pi i}{39}}, e^{\frac{8\pi i}{39}})&(1,0)\\
\hline
12&(-e^{\frac{11\pi i}{39}}, e^{\frac{11\pi i}{39}})&(0,1)\\
\hline
\end{array}
$$

%In this section, we apply our formulae to compute the eigenvalues of braids for the Drinfel'd centers of $\frac{1}{2} E_{6}$, the Haagerup category $\mathcal{H}$, and the even part of the $2D2$ subfactor.  In fact, bundled with the source code is a mathematica notebook that computes the braid spectrum and the braid relation spectrum for the categories listed below also the Asaeda-Haagerup category.  The matrices invloved are quite large, so we did not include them in this section.

%\input{paragraphs/paragraph-31.tex}

%\input{paragraphs/paragraph-32.tex}

%\input{paragraphs/paragraph-33.tex}

%\input{paragraphs/paragraph-34.tex}

\bibliographystyle{alpha}
\bibliography{bibliography}

\end{document}